\theoremstyle{remark} 
\newtheorem{thm}[equation]{Theorem}
\newtheorem*{thm*}{Theorem}
\newtheorem{prop}[equation]{Proposition}
\newtheorem*{prop*}{Proposition}
\newtheorem{lem}[equation]{Lemma}
\newtheorem*{lem*}{Lemma}
\newtheorem{cor}[equation]{Corollary}
\newtheorem*{cor*}{Corollary}
\newtheorem*{conj*}{Conjecture}
\newtheorem*{fact*}{Fact}
\theoremstyle{remark} 
\newtheorem{defn}[equation]{Definition}
\newtheorem*{defn*}{Definition}
\newtheorem{ex}[equation]{Example}
\newtheorem*{ex*}{Example}
\newtheorem*{qstn*}{Question}
\newtheorem*{rem}{Remark}
\newenvironment{amatrix}[1]{%
  \left(\begin{array}{@{}*{#1}{c}|c@{}}
}{%
  \end{array}\right)
}
\newcommand{\ra}{\rightarrow}
\newcommand{\ora}{\overrightarrow}
\newcommand{\ol}{\overline}
\newcommand{\gf}{\mathfrak{g}}
\newcommand{\cx}{{\mathbb{C}}}
\newcommand{\rl}{{\mathbb{R}}}
\newcommand{\N}{\mathbb{N}}
\newcommand{\ac}{\mathcal A}
\newcommand{\dc}{\mathcal D}
\newcommand{\ci}{{\mathcal C}^\infty}
\author[Isaac Matthew Cinzori]{Isaac M. Cinzori}
\title{The Structure of the Internal Tangent Space to a Point of the Orbit Space of a Manifold under a Proper Lie Group Action}
\begin{document}

\maketitlepage

\copyr

\begin{dedication}
 To my loving wife, Maria.\par
 And my adored parents and siblings: Aaron, Clare, Daniel, and Gwendolyn.
\end{dedication}

\begin{acknowledgements}
 I'd like to extend my heartfelt thanks to my committee members. First, to Meera Mainkar, who devoted time to my work in spite of her multiple duties as chair and graduate coordinator during various periods of my research. Her diligence in dedicating time to me and my work is deeply appreciated.

 Second, to Lisa DeMeyer, whose care in learning the notions of diffeology in order to serve on my committee and subsequent careful reading of my work led to many improvements, especially in the initial exposition. I am also grateful for the detailed, and timeless, writing advice she provided me, even if I have not yet perfected its implementation.

 Thanks are also due, of course, to my committee chair and advisor Jordan Watts. Jordan devoted a large portion of two years to me, and his continued investment in my development, work, and writing will stand for me as a model of generosity. I'm particularly grateful for the egalitarian spirit of his advising, which allowed me to grow as a mathematician while also spreading my wings in my own way (hopefully without ruffling his feathers too much).
\end{acknowledgements}

\begin{abstract}
A diffeological space is a set equipped with a smooth structure, known as a diffeology, which allows us to extend certain notions from manifolds to these more general spaces. We study a generalized notion of tangent space to a point of a manifold, namely the internal tangent space to a point of a diffeological space. In particular, we study these internal tangent spaces when the diffeological space in question is the orbit space of a manifold acted upon by a proper Lie group action. We provide a useful description for an arbitrary internal tangent space to a point of such an orbit space and then, in the culmination of our work, show that the internal tangent space to a point of an orbit space, viewed as a diffeological space, is isomorphic to the stratified tangent space to the same point, when the orbit space is viewed as a stratified space with the well-known orbit type stratification.
\end{abstract}


\startcontent

\chapter[Introduction]{\textbf{The Structure of the Internal Tangent Space to a Point of the Orbit Space of a Manifold under a Proper Lie Group Action}}\label{sec_intro}

A diffeological space is a set equipped with a smooth structure, known as a diffeology (see Definition \ref{diff def}), which usefully generalizes certain notions of traditional smooth structure on a manifold. All manifolds are diffeological spaces, with a standard and well-understood diffeology, but there are many important diffeological spaces which are not manifolds, including the orbit spaces of manifolds acted upon by Lie group actions (when the action is not free) as well as more exotic but still important examples such as the irrational torus (see \cite{diff}, exercise $31$, p. $31$).

In this paper, we focus on one notion which can be generalized from manifolds to spaces with a diffeological smooth structure: that of the tangent space to a point. On diffeological spaces, there are several notions of tangent space which extend the usual concept. In this work, we employ the notion of internal tangent space to a point, a (possibly infinite-dimensional) vector space first introduced by Hector in \cite{hector} and subsequently expanded in the works \cite{hector_follow_1} and \cite{hector_follow_2}. However, we will draw our exposition from \cite{tan_space_and_bundle}, which corrects and expands upon the earlier work. The internal tangent space to a point is given, in our work, in Definition \ref{def 3.1}.

There are many expanded notions of smooth structure which apply even when the space in question is not a manifold (see \cite{stacey} for an exposition of some of them). The idea of diffeology first formally appeared in the works of Souriau, including \cite{souriau_1} and \cite{souriau_2}, and relates to earlier work by Chen, especially \cite{chen_1} and \cite{chen_2}. The primary reference for diffeology is \cite{diff}.

In this paper, we restrict our attention to the orbit spaces of manifolds which are acted upon by proper Lie group actions. That is, we study the structure of the internal tangent space to a point of such an orbit space. These spaces are diffeological spaces with a well-understood quotient diffeology, which arises from the standard diffeology on the associated manifold.

The structure of this paper is as follows. In chapter \ref{sec_prereqs}, we carefully present the necessary background material on diffeological spaces and the internal tangent space to a point of a diffeological space, and present examples where we determine the internal tangent spaces of several orbit spaces explicitly. The results from this chapter are drawn from \cite{tan_space_and_bundle}, but the examples--although well-known--are presented in detail here.

After this, in chapter \ref{sec_lie_theory} we present the necessary theory of Lie groups and proper Lie group actions which we will employ in the remainder of the work, as well as fixing the necessary notation. This material is drawn primarily from \cite{dk_lie} and the results are well-known (although Proposition \ref{slice to isotropy} is usually not laid out explicitly).

Chapter \ref{sec_its_structure} shows that the internal tangent space to an arbitrary point of the orbit space of a manifold under a proper Lie group action is isomorphic to another internal tangent space with more structure. The primary results, Theorem \ref{main result} and Corollary \ref{main result cor}, are known to specialists in the field, but to the author's knowledge they have not yet been written down in full anywhere in the literature. We also present in detail the example of $\rl^3$ acted upon by the rotation group $SO(3)$ to show the utility of our findings.

In chapter \ref{sec_strat_stuff}, the culmination of our work, we show that the internal tangent space to a point of an orbit space acted upon by a proper Lie group action, with the orbit space viewed as a diffeological space, is isomorphic to the stratified tangent space to the same point in the orbit space, with the orbit space now viewed as a stratified space. Stratified spaces are, roughly speaking, spaces which can be partitioned into ``pieces" called strata, each of which is a manifold (hence each point has a natural associated tangent space). After briefly introducing these spaces and the natural ``orbit-type stratification" possessed by all orbit spaces of manifolds acted upon by proper Lie group actions, detailed in \cite{dk_lie} and \cite{strat}, we turn to our findings. The culminating result is Theorem \ref{ITS to STS}, which itself is due primarily to Lemmas \ref{ITS to STS lem 1} and \ref{ITS to STS lem 2} and a function, which we call the average map, presented in \cite{dk_lie} and described here in Proposition \ref{avg}. Theorem \ref{ITS to STS} is novel to our work in its entirety. We close with a reprise of our earlier examples of internal tangent spaces, now considering the stratified tangent space associated to the same point of the given orbit space.

The results given here, especially the culminating Theorem \ref{ITS to STS}, are significant because they give a greater understanding of the internal tangent space to a point in the context of the orbit spaces we are studying. In addition to the practical tool for determining the internal tangent spaces of certain orbit spaces more easily provided by Corollary \ref{main result cor}, this work may have a place in studies of diffeological analogues of other manifold notions which depend on the notion of internal tangent space, at least when the spaces studied are our relevant orbit spaces (which include, for instance, orbifolds). As examples, consider the works of \cite{applications_3} and especially \cite{applications_2} on immersions in the diffeological sense.

Further, our work can help shed light on the internal tangent bundle, analogous to the conventional manifold tangent bundle, which can be formed from the internal tangent spaces to each point of a diffeological space. It is studied in \cite{tan_space_and_bundle} and \cite{applications_1}. The internal tangent bundle itself is an example of what is termed a diffeological vector pseudo-bundle, and so enhanced knowledge of the internal tangent bundle can expand our collective understanding of these spaces as well (which are studied in, specifically, \cite{tan_space_and_bundle}, \cite{applications_1}, \cite{dvs_applications_1}, \cite{dvs_applications_2}, and \cite{dvs_applications_3}).

Lastly, we again highlight the importance of Theorem \ref{ITS to STS}. Its utility is in linking the newer concept of internal tangent space to a point to the well-understood notion of stratified tangent space to a point. In particular, the orbit spaces we study admit a well-known orbit-type stratification. Further, the stratified tangent spaces associated to a stratified space can form a stratified tangent bundle, which is also well-understood in the context of orbit-type stratified orbit spaces. Our result linking the internal and stratified tangent spaces to a point may therefore also be useful in linking the internal tangent bundle associated to an orbit space to its stratified tangent bundle, although care is required as traditionally an alternative smooth structure, known as a Sikorski smooth structure (first introduced in \cite{sikorski_1} and \cite{sikorski_2}), is placed on stratified spaces and their associated tangent bundles.

Exposition on the orbit-type stratification of the orbit space associated to a manifold acted upon by a proper Lie group action, as well as the resulting Sikorski smooth structure which can be assigned, can be found in \cite{strat}, and further relevant references include \cite{orbit_strat_1}, \cite{orbit_strat_2}, \cite{orbit_strat_3}, \cite{orbit_strat_4}, and \cite{orbit_strat_5}. While beyond the scope of this work, we believe that Theorem \ref{ITS to STS} will be useful in further investigations of how the internal tangent bundle associated to an orbit space relates to its stratified tangent bundle.

In this paper, all manifolds are assumed to be Hausdorff, second-countable, smooth, and without boundary, all vector spaces are assumed to be over $\rl$, and all linear maps $\rl$-linear, unless explicitly stated otherwise. The term ``diffeomorphism" refers to a conventional diffeomorphism in the manifold sense when written in chapter \ref{sec_lie_theory}, but to a diffeological diffeomorphism (see Definition \ref{diff diffeomorphism}) elsewhere, again unless mentioned otherwise.

\nocite{category_theory}
\nocite{lee_top}

\chapter[Diffeological Spaces and the Internal Tangent Space]{\textbf{Chapter II. Diffeological Spaces and the Internal Tangent Space}}\label{sec_prereqs}

We lay out here the diffeological notions used in this work. Our primary references will be \cite{tan_space_and_bundle} and \cite{diff}, but we will also reference \cite{d_top}, \cite{hector}, and both \cite{watts_paper} and \cite{watts_thesis}.

\begin{defn}[\cite{diff} Definition $1.5$]\label{diff def}
    A \textit{parametrization} of a set $X$ is any map $p:U \ra X$ where $U\subseteq \rl^n$ (for some $n\in \{0\}\cup\N$) is open in the standard topology on $\rl^n$. A \textit{diffeological space} $X$ is a nonempty set along with a specified set of paramatrizations of $X$, denoted $\dc_X$, which satisfy three conditions:
    \begin{enumerate}
        \item (covering) Every constant parametrization of $X$ of the form $p : \rl^n \ra X$ with $p(\rl^n)=\{x\}$ lies in $\dc_X$, for every $n\in \{0\}\cup\N$ and every $x\in X$.
        \item (smooth compatibility) For every parametrization $p: U \ra X$ of $\dc_X$, and every open subset $V$ of $\rl^m$ (for $m\in \{0\}\cup\N$), and every smooth map $F : V \ra U$, it is the case that $p\circ F\in \dc_X$.
        \item (locality) If $p:U \ra X$ is a parametrization of $X$ such that for every $u\in U$ there is an open neighborhood $V\subseteq U$ of $u$ for which the restricted map $p|_V$ lies in $\dc_X$, then $p\in \dc_X$.
    \end{enumerate}
    
    The collection $\dc_X$ is called a \textit{diffeology} on $X$, and the parametrizations in the diffeology are called \textit{plots} of the diffeological space $X$.
\end{defn}

\begin{defn}[\cite{diff} Definition $1.14$]\label{diff diffeomorphism}
    A function $f : X\ra Y$ between diffeological spaces is called \textit{smooth} if whenever $p : U\ra X$ is a plot of $X$, then $f \circ p$ is a plot of $Y$. If a given smooth function has an inverse that is also smooth, it is called a \textit{diffeomorphism}.
\end{defn}


Given two diffeological spaces $X$ and $Y$, the set of smooth maps between them is denoted $\ci(X,Y)$.

\begin{ex}
    Every smooth manifold $M$ can be viewed as a diffeological space with the diffeology consisting of all smooth maps of manifolds of the form $p:U\ra M$ (where again $U\subseteq \rl^n$, $n\in \{0\} \cup \N$, is an open subset in the standard topology). The first requirement of Definition \ref{diff def} is satisfied because all constant maps between manifolds are smooth, the second because compositions of smooth maps between manifolds remain smooth, and the third because smoothness of maps between manifolds is a local concept. We call this collection of plots the \textit{standard diffeology} on the manifold $M$. In this case, the diffeologically smooth maps (both from $M$ to $M$ and from $M$ to another manifold $N$) are precisely those which are smooth in the usual sense (it is helpful to use Boman's Theorem here; see \cite{boman}). We assume this diffeology is the one given to all manifolds, unless noted.
\end{ex}

We term the smallest diffeology on a set $X$ which contans a set of parametrizations $\ac = \{ p_i:U_i \ra X \}_{i\in I}$ the diffeology \textit{generated} by the collection $\ac$. This diffeology is comprised of precisely the parametrizations $p:U \ra X$ that either locally factor through the given functions via smooth maps or are locally constant (only locally, due to conditions $2$ and $3$ in Definition \ref{diff def}).

\begin{ex}\label{ex wire}
    Fix $n\in \N$. The space $X=\rl^n$ is a manifold, hence it can be viewed as a diffeological space equipped with its standard diffeology of smooth maps $p:U\ra \rl^n$ (for $U$ open in $\rl^m$, $m\in \{0\} \cup \N$). However, there is another diffeology that can be placed on $X$. The \textit{wire diffeology} on $X=\rl^n$ is the diffeology generated by the set of \textit{smooth} parametrizations of $\rl^n$ with domain $\rl$; that is, by the set $\{p:\rl \ra \rl^n | \text{ $p$ is a smooth parametrization of $\rl^n$}\}$. It consists of all parametrizations of $\rl^n$ which locally factor through smooth one-dimensional parametrizations. $X$ is not diffeomorphic to $\rl^n$ with the standard diffeology when $n\ge2$, as not all plots in the standard diffeology locally factor through a smooth curve in these cases. For instance, the identity map $id:\rl^n \ra \rl^n$ is not a plot unless $n=1$, as this map is not constant and it does not factor even locally as $id=p\circ F$ (for $p : U\subseteq \rl \ra \rl^n$ a plot in the wire diffeology and $F : V \subseteq \rl^m \ra U$ a smooth map, with $U$ and $V$ open sets) when $n\ge 2$.
\end{ex}

\begin{ex}
    Any nonempty set $X$ admits at least two (usually distinct) diffeologies:  the \textit{indiscrete diffeology} consisting of all parametrizations $p:U\ra X$ of $X$ and the \textit{discrete diffeology} consisting of only the parametrizations of $X$ which are locally constant. The first of these immediately satisfies the conditions required to be a diffeology given in Definition \ref{diff def}; the second is the diffeology on the space $X$ generated by the constant parametrizations of $X$.
\end{ex}

\begin{rem}
    The usage of ``indiscrete" and ``discrete" in the naming of diffeologies is chosen so that the $D$-topology on a nonempty set $X$ (introduced in Definition \ref{d-top def}) is the indiscrete topology in the usual sense when $X$ is equipped with its indiscrete diffeology and the discrete topology in the usual sense when $X$ is equipped with its discrete diffeology.
\end{rem}

Suppose there is an equivalence relation $\sim$ on a given diffeological space $X$. The smallest diffeology on the quotient set $X/\!\sim$ making the quotient map $\pi : X \twoheadrightarrow X/\!\sim$ smooth is called the \textit{quotient diffeology} of $X/\!\sim$, which makes it into a diffeological space. $\dc_{X/\sim}$ consists of all parametrizations $\ol{p}:U \ra X/\!\sim$ that locally factor through the quotient map $\pi$. That is, locally $\ol{p}=\pi \circ p$ for $p\in \dc_X$.

\begin{ex}
    The orthogonal group of dimension $0$, $O(1)$, acts on $\rl$ via the mapping $\pm1 \cdot x = \pm x$. The orbit space $\rl / O(1)$ is a diffeological space with quotient diffeology consisting of all paramterizations $\overline{p}:U_{\overline{p}} \ra \rl / O(1)$ which locally factor as $\overline{p} = \pi \circ p$, for $p:U_{\ol{p}} \ra \rl$ a smooth map in the usual sense (that is, $p$ lies in the standard diffeology on $\rl$; when the space being quotiented is a manifold, it is not a typo that $\ol{p}$ and $p$ share the same domain).

    Indeed, the requirement that $\pi$ be smooth means that all parametrizations of the form $\pi \circ p$ must be included in the quotient diffeology on $\rl / O(1)$, for $p$ any plot in the standard diffeology on $\rl$. It then follows that all parametrizations on $\rl / O(1)$ that only locally factor as such must lie in the quotient diffeology as well, due to the third condition in Definition \ref{diff def}.
\end{ex}

Suppose we have a subset $A$ of a diffeological space $Y$. The largest diffeology on $A$ for which the inclusion map $\iota : A \hookrightarrow Y$ is smooth is called the \textit{subset diffeology} of $A$. By definition, it consists of precisely the parametrizations $p:U \ra A$ such that $\iota \circ p$ is a plot in the diffeology $\dc_Y$ on $Y$.

\begin{ex}
    The subset $[0,\infty)$ of $\rl$ has a subset diffeology consisting of all the parametrizations $p:U \ra [0,\infty)$ such that $\iota \circ p$ is a plot of the standard diffeology on $\rl$.

    Indeed, all constant mappings into $[0,\infty)$ lie in the standard diffeology on $\rl$ after composing with $\iota : [0,\infty)\ra \rl$. Further, given a smooth map $F : V \ra U$ between Euclidean open subsets and a parametrization $p : U \ra [0,\infty)$ such that $\iota \circ p$ is a smooth map, $\iota \circ p \circ F$ remains smooth. Lastly, if we have a parametrization $p : U \ra [0,\infty)$ such that locally $\iota \circ p : U \ra \rl$ is a smooth map, then because smoothness of maps between manifolds is local it follows that $\iota \circ p$ is smooth globally. All three conditions in Definition \ref{diff def} are satisfied.
\end{ex}

\begin{rem}
    Note that $\rl / O(1)$ with the quotient diffeology is not diffeomorphic to $[0,\infty)$ with the subset diffeology. See \cite{H1HSub} for a plot only in the latter diffeology.
\end{rem}

All diffeological spaces have a natural topology.

\begin{defn}\label{d-top def}
    If $X$ is a diffeological space with diffeology $\dc_X$, then a subset $A$ of $X$ is open in the \textit{$D$-topology} of $X$ if and only if $p^{-1}(A)$ is open for each $p\in \dc_X$ (the domain of each plot is equipped with the standard topology). An open set in this topology will be referred to as \textit{$D$-open}, when clarity is needed.
\end{defn}

Because inverse images commute with unions and intersections, and the domains of all plot maps are open sets, it follows that the $D$-topology is a topology.

When $X$ and $Y$ are two diffeological spaces equipped with their respective $D$-topologies, any smooth map $f:X\ra Y$ is also continuous. To see this, note that if $U_Y$ is a $D$-open set in $Y$, then for every $p_Y:V \ra Y$ in $\dc_Y$, $p_Y^{-1}(U_Y)$ is open. Consider now $f^{-1}(U_Y)$. Because for any plot $p_X : U \ra X$ in $\dc_X$, $f\circ p$ is a plot in $\dc_Y$, it follows that $p_X^{-1}(f^{-1}(U_Y))=(f\circ p_X)^{-1}(U_Y)$ is open in $U$, hence $f^{-1}(U_Y)$ is $D$-open in $X$, meaning that $f$ is continuous.

\begin{ex}
    The $D$-topology on a smooth manifold equipped with the standard diffeology coincides with the usual topology. Indeed, if a set $U$ of an $n$-dimensional manifold $M$ is $D$-open, then the inverse image of $U$ under a chart $\varphi$ centered about any point $x\in U$ (which is a plot) is open in $\rl^n$ equipped with the standard topology. However, because all chart maps are diffeomorphisms from an open subset of the topological space $M$ to Euclidean space with the standard topology, it follows--after restricting as necessary--that each point $x\in U$ is contained in an open neighborhood lying in $U$, hence $U$ is an open set in the original topology on the manifold.
    
    Conversely, if $U'$ is an open set in the topology on $M$, then because any plot $p$ in $\dc_M$ (the standard diffeology) is, as a smooth map of manifolds, also continuous we have that $p^{-1}(U')$ is open. Thus $U'$ is $D$-open. Therefore, a set is open in the topology assigned to $M$ as a manifold if and only if it is $D$-open.
\end{ex}

Let $X$ be a diffeological space and $Y$ a quotient set of $X$. There are a priori two topologies relevant to this discussion which could be given to $Y$. First, the $D$-topology of the quotient diffeology on $Y$. Second, the quotient topology of the $D$-topology on $X$. However, these two topologies are actually the same (see \cite{d_top} $3.3$ or \cite{diff} $2.12$).

\begin{ex}\label{ex d-top}
    When we have a manifold $M$ and consider a quotient set of $M$, the $D$-topology on the quotient is just the usual quotient topology (since the $D$-topology on $M$ is in fact the standard topology). In particular, the $D$-topology on $\rl / O(1)$ is just the quotient topology arising from the standard topology on $\rl$ and induced by the quotient map $\pi : \rl \ra \rl / O(1)$.
\end{ex}

The internal tangent space to a point of a diffeological space is an extension of the notion of tangent space to a point of a manifold (as shown in Example \ref{ex man}). It can be defined as a colimit, but here shall be given concretely.

A \textit{plot pointing to $x$} of a diffeological space $X$ is a parametrization $p:U \ra X$ in $\dc_X$ with a connected domain which contains $0$ such that $p(0)=x$ (for some specified $x\in X$). These will be called \textit{pointed plots} when the $x$ being pointed to is clear, and are written $p : (U, 0) \rightarrow (X, x)$ when greater clarity is needed.
 
\begin{defn}[\cite{tan_space_and_bundle} Definition $3.1$ and \cite{hector}] \label{def 3.1}
    Let $x$ be an arbitrary point of a diffeological space $X$. The \textit{internal tangent space to a point x} in $X$, denoted $T_x(X)$, is the quotient vector space $F/R$, where $F$ is the sum
    $$F=\bigoplus_{p~ : ~ (U_p,0) \ra (X,x), ~ p\in \dc_X}T_0(U_p)$$
    \noindent indexed by the collection of pointed plots in $\dc_X$ and $R\subseteq F$ is the space of vectors spanned by those of the form $(p,v) - (q,g_*(v))$ for which there is a germ of smooth maps $g : (U_p,0) \ra (U_q, 0)$ such that for the pointed plots $p : (U_p, 0) \rightarrow (X, x)$ and $q : (U_q, 0) \rightarrow (X, x)$ we have $p=q\circ g$ as germs. That is,
    $$R=\text{Span}\{(p,v) - (q,g_*(v))\}$$
    \noindent for which the following commutative diagram holds locally:
    $$
    \xymatrix{
    (U_{p},0) \ar[rr]^{g} \ar[dr]_{p} & & (U_q,0) \ar[dl]^{q} \\
    & (X,x) &
    }
    $$
    Here, $(p, v)$ denotes $v\in T_0(U_p)$ (a tangent space in the usual sense) in the summand of $F$ indexed by $p$ and $(q,g_*(v))$ denotes $g_*(v)\in T_0(U_q)$ in the summand of $F$ indexed by $q$. An element of $R$ of the form $(p,v) - (q,g_*(v))$ will be termed a \textit{basic relation} with $U_p$ the \textit{domain of the relation}.

    When two elements $v_1$ and $v_2$ are related in $F/R$, we will write $v_1 \sim v_2$ as shorthand. This means $v_1-v_2 \in R$.
\end{defn}

\begin{rem}
    We sometimes omit the parentheses and index plots to simplify expressions. The element $(p,v)\in F/R$ is also written as $p_*(v)$ in the literature, though the plot $p$ may not have a differential in the usual sense.
\end{rem}

\begin{ex}\label{ex man}
Let $M$ be a manifold of dimension $n$. We determine the internal tangent space of an arbitrary point $x\in M$. It is known in this case that the internal tangent space aligns with the tangent space (which is isomorphic--in the usual sense--to $\rl^n$).

Let $x$ be an arbitrary point of the given manifold $M$. As laid out in Definition \ref{def 3.1}, $T_x(M)$ is the quotient vector space $F/R$, where $F=\oplus_pT_0(U_p)$ with sum indexed over the pointed plots $p:(U_p,0)\rightarrow (M,x)$ and $R$ is the span of the vectors of the form $(p,v) - (q,g_*(v))$ for which there is a germ of smooth maps $g : (U_p,0) \ra (U_q, 0)$ such that for the pointed plots $p : (U_p, 0) \rightarrow (X, x)$ and $q : (U_q, 0) \rightarrow (X, x)$ we have $p=q\circ g$ as germs.

Now, in particular, let $(U,\varphi)$ be a chart centered about $x$ (guaranteed as $M$ is a manifold) and let $p : U_p \ra M$ be any plot in the standard diffeology on $M$ mapping $0$ to $x$. In this instance, $p$ is a smooth map in the conventional sense. We have the following diagram:

$$
\xymatrix{
U_{p} \ar[rr]^{\varphi \circ p} \ar[dr]_{p} & & \varphi(U) \ar[dl]^{\varphi^{-1}} \\
& M &
}
$$

\noindent where $p=\varphi^{-1} \circ (\varphi \circ p)$ as germs about $0$, meaning $(p,v) \sim (\varphi^{-1}, (\varphi \circ p)_*(v))$ in $F/R$.

Now, suppose we have an arbitrary element $w$ of $F/R$. Then $w$ consists of an element of $F=\oplus_pT_0(U_p)$ with only finitely many nonzero components $v_i \in T_0(U_{p_i})$, modulo the relations in $R$. In fact, due to these relations it is the case that for each nonzero component $v_i$, we have $w \sim w + [-(p_i,v_i) + (\varphi^{-1}, (\varphi \circ p_i)_*(v_i) )]$.

Therefore, the above work shows that any element of $F/R$ is equivalent to an element whose only nontrivial summand term (if any) lies in $T_0(\varphi(U))$ and so we obtain a surjective homomorphism mapping $T_0(\varphi(U)) \twoheadrightarrow F/R$. Since $T_0(\varphi(U)) \cong \rl^n$, it only remains to show our mapping is injective. To do so, it is sufficient to show that the distinct elements in the summand $T_0(\varphi(U))$ of $F$ remain distinct in $F/R$.

Indeed, if two elements $v$ and $w$ of $F$ whose only nontrivial terms were contained in $T_0(\varphi(U))$ were equivalent (that is, $v\sim w$), then we would have $v-w\in R$ (though not necessarily a basic relation). That is,

\begin{align}
    (\varphi^{-1},v)-(\varphi^{-1},w)=(\varphi^{-1},v - w)=\sum_i c_i[(p_i,v_i)-(q_i,g^i_*v_i)]\label{man cancellation}
\end{align}

\noindent for $c_i\in \rl$, $p_i,q_i$ pointed plots mapping $0$ to $x$, each $g^i$ a germ of smooth maps such that $p_i = q_i \circ g^i$ as germs, and $v_i\in T_0(U_{p_i})$. Note the above equality also holds in $F$, prior to modding out by $R$, and we will think of it in this sense below.

Because the equality above holds in $F$, note that because the left-hand side consists solely of elements from the summand corresponding to the plot $\varphi^{-1}$, any elements on the right-hand side corresponding to other summands (i.e., plots $p_i$ or $q_i$ not equal to $\varphi^{-1}$) must cancel completely with solely other elements on the right-hand side or be zero. Now, expand the above equation to:

\begin{align*}
    \sum_i c_i[(p_i,v_i)-(q_i,g^i_*v_i)] &= \sum_i c_i[(p_i,v_i)-(\varphi^{-1},(\varphi \circ p_{i})_*v_i)] \\
    &+ \sum_ic_i[(\varphi^{-1},(\varphi \circ p_{i})_*v_i)-(\varphi^{-1},(\varphi \circ q_{i})_*g^i_*v_i)] \\
    &+\sum_ic_i[(\varphi^{-1},(\varphi \circ q_{i})_*g^i_*v_i)-(q_i,g^i_*v_i)]
\end{align*}

\noindent and then rewrite the right-hand side as

\begin{align}
    & \quad \sum_ic_i[(\varphi^{-1},(\varphi \circ p_{i})_*v_i)-(\varphi^{-1},(\varphi \circ q_{i})_*g^i_*v_i)]\label{man regrouped 1.1} \\
    &+ \sum_i c_i[(p_i,v_i)-(\varphi^{-1},(\varphi \circ p_{i})_*v_i)+(\varphi^{-1},(\varphi \circ q_{i})_*g^i_*v_i)-(q_i,g^i_*v_i)]\label{man regrouped 1.2}
\end{align}

Now, regroup the related pairs of \ref{man regrouped 1.2} not by index, but rather by plot. That is, whenever $p_i$ agrees with some other $p_j$ or $q_k$ (and likewise for plots $q_i$), group all such pairs $(p_i,v_i)-(\varphi^{-1},(\varphi \circ p_{i})_*v_i)$ or $(\varphi^{-1},(\varphi \circ q_{i})_*g^i_*v_i)-(q_i,g^i_*v_i)$ together. With this organization, where we label groups of the same plot using only $p's$ (no longer $q's$), expression \ref{man regrouped 1.1}/\ref{man regrouped 1.2} becomes:

\begin{align*}
    (\varphi^{-1}, v - w) &= \sum_ic_i[(\varphi^{-1},(\varphi \circ p_{i})_*v_i)-(\varphi^{-1},(\varphi \circ q_{i})_*g^i_*v_i)] \\
    &+ \sum_{j=1}^{m_1}c^1_j[(p_1,v^1_j)-(\varphi^{-1},(\varphi \circ p_{1})_*v^1_j)] \\
    & \quad \quad +\sum_{k=1}^{n_1} c^1_k[(\varphi^{-1},(\varphi \circ p_{1})_*(g^{1,k})_*v^1_k)-(p_1,(g^{1,k})_*v^1_k)] \\
    & \vdots \\
    &+ \sum_{j=1}^{m_N} c^N_j[(p_N,v^N_j)-(\varphi^{-1},(\varphi \circ p_{N})_*v^N_j)] \\
    & \quad \quad +\sum_{k=1}^{n_N} c^N_k[(\varphi^{-1},(\varphi \circ p_{N})_*(g^{N,k})_*v^N_k)-(p_N,(g^{N,k})_*v^N_k)]
\end{align*}

\noindent where it is possible that $m_i$ or $n_i$ may be zero. We now proceed to show the above collection of sums equals zero. First, note that because we have $p_i = q_i \circ g^i$ as germs about $0$, it is the case that $\varphi_* p_{i*}=\varphi_* q_{i*} g^i_*$. Therefore

$$\sum_ic_i[(\varphi^{-1},(\varphi \circ p_{i})_*v_i)-(\varphi^{-1},(\varphi \circ q_{i})_*g^i_*v_i)] = \sum_ic_i[(\varphi^{-1},(\varphi \circ p_{i})_*v_i)-(\varphi^{-1},(\varphi \circ p_{i})_*v_i)] = 0$$

\noindent and so this entire sum (the first part of our expanded expression) can be eliminated from the expression for $(\varphi^{-1},v-w)$. This leaves us with

\begin{align*}
    (\varphi^{-1}, v - w) &= \sum_{j=1}^{m_1}c^1_j[(p_1,v^1_j)-(\varphi^{-1},(\varphi \circ p_{1})_*v^1_j)] \\
    & \quad \quad +\sum_{k=1}^{n_1} c^1_k[(\varphi^{-1},(\varphi \circ p_{1})_*(g^{1,k})_*v^1_k)-(p_1,(g^{1,k})_*v^1_k)] \\
    & \vdots \\
    &+ \sum_{j=1}^{m_N} c^N_j[(p_N,v^N_j)-(\varphi^{-1},(\varphi \circ p_{N})_*v^N_j)] \\
    & \quad \quad +\sum_{k=1}^{n_N} c^N_k[(\varphi^{-1},(\varphi \circ p_{N})_*(g^{N,k})_*v^N_k)-(p_N,(g^{N,k})_*v^N_k)]
\end{align*}

Now, observe that if $p_\ell = \varphi^{-1}$ for one of our remaining $N$ groupings, then said grouping takes the form

$$\sum_{j=1}^{m_\ell}c^\ell_j[(\varphi^{-1},v^\ell_j)-(\varphi^{-1},(\varphi \circ \varphi^{-1})_*v^\ell_j)] +\sum_{k=1}^{n_\ell} c^\ell_k[(\varphi^{-1},(\varphi \circ \varphi^{-1})_*(g^{\ell,k})_*v^\ell_k)-(\varphi^{-1},(g^{\ell,k})_*v^\ell_k)]=0$$

Therefore, such a group can be eliminated. We have so far shown that for any $v$ and $w$ equivalent in the summand $T_0(\varphi(U))$ of $F$ (where $F/R$ is the internal tangent space to $x$ in $M$) can be expressed as

\begin{align*}
    (\varphi^{-1}, v - w) &= \sum_{j=1}^{m_1}c^1_j[(p_1,v^1_j)-(\varphi^{-1},(\varphi \circ p_{1})_*v^1_j)] \\
    & \quad \quad +\sum_{k=1}^{n_1} c^1_k[(\varphi^{-1},(\varphi \circ p_{1})_*(g^{1,k})_*v^1_k)-(p_1,(g^{1,k})_*v^1_k)] \\
    & \vdots \\
    &+ \sum_{j=1}^{m_N} c^N_j[(p_N,v^N_j)-(\varphi^{-1},(\varphi \circ p_{N})_*v^N_j)] \\
    & \quad \quad +\sum_{k=1}^{n_N} c^N_k[(\varphi^{-1},(\varphi \circ p_{N})_*(g^{N,k})_*v^N_k)-(p_N,(g^{N,k})_*v^N_k)]
\end{align*}

\noindent where $p_\ell \neq \varphi^{-1}$. We now show that these remaining terms can also be eliminated. Indeed, for such a grouping associated to a plot $p_\ell$, namely

\begin{align*}
& \quad \sum_{j=1}^{m_\ell}c^\ell_j[(p_\ell,v^\ell_j)-(\varphi^{-1},(\varphi \circ p_{\ell})_*v^\ell_j)] +\sum_{k=1}^{n_\ell} c^\ell_k[(\varphi^{-1},(\varphi \circ p_{\ell})_*(g^{\ell,k})_*v^\ell_k)-(p_\ell,(g^{\ell,k})_*v^\ell_k)] \\
&= \left(p_\ell,\sum_{j=1}^{m_\ell} c^\ell_jv^\ell_j - \sum_{k=1}^{n_\ell}c^\ell_k(g^{\ell,k})_*v^\ell_k\right) + \left(\varphi^{-1},-\sum_{j=1}^{m_\ell} c^\ell_j(\varphi\circ p_{\ell})_*v^\ell_j + \sum_{k=1}^{n_\ell}c^\ell_k(\varphi\circ p_{\ell})_*(g^{\ell,k})_*v^\ell_k\right)
\end{align*}

\noindent we must have that the first term in the second line above satisfies

\begin{align}
    \sum_{j=1}^{m_\ell}c^\ell_j(p_\ell,v^\ell_j) - \sum_{k=1}^{n_\ell}c^\ell_k(p_\ell,(g^{\ell,k})_*v^\ell_k) = \left(p_\ell,\sum_{j=1}^{m_\ell} c^\ell_jv^\ell_j - \sum_{k=1}^{n_\ell}c^\ell_k(g^{\ell,k})_*v^\ell_k\right)=0\label{man regrouped 2}
\end{align}

\noindent because $p_\ell\neq \varphi^{-1}$ and, as noted initially (see the paragraphs following \ref{man cancellation}), such terms must cancel completely (and only with other terms indexed by the same plot $p_\ell$). Now, as all $v^\ell_j$ and $(g^{\ell,k})_*v^\ell_k$ lie in $T_0(U_{p_\ell})$ for all $j,k$, we can apply the differential $\varphi_*p_{\ell*}$ to this collection of vectors to observe that

\begin{align}
    0 &= -1\cdot \varphi_* p_{\ell*} \left[\sum_{j=1}^{m_\ell} c^\ell_jv^\ell_j - \sum_{k=1}^{n_\ell}c^\ell_k(g^{\ell,k})_*v^\ell_k \right] \\
    &= -\sum_{j=1}^{m_\ell} c^\ell_j\varphi_*p_{\ell*}v^\ell_j + \sum_{k=1}^{n_\ell}c^\ell_k\varphi_*p_{\ell*}(g^{\ell,k})_*v^\ell_k \label{man regrouped 3}
\end{align}

From the above equations \ref{man regrouped 2} and \ref{man regrouped 3}, it follows that

\begin{align*}
& \quad \sum_{j=1}^{m_\ell}c^\ell_j[(p_\ell,v^\ell_j)-(\varphi^{-1},(\varphi \circ p_{\ell})_*v^\ell_j)] +\sum_{k=1}^{n_\ell} c^\ell_k[(\varphi^{-1},(\varphi \circ p_{\ell})_*(g^{\ell,k})_*v^\ell_k)-(p_\ell,(g^{\ell,k})_*v^\ell_k)] \\
&= \left(p_\ell,\sum_{j=1}^{m_\ell} c^\ell_jv^\ell_j - \sum_{k=1}^{n_\ell}c^\ell_k(g^{\ell,k})_*v^\ell_k\right) + \left(\varphi^{-1},-\sum_{j=1}^{m_\ell} c^\ell_j(\varphi\circ p_{\ell})_*v^\ell_j + \sum_{k=1}^{n_\ell}c^\ell_k(\varphi\circ p_{\ell})_*(g^{\ell,k})_*v^\ell_k\right) \\
&= 0
\end{align*}

\noindent as desired. Hence each remaining term in the expression for $(\varphi^{-1},v-w)$ can be eliminated. This means that overall (in $F$, before modding out by $R$)

\begin{align*}
    (\varphi^{-1}, v - w) &= \sum_{j=1}^{m_1}c^1_j[(p_1,v^1_j)-(\varphi^{-1},(\varphi \circ p_{1})_*v^1_j)] \\
    & \quad \quad +\sum_{k=1}^{n_1} c^1_k[(\varphi^{-1},(\varphi \circ p_{1})_*(g^{1,k})_*v^1_k)-(p_1,(g^{1,k})_*v^1_k)] \\
    & \vdots \\
    &+ \sum_{j=1}^{m_N} c^N_j[(p_N,v^N_j)-(\varphi^{-1},(\varphi \circ p_{N})_*v^N_j)] \\
    & \quad \quad +\sum_{k=1}^{n_N} c^N_k[(\varphi^{-1},(\varphi \circ p_{N})_*(g^{N,k})_*v^N_k)-(p_N,(g^{N,k})_*v^N_k)] \\
    &= 0
\end{align*}

This means that $v \sim w$ if and only if $v = w$ in $T_0(\varphi(U))$, giving the conclusion. Therefore, when we have a manifold $M$ of dimension $n$, its internal tangent space at a point $x$ in $M$, given by $F/R$, is isomorphic to $T_0(\varphi(U)) \cong \rl^n$, where $(U, \varphi)$ is a chart centered about $x$ in $M$. The notion of internal tangent space at a point is identical to the notion of tangent space at a point, for a manifold.
\end{ex}

\begin{ex}\label{ex r1}
The internal tangent space to a point is defined for any diffeological space, including spaces which are not manifolds. As we are studying the orbit spaces of manifolds which are acted upon by proper Lie group actions, consider the specific case of the orbit space of the manifold $\rl$ acted on by $O(1)$, the orthogonal group of dimension $0$, equipped with the quotient diffeology. This action consists of the automorphisms $x \mapsto x$ and $x\mapsto-x$. For more details on the Lie theory involved, see the following chapter.

It is known that for $[x]\in \rl / O(1)$, $T_{[x]}(\rl / O(1)) \cong \rl$ when $[x] \ne [0]$ and $T_{[0]}(\rl / O(1))=\{0\}$ (see \cite{tan_space_and_bundle} Example $3.24$). We proceed to show both of these results, as an elucidation of our methods. Again, we employ Definition \ref{def 3.1} and view the internal tangent space to a point as the quotient $F/R$, and let $\pi:\rl \ra \rl / O(1)$ denote the projection map.

When looking at a point $[x]\ne[0]$ in $\rl / O(1)$, the argument is akin to the case when the space is a manifold. Indeed, consider a chart $(U,\varphi)$ centered about $|x|$ in $\rl$ with domain an open interval including $|x|$ but not $0$. With this setup, $\pi \circ \varphi^{-1}$ is a plot in the quotient diffeology on $\rl/O(1)$ mapping $0$ to $[x]$. Now, let $\ol{p}:U_{\ol{p}} \ra \rl /O(1)$ be any plot in the quotient diffeology on $\rl / O(1)$ mapping $0$ to $[x]$. As $\ol{p}$ lies in the quotient diffeology, we have that locally about $0$ in $U_{\ol{p}}$, $\ol{p}=\pi \circ p$ for $p$ a conventional smooth map $U_{\ol{p}} \ra \rl$ (i.e., a plot in the standard diffeology on $\rl$).

Because $\ol{p}(0)=[x]$, we may assume $p(0)=|x|$. We then have the following diagram:

$$
\xymatrix{
U_{\ol{p}} \ar[rr]^{\varphi \circ p} \ar[dr]_{\ol{p}} & & \varphi(U) \ar[dl]^{\pi \circ \varphi^{-1}} \\
& \rl/O(1) &
}
$$

\noindent where $\ol{p}=\pi \circ p=\pi \circ \varphi^{-1} \circ \varphi \circ p$ as germs, meaning $(\ol{p}, v) \sim (\pi \circ \varphi^{-1}, (\varphi \circ p)_*(v))$ in $F/R$.

We now address the question of well-definedness. Indeed, it is possible that $\ol{p}=\pi \circ p'$ as germs about $0$ for some $p'$ distinct from $p$. However, when this occurs then we must have $\pi \circ p = \pi \circ p'$ as germs about $0$, meaning that for $w$ in a neighborhood of $0$ in $U_{\ol{p}}$ we have

\begin{align}
    p(w)=\pm p'(w)\label{man well-defined}
\end{align}

However, as both $p$ and $p'$ are smooth maps in the usual sense (as elements of the standard diffeology on $\rl$) and $\ol{p}(0) = [x] \neq [0]$ we must have either $p(w) = p'(w)$ as germs or $p(w) = -p'(w)$ as germs about $0$ (i.e., there is a neighborhood of $0$ about which the sign does not change). Here we are explicitly using that $[x]\neq [0]$. Furthermore, because $\varphi$ is centered around $|x|$ and does not include $-|x|$ in its domain, we only have $\pi \circ p' = \pi \circ \varphi^{-1} \circ \varphi \circ p'$, that is, $(\ol{p},v)\sim (\pi\circ \varphi^{-1},(\varphi\circ p')_*(v))$ for some $p'$ distinct from $p$ if $p'(0)=p(0)=|x|$, which by the above implies $p=p'$ as germs about $0$. Thus, our usage of $p$ is unambiguous.

What the discussion so far means is that, as with the internal tangent space to a point of a manifold, we again obtain a surjective homomorphism mapping $T_0(\varphi(U)) \twoheadrightarrow F/R = T_{[x]}(\rl/O(1))$ when $[x]\neq [0]$ (though now the summand corresponds to plot $\pi \circ \varphi^{-1}$).  Since $T_0(\varphi(U)) \cong \rl$ in this example, it again remains to show the mapping is injective.

We can proceed akin to Example \ref{ex man}. Indeed, if two elements $v$ and $w$ of $F$ whose only nontrivial terms were contained in $T_0(\varphi(U))$, now corresponding to plot $\pi \circ \varphi^{-1}$, were equivalent, then we would have $v-w\in R$, meaning

$$(\pi \circ \varphi^{-1}, v - w) = \sum_ic_i[(\ol{p_i},v_i) - (\ol{q_i},g^i_*v_i)]$$

\noindent where $c_i\in \rl$, $\ol{p_i}, \ol{q_i}$ are pointed plots mapping $0$ to $[x]$, each $g^i$ is a germ of smooth maps such that $\ol{p_i}=\ol{q_i}\circ g^i$ as germs, and $v_i\in T_0(U_{\ol{p_i}})$. The above equality holds in $F$. One can then expand and group our plots as in Example \ref{ex man}, obtaining

\begin{align*}
    (\pi \circ \varphi^{-1}, v - w) &= \sum_ic_i[(\pi \circ \varphi^{-1},(\varphi \circ p_{i})_*v_i)-(\pi \circ \varphi^{-1},(\varphi \circ q_{i})_*g^i_*v_i)] \\
    &+ \sum_{j=1}^{m_1}c^1_j[(\ol{p_1},v^1_j)-(\pi\circ \varphi^{-1},(\varphi \circ p_{1})_*v^1_j)] \\
    & \quad \quad +\sum_{k=1}^{n_1} c^1_k[(\pi \circ \varphi^{-1},(\varphi \circ p_1)_*(g^{1,k})_*v^1_k)-(\ol{p_1},(g^{1,k})_*v^1_k)] \\
    & \vdots \\
    &+ \sum_{j=1}^{m_N} c^N_j[(\ol{p_N},v^N_j)-(\pi \circ \varphi^{-1},(\varphi \circ p_{N})_*v^N_j)] \\
    & \quad \quad +\sum_{k=1}^{n_N} c^N_k[(\pi \circ \varphi^{-1},(\varphi \circ p_{N})_*(g^{N,k})_*v^N_k)-(\ol{p_N},(g^{N,k})_*v^N_k)]
\end{align*}

Now, because we have $\ol{p_i} = \ol{q_i}\circ g^i$ as germs, locally about $0$ it must be the case for $w\in U_{\ol{p}}$ that $p_i(w)=\pm q_i(g^i(w))$ (recall from the discussion surrounding \ref{man well-defined} that the $p_i$ and $q_i$ plots such that $\ol{p_i} = \pi \circ p_i$ and $\ol{q_i}=\pi \circ q_i$ can be chosen unambiguously). Then, because $p_i(0)=q_i(g^i(0))=|x|>0$, we must have $p_i=q_i\circ g^i$ as germs as well. Thus, $\varphi_*p_{i*}=\varphi_*q_{i*}g^i_*$. With this dealt with, one can proceed analogously to Example \ref{ex man} to conclude that $T_{[x]}(\rl/O(1))=F/R \cong T_0(\varphi(U))\cong \rl$ when $[x]\neq 0$.

We now show $T_{[0]}(\rl / O(1))=\{0\}$. For any internal tangent vector of the form $(\ol{p},v)\in F/R=T_{[0]}(\rl / O(1))$, because $\ol{p}$ lies in the quotient diffeology on $\rl / O(1)$ it is the case that locally $\ol{p}=\pi \circ p$ for $p$ a plot in the standard diffeology on $\rl$. Because $\pi$ is a pointed plot when $[x]=[0]$, we have the following commutative diagram:

$$
\xymatrix{
U_{\ol{p}} \ar[rr]^{p} \ar[dr]_{\ol{p}} & & \rl \ar[dl]^{\pi} \\
& \rl/O(1) &
}
$$

That is, $(\ol{p},v)-(\pi,p_*(v))$ is in $R$. So, to complete our argument we need to show $(\pi,v)=0$ for all $v\in T_0(\rl)$. If we take $g:\rl \ra \rl$ to be the automorphism $g(x)=-x$ (that is, the action of $-1\in O(1)$ on $\rl$), we have the following diagram:

$$
\xymatrix{
\rl \ar[rr]^{g} \ar[dr]_{\pi} & & \rl \ar[dl]^{\pi} \\
& \rl/O(1) &
}
$$

\noindent where we have $\pi = \pi \circ g$ as germs (in fact, they are identically equal), meaning, for any $v\in T_0(\rl)$, $(\pi,v) \sim (\pi, g_*(v))=(\pi,-v)$. That is, in $T_{[0]}(\rl / O(1))=F/R$, we have $0=(\pi,v) + (\pi,-v)=(\pi,2v)$, for any $v\in T_0(\rl)$. This is the desired result.
\end{ex}

While illustrative, there are still several factors at play in Example \ref{ex r1} which differentiate it from the general case. In particular, the action of $O(1)$ is a compact linear group action on a Euclidean space consisting of finitely-many well-understood elements. The general problem requires a more sophisticated tack.

We close this chapter with two theoretical results concerning the internal tangent space to a point of a diffeological space. First, it is known that the relations between internal tangent vectors are determined by the two-dimensional plots.

\begin{prop} [\cite{tan_space_and_bundle} Proposition 3.4] \label{prop 3.4}
    Given a diffeological space $X$, let $X'$ denote the diffeological space with the same underlying set but with diffeology generated by all plots $p:\rl^2 \ra X$, for $p\in \dc_X$. The spaces $T_x(X')$ and $T_x(X)$ are isomorphic.
\end{prop}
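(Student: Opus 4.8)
The plan is to exhibit an explicit isomorphism $\Phi : T_x(X') \ra T_x(X)$ together with an inverse $\Psi$, both described on the generators of the presentations $F'/R'$ and $F/R$ of Definition \ref{def 3.1}. The starting point is that $\dc_{X'} \subseteq \dc_X$, since $\dc_X$ is itself a diffeology containing all the generating plots $p : \rl^2 \ra X$; hence every pointed plot of $X'$ is a pointed plot of $X$, and the resulting inclusion of summands induces a well-defined linear map $\Phi : T_x(X') \ra T_x(X)$ sending the class of $(\bar p, v)$ to the class of $(\bar p, v)$. The bridge between the two diffeologies is the following lemma, which I would prove from the locality axiom of Definition \ref{diff def} together with the explicit description of a generated diffeology recalled just after it: \emph{every plot of $X$ whose domain is open in $\rl^k$ for $k \le 2$ is already a plot of $X'$.} Indeed, around each point of its domain such a plot restricts to a composite $g \circ h$ with $h$ smooth and $g : \rl^2 \ra X$ one of the generating plots of $\dc_{X'}$ (obtained from the given plot by precomposing with a diffeomorphism onto a coordinate ball, and additionally with a projection $\rl^2 \ra \rl$ when the domain is one-dimensional), while the zero-dimensional case is covered by constancy.

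To build $\Psi : T_x(X) \ra T_x(X')$, for a pointed plot $p : (U_p,0) \ra (X,x)$ and $v \in T_0(U_p) \cong \rl^{\dim U_p}$ let $c_v(t) := tv$ be the straight-line curve in $U_p$ with $c_v'(0) = v$ (defined on a small interval around $0$), and set $\Psi(p,v) := [(p \circ c_v, \pa_t|_0)]$; by the lemma $p \circ c_v$ is a pointed plot of $X'$, so the right-hand side lies in $T_x(X')$. I would then check that $v \mapsto \Psi(p,v)$ is linear on each summand $T_0(U_p)$: homogeneity follows from the reparametrization $c_{\lambda v} = c_v \circ m_\lambda$ with $m_\lambda(t) = \lambda t$, and additivity from the auxiliary two-dimensional plot $P(s,t) := sv + tw$ (again a plot of $X'$ by the lemma), by comparing its two coordinate edge-curves $P(t,0) = c_v(t)$, $P(0,t) = c_w(t)$ and its diagonal $P(t,t) = c_{v+w}(t)$ inside the single summand $T_0(\mathrm{dom}(p \circ P))$, where vector-space arithmetic survives passage to the quotient because $F$ is a direct sum. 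These assemble to a linear map $F \ra T_x(X')$.

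The crux is to show that this map annihilates every basic relation $(p,v) - (q, g_*v)$ with $p = q \circ g$, so that it descends to $F/R$. Writing $g(tv) = t\, g_*v + t^2 e(t)$ for a smooth germ $e$ (Taylor expansion of $g$ along $c_v$), the pushed-forward curve $g \circ c_v$ and the straight curve $c_{g_*v}$ in $U_q$ share a velocity but can differ at second order; I would interpolate them by the two-dimensional plot $\wt{H}(s,t) := c_{g_*v}(t) + st\, e(t)$ on a small ball about $0$, so that $q \circ \wt{H}$ is a pointed plot of $X'$ whose restriction to $\{s=0\}$ is $q \circ c_{g_*v}$, to the diagonal $\{s=t\}$ is $q \circ (g \circ c_v) = p \circ c_v$, and to $\{t=0\}$ is the constant plot at $x$. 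Since the constant plot $\rl \ra X$ at $x$ represents $0$ in any internal tangent space (compose it with the zero map $\rl \ra \rl$), and the basepoint-preserving inclusions of these three curves carry their classes to $(q \circ \wt{H}, e_2)$, $(q \circ \wt{H}, e_1 + e_2)$, and $(q \circ \wt{H}, e_1)$ respectively inside the single summand $T_0(\mathrm{dom}(q \circ \wt{H})) \cong \rl^2$, we get $(p \circ c_v, \pa_t|_0) \sim (q \circ \wt{H}, e_1 + e_2) = (q \circ \wt{H}, e_1) + (q \circ \wt{H}, e_2) \sim (q \circ c_{g_*v}, \pa_t|_0)$ in $T_x(X')$, i.e. $\Psi(p,v) = \Psi(q, g_*v)$. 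This is where I expect the real work to lie, and it is exactly here that two-dimensionality of the plots is essential: a single curve cannot absorb the quadratic discrepancy between $g \circ c_v$ and $c_{g_*v}$, which is the reason the proposition is about $\rl^2$-plots rather than $\rl^1$-plots.

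Finally I would verify $\Psi \circ \Phi = \mathrm{id}_{T_x(X')}$ and $\Phi \circ \Psi = \mathrm{id}_{T_x(X)}$ on generators; in both cases the only relation needed is $(\bar p \circ c_v, \pa_t|_0) - (\bar p, v)$, which is a basic relation (with germ $c_v$) in whichever of the two spaces contains $\bar p$ as a pointed plot. Hence $\Phi$ is an isomorphism and $T_x(X') \cong T_x(X)$. As an independent sanity check, $\Phi$ is visibly surjective already, since each generator $(p,v)$ of $T_x(X)$ is the image of $[(p \circ c_v, \pa_t|_0)]$ under $\Phi$.
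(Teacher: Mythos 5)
The paper does not prove this proposition at all: it is imported verbatim from \cite{tan_space_and_bundle} (Proposition 3.4 there), so there is no in-paper argument to compare against. Your proposal is a correct, self-contained proof, and it is essentially the standard argument underlying the cited result. The three load-bearing pieces all check out. First, your bridging lemma is right: a plot of $X$ with domain open in $\rl^k$, $k\le 2$, locally factors through an $\rl^2$-generator (via a chart ball and, in the one-dimensional case, a projection), so it lies in $\dc_{X'}$ by smooth compatibility and locality; together with $\dc_{X'}\subseteq\dc_X$ this makes both $\Phi$ and the target of $\Psi$ meaningful. Second, the ``curve trick'' $(p,v)\sim(p\circ c_v,\pa_t|_0)$ is a genuine basic relation with germ $c_v$, which is exactly what makes $\Phi\circ\Psi$ and $\Psi\circ\Phi$ identities on generators, and your additivity check via $P(s,t)=sv+tw$ is the correct use of a two-dimensional plot to turn the three one-dimensional classes $(p\circ c_v,\pa_t)$, $(p\circ c_w,\pa_t)$, $(p\circ c_{v+w},\pa_t)$ into $e_1$, $e_2$, $e_1+e_2$ in a single summand. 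Third, the crux --- descending $\Psi$ through a basic relation $p=q\circ g$ --- is handled correctly by the interpolating plot $\wt H(s,t)=tg_*v+ste(t)$, whose diagonal, $\{s=0\}$-edge, and $\{t=0\}$-edge give respectively $p\circ c_v$, $q\circ c_{g_*v}$, and the constant plot (which represents $0$ via the zero reparametrization); you are also right that this is precisely the point where one-dimensional plots would not suffice, since the quadratic discrepancy $t^2e(t)$ cannot be absorbed by a single curve. If you write this up in full, the only places demanding extra care are routine: restrict all auxiliary plots ($c_v$, $P$, $\wt H$) to small enough connected neighborhoods of $0$ so that they land in $U_p$ or $U_q$ and remain pointed, and note that killing each basic relation suffices to kill all of $R$ because $R$ is their span and $\Psi$ is linear on $F$.
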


\begin{ex}
    This example, drawn from \cite{tan_space_and_bundle} Example $3.22$, shows that we indeed need two-dimensional plots in Proposition \ref{prop 3.4}. That is, the internal tangent space of a diffeological space at a point is NOT determined by the one-dimensional plots. Let $X=\rl^n$ be equipped with the wire diffeology introduced in Example \ref{ex wire}. It is the case that $T_x(X)$ has uncountable dimension when $n\ge 2$ for any point $x\in X$. In particular, the higher-dimensional plots which allow for dimension-reducing relations do not in general factor through the $1$-dimensional plots, and so aren't present. For example, in the case where $n=2$, the internal tangent vectors $(p_{\alpha},\frac{d}{dt})$ for $\alpha \in \rl$ are all linearly independent, where $p_{\alpha}:\rl \ra X$ sends $x$ to $(x,\alpha x)$.
\end{ex}

Another known result is that, as with the tangent space to a point of a manifold, the notion of internal tangent space to a point of a diffeological space is ``local." 

\begin{lem}[\cite{tan_space_and_bundle} Proposition 3.6]\label{lem local}
    Let $A$ be a $D$-open neighborhood of $x$ in a diffeological space $X$, equipped with the subset diffeology. The spaces $T_x(A)$ and $T_x(X)$ are isomorphic.
\end{lem}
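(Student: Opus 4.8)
The plan is to use the inclusion $\iota : A \hookrightarrow X$, which is smooth for the subset diffeology, and to show that the induced map $\iota_* : T_x(A) \to T_x(X)$, sending (the class of) $(p,v)$ to (the class of) $(\iota\circ p, v)$, is a linear isomorphism by exhibiting an explicit inverse. That $\iota_*$ is well defined is immediate: a basic relation $(a,w)-(b,g_*w)$ in the relation space for $A$, arising from a germ $g:(U_a,0)\to(U_b,0)$ with $a = b\circ g$, is sent to the basic relation $(\iota\circ a,w)-(\iota\circ b,g_*w)$ in the relation space for $X$. The hypothesis is used at exactly one point, the outset: since $A$ is $D$-open, for every plot $p:U_p\to X$ the set $p^{-1}(A)$ is open in $U_p$ (Definition \ref{d-top def}), so for a plot $p$ pointing to $x$ there is a connected open neighborhood $V_p$ of $0$ with $V_p\subseteq p^{-1}(A)$, and then $p|_{V_p}$ is a plot of $A$ pointing to $x$ (its composite with $\iota$ is a restriction of the plot $p$, hence a plot of $X$).

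Using this I would define a candidate inverse $r:T_x(X)\to T_x(A)$ on generators by $r(p,v)=(p|_{V_p},v)$, where $T_0(V_p)$ is identified with $T_0(U_p)$ via the open inclusion $V_p\hookrightarrow U_p$ (whose differential is the identity under the canonical identifications). The bulk of the argument is checking that $r$ is well defined. Independence of the choice of $V_p$ follows by passing to the connected component of $V_p\cap V_p'$ containing $0$: both restrictions factor through this smaller plot via open inclusions fixing $0$, so the two resulting elements are equivalent in $T_x(A)$ by Definition \ref{def 3.1}. To see that $r$ annihilates the relation space of $X$, take a basic relation $(a,w)-(b,g_*w)$ with $a=b\circ g$ near $0$; first fix $V_b\subseteq b^{-1}(A)$, then shrink $V_a$ (keeping it inside $a^{-1}(A)$ and inside the domain of $g$) so that $g(V_a)\subseteq V_b$, which is possible since $g$ is continuous with $g(0)=0$. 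Then $a|_{V_a}=(b|_{V_b})\circ(g|_{V_a})$ as germs, so $(a|_{V_a},w)-(b|_{V_b},(g|_{V_a})_*w)$ is a basic relation for $A$, and $(g|_{V_a})_*w=g_*w$ under the identifications; together with the independence just shown, this gives that $r((a,w))-r((b,g_*w))$ lies in the relation space of $A$.

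It then remains to verify that $r$ and $\iota_*$ are mutually inverse, which is formal. For $\iota_*\circ r$: writing $j:V_p\hookrightarrow U_p$ for the inclusion, $\iota_*(r(p,v))=(\iota\circ p|_{V_p},v)=(p\circ j,v)$, and the commutative triangle over $X$ with top map $j$ gives $(p\circ j,v)\sim(p,j_*v)=(p,v)$. For $r\circ\iota_*$: if $q:U_q\to A$ is a plot pointing to $x$, then $\iota\circ q$ has image in $A$, so $(\iota\circ q)^{-1}(A)=U_q$, and $r(\iota_*(q,v))=((\iota\circ q)|_{U_q},v)$ (up to the choice-normalization absorbed into well-definedness); reading the plot $(\iota\circ q)|_{U_q}$ as a plot of $A$ recovers $q$, so this is $(q,v)$. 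Since the generators span, both composites are the identity, so $\iota_*$ is an isomorphism.

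I expect the main obstacle to be the bookkeeping in the well-definedness of $r$ on the relation space of $X$: for a given element one must choose the restricting neighborhoods of all the plots occurring in it so that every germ identity $a=b\circ g$ survives restriction, while keeping those choices compatible with the ones used to define $r$ on individual generators. The ``pass to the connected component containing $0$'' move and the continuity-based shrinking of $V_a$ are precisely what make this compatibility possible; beyond that the argument is routine manipulation of the quotient $F/R$, in the spirit of Examples \ref{ex man} and \ref{ex r1}.
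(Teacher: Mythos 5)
Your argument is correct. The paper does not actually prove this lemma---it only cites \cite{tan_space_and_bundle}, Proposition 3.6---but your construction of an explicit inverse to $\iota_*$ by restricting each pointed plot $p$ to a connected open neighborhood $V_p\subseteq p^{-1}(A)$ of $0$ is the standard argument and is sound; note also that the compatibility bookkeeping you flag at the end is already automatic, since $r$ is defined summand-by-summand on $F$ and its value on each generator is independent of the choice of $V_p$, so each basic relation of $X$ can be treated with whatever shrunken neighborhoods are convenient without affecting the values already assigned.
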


\begin{rem}
    In light of the discussion preceding Example \ref{ex d-top}, when $X$ is a manifold or a quotient space arising from a manifold, the condition of being $D$-open can be replaced by being open in the standard topology or open in the quotient topology, respectively, in Lemma \ref{lem local}.
\end{rem}

\chapter[Lie Theory]{\textbf{Chapter III. Lie Theory}}\label{sec_lie_theory}

In order to understand the general structure of the internal tangent space to a point of an orbit space of a manifold acted upon by a proper Lie group action, we need to first introduce the necessary definitions and results concerning Lie groups and their actions, along with those concerning Lie algebras. Our primary reference will be \cite{dk_lie}, though material from \cite{lee_smooth}, \cite{tu_man}, and \cite{strat} is also helpful. In this chapter, diffeomorphism means diffeomorphism of manifolds in the usual sense, unless explicitly noted.

\begin{defn}
    A \textit{Lie group} is a group $G$ which also has the structure of a smooth manifold, under which the group operations of multiplication and inversion, given below, are smooth.
    $$\text{multiplication} : G \times G \ra G, \ \  (a,b)\mapsto ab$$
    $$\text{inversion} : G \ra G, \ \ a\mapsto a^{-1} $$
    
    We shall denote the identity element of a Lie group by $1$.
\end{defn}

\begin{defn}
    The tangent space to the identity, $\gf=T_1G$, provided with the usual Lie bracket, is called the \textit{Lie algebra} of the Lie group $G$.
\end{defn}

Our focus is on the orbit spaces of manifolds under Lie group actions, in particular when these orbit spaces are not manifolds. In this paper, we define an action as follows.

\begin{defn} \label{action}
    A \textit{Lie group action} (or herein just ``action") of a Lie group $G$ on a manifold $M$ is a smooth mapping $A:G\times M \ra M$ such that
    $$A(gh,x)=A(g,A(h,x)), \ \ g,h\in G, x\in M.$$   
    We will also often suppress the action and write $g\cdot x$ for $A(g,x)$, and we will also sometimes write $A(g,x)$ as either $(A(g))(x)$ or $A(g)(x)$.
\end{defn}

\begin{rem}
    Throughout this document, ``action" will mean precisely the above. Therefore, all actions considered arise from Lie groups, are smooth, and act on manifolds as described, even if not explicitly mentioned. The above is more specifically a ``left action," and right actions are defined analogously.
\end{rem}

An action of a group $G$ on a manifold $M$ is said to be \textit{proper} if the mapping

$$(g,x) \mapsto (g\cdot x, x)$$

\noindent is a proper mapping $G\times M \ra M\times M$, meaning that the inverse image of every compact set in the codomain is compact in the domain.

For each $x\in M$, for $M$ a manifold, the \textit{orbit} of the action $A$ through $x$ is the set:

$$A(G)(x)=G\cdot x=\{g\cdot x \ | \ g\in G\}$$

\begin{rem}
    We can also think of the orbit through $x$ as the image of the mapping $A_x : G \ra M$ such that $g \mapsto A(g)(x)$. As this will be a useful notion later, we note it here.
\end{rem}

We denote by $M / G$ the collection of orbits (which are equivalence classes, because the set of orbits forms a partition for $M$) of an action of $G$ on a manifold $M$ and by $\pi : M \ra M/G$ the canonical projection. Some authors use the notation $G \backslash M$ for the orbits of a left action, reserving $M/G$ for the orbits of a right action, but we will not follow this convention. Assign the quotient topology and quotient diffeology to $M / G$ (via $\pi$), and note that this topology is Hausdorff when the action is proper (\cite{dk_lie} Lemma 1.11.3).

An action is said to be \textit{transitive} if $G\cdot x = M$ for some $x\in M$. For each $x\in M$, we define the \textit{stabilizer} of $x$ under the action to be the subgroup of $G$ given by:

$$G_x=\{g\in G \ | \ g\cdot x = x\}$$

The stabilizer can be shown to be compact when the action is proper. Indeed, for a proper action of a Lie group $G$ on $M$ consider the inverse image of the compact set $\{(x,x)\}\in M\times M$ under the mapping $(g,x) \mapsto (g\cdot x, x)$.

An action is \textit{free} if $G_x=\{1\}$ for all $x\in M$. It is shown in \cite{dk_lie} (Theorem $1.11.4$) that when $G$ admits a proper and free action on a manifold $M$ there is a unique manifold structure on $M / G$. Therefore, for our purposes in determining the internal tangent space to points of quotients of manifolds by proper Lie group actions, we are interested in actions which are proper but not free. We now address several more specialized results and definitions.

\begin{defn}\label{equivalence}
    If $A$, and $B$, are actions of a Lie group $G$ on manifolds $X$, and $Y$, respectively, we say a mapping $\Phi : X \ra Y$ is \textit{$G$-equivariant} if
    $$(\Phi \circ A(g)) (x) = (B(g) \circ \Phi)(x), \ \ g\in G$$
    \noindent for all $x\in X$. If the map $\Phi$ is a $G$-equivariant diffeomorphism then we call it an \textit{equivalence} of actions, and say the actions are \textit{equivalent}.
\end{defn}

In light of this, note that the below theorem says that the action of a \textit{compact} Lie group $K$ on a manifold $M$ which fixes a point $x_0$ is, when restricted to a specified $K$-invariant open neighborhood $U$ of $x_0$, equivalent to a linear action of $K$ on $T_{x_0}M$, suitably restricted to an open neighborhood of $0$ (see \cite{dk_lie}, p. 98).

\begin{thm}[\cite{dk_lie} Theorem 2.2.1; Bochner's Linearization Theorem] \label{bochner}
    Let $A$ be a Lie group action of a compact group $K$ on $M$ and let $x_0\in M$ be such that $A(k)(x_0)=x_0$, for all $k\in K$. Then there exists a $K$-invariant open neighborhood $U\subseteq M$ of $x_0$ and a diffeomorphism $\chi$ from $U$ onto an open neighborhood $V\subseteq T_{x_0}M$ of $0$, such that:
    $$\chi(x_0) = 0, \ \ \chi_*|_{x_0}=id : T_{x_0}M \ra T_{x_0}M$$
    \noindent and:  
    $$\chi(A(k)(x)) = (A(k)_*|_{x_0})(\chi(x)), \ \ k\in K, x\in U$$
\end{thm}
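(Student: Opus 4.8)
The plan is to use the classical averaging trick over the compact group $K$, which is the standard route to this linearization. First I would fix a diffeomorphism $\psi$ from an open neighborhood $\Omega$ of $x_0$ in $M$ onto an open neighborhood of $0$ in $T_{x_0}M$ with $\psi(x_0) = 0$ and $\psi_*|_{x_0} = \mathrm{id}_{T_{x_0}M}$; such a $\psi$ is obtained from any chart centered at $x_0$ by post-composing with the inverse of its differential at $x_0$. Since $\psi$ need not be defined on a $K$-invariant set, I would then shrink: using compactness of $K$ together with continuity of $A$ (so that, by a finite subcover argument, some neighborhood $W$ of $x_0$ satisfies $A(K)(W) \subseteq \Omega$), the set $U_0 := A(K)(W) = \bigcup_{k \in K} A(k)(W)$ is open, $K$-invariant, and contained in $\Omega$, so $\psi$ restricts to it.

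Next I would define, with $dk$ normalized Haar measure on $K$,
$$
\chi(x) \;=\; \int_K \bigl(A(k)_*|_{x_0}\bigr)^{-1}\,\psi\bigl(A(k)(x)\bigr)\,dk ,
$$
an average of vectors in the fixed vector space $T_{x_0}M$, hence a smooth $T_{x_0}M$-valued map on $U_0$ (smoothness and the computation of its differential being by differentiation under the integral sign, legitimate since $K$ is compact and the integrand is jointly smooth in $(k,x)$). Three checks follow by direct computation. Base point: $\psi(A(k)(x_0)) = \psi(x_0) = 0$ for every $k$, so $\chi(x_0) = 0$. Equivariance: for $h \in K$ one has $A(k)(A(h)(x)) = A(kh)(x)$, and after the substitution $k \mapsto kh^{-1}$ (permissible by invariance of $dk$) the coefficient becomes $\bigl(A(kh^{-1})_*|_{x_0}\bigr)^{-1} = \bigl(A(h)_*|_{x_0}\bigr)\bigl(A(k)_*|_{x_0}\bigr)^{-1}$, so $A(h)_*|_{x_0}$ factors out of the integral and $\chi(A(h)(x)) = \bigl(A(h)_*|_{x_0}\bigr)(\chi(x))$. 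Differential: differentiating at $x_0$ under the integral and using $\psi_*|_{x_0} = \mathrm{id}$ and $\bigl(A(k)(\cdot)\bigr)_*|_{x_0} = A(k)_*|_{x_0}$, each integrand contributes $\bigl(A(k)_*|_{x_0}\bigr)^{-1}\circ\mathrm{id}\circ A(k)_*|_{x_0} = \mathrm{id}$, whence $\chi_*|_{x_0} = \mathrm{id}$.

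Finally, since $\chi_*|_{x_0}$ is invertible, the inverse function theorem provides neighborhoods on which $\chi$ is a diffeomorphism; it remains to arrange a $K$-invariant domain. I would fix a $K$-invariant inner product on $T_{x_0}M$ (obtained by averaging any inner product over $K$), take $V$ to be a small open ball about $0$ in the associated norm lying inside the image of the diffeomorphic piece, and set $U := \chi^{-1}(V)$. Then $V$ is invariant under the linear isotropy action because that action is by isometries of this norm, and $U$ is $K$-invariant because $\chi$ is equivariant: $\chi(x) \in V$ forces $\chi(A(k)(x)) = \bigl(A(k)_*|_{x_0}\bigr)(\chi(x)) \in V$. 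The restricted map $\chi : U \to V$ and this $V$ then satisfy all the asserted properties. The only real obstacle is the bookkeeping around domains, namely ensuring $A(k)(x)$ stays in the domain of $\psi$ for all $k \in K$ (handled by compactness of $K$) and that the final diffeomorphic neighborhood can be taken $K$-invariant (handled by equivariance of $\chi$ and a $K$-invariant norm); the algebraic heart, the equivariance identity, is a one-line manipulation of the Haar integral.
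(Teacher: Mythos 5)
The paper does not prove this statement---it is quoted verbatim from \cite{dk_lie} (Theorem 2.2.1)---and your argument is precisely the standard Haar-averaging proof given there: average a centered chart against the linear isotropy action, obtain equivariance from the (bi-)invariance of the normalized Haar measure under $k\mapsto kh^{-1}$, compute $\chi_*|_{x_0}=\mathrm{id}$ by differentiating under the integral, and finish with the inverse function theorem and an averaged invariant inner product. The argument is correct as a strategy; the one step to tighten is the last, where $U:=\chi^{-1}(V)$ must be taken inside (a $K$-invariant open subset of) the neighborhood on which the inverse function theorem guarantees injectivity---e.g.\ first replace that neighborhood $U_1$ by the open invariant set $\{x : A(k)(x)\in U_1 \ \forall k\in K\}$---since the full preimage of $V$ under $\chi$ on its whole domain could a priori contain points where $\chi$ fails to be injective.
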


Note that above and throughout this document, we identify $T_0(T_xM)$ with $T_xM$, hence the codomain of our differential map in Theorem \ref{bochner}.

\begin{rem}
    Within the scope of this document, the \textit{tangent action} is defined as follows: given an action--denoted by $A$--of $K$ a compact Lie group on a manifold $M$ which fixes a point $x\in M$, the tangent action by an element $k\in K$ on $T_xM$ is the differential of the map $A(k):M\ra M$ mapping $x$ to $k\cdot x$, at $x$. That is, the tangent action, itself a compact action, can be thought of as the $\ci$ homomorphism sending $k\mapsto (A(k))_*|_{x}$.
\end{rem}

In our work, $K$ will usually be the stabilizer to a point $x_0$, namely $G_{x_0}$, in a given manifold $M$ acted on by a Lie group $G$, which we recall is compact when the action of $G$ on $M$ is proper. 

\begin{defn}[\cite{dk_lie} Definition 2.3.1]\label{slice def}
    Let $A:G\times M \ra M$ be an action of a Lie group $G$ on a manifold $M$. A (smooth) \textit{slice} at $x_0\in M$ for the action $A$ is a submanifold $S$ of $M$ through $x_0$ such that:
    \begin{enumerate}
        \item $T_{x_0}M = (A_{x_0})_*|_{1}(\gf) \oplus T_{x_0}S;$ and $T_xM=(A_x)_*|_{1}(\gf) + T_xS, \ x\in S$;
        \item $S$ is $G_{x_0}$-invariant;
        \item if $g\in G, x\in S$, and $g\cdot x\in S$, then $g\in G_{x_0}$.
    \end{enumerate}
\end{defn}

Recall that $A_{x_0}:G\ra M$ is the map sending $g$ to $A(g)(x_0)$.

\begin{rem}
    $G\cdot x_0$ is an immersed submanifold of $M$, and we have $(A_{x_0})_*|_{1}(\gf)=T_{x_0}(G\cdot x_0)$, as mentioned in \cite{dk_lie} on pages $56-57$ and $95$. 
\end{rem}

Slices are a crucial tool available when studying a proper Lie group action.

\begin{thm}[\cite{dk_lie} Theorem 2.3.3; Slice Theorem] \label{slice}
    For $A$ a proper action of a Lie group $G$ on a manifold $M$, there exists a slice $S$ through $x_0$ (which depends on $x_0$), for any $x_0\in M$.
\end{thm}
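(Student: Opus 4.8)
The plan is to construct the slice by exponentiating a $G_{x_0}$-invariant complement to the orbit direction, and then to verify the three conditions of Definition \ref{slice def}, with properness entering only in the third. Write $K := G_{x_0}$, which is compact by the observation following the definition of the stabilizer. First I would average an arbitrary Riemannian metric on $M$ over $K$ to obtain a $K$-invariant metric, and let $\exp_{x_0} : T_{x_0}M \to M$ be its Riemannian exponential map; this is a diffeomorphism from a star-shaped neighborhood of $0$ onto a neighborhood of $x_0$ with $D\exp_{x_0}|_0 = \mathrm{id}$, and since each $A(k)$ with $k\in K$ is an isometry fixing $x_0$, it is $K$-equivariant in the sense that $A(k)(\exp_{x_0}(v)) = \exp_{x_0}\!\big((A(k)_*|_{x_0})\,v\big)$. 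The orbit $G\cdot x_0$ is $K$-invariant and $K$ fixes $x_0$, so $T_{x_0}(G\cdot x_0) = (A_{x_0})_*|_1(\gf)$ is a $K$-invariant subspace of $T_{x_0}M$; I take $W$ to be its orthogonal complement for the $K$-invariant metric, which is then also $K$-invariant, and set $S = S_\epsilon := \exp_{x_0}\!\big(\{w \in W : |w| < \epsilon\}\big)$ for $\epsilon$ to be shrunk as needed.

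With this definition, condition 1 at $x_0$ and condition 2 come essentially for free: $T_{x_0}S = W$, so $T_{x_0}M = (A_{x_0})_*|_1(\gf) \oplus T_{x_0}S$; and $S$ is $K$-invariant because $W$ is and $\exp_{x_0}$ is $K$-equivariant. For the relation $T_xM = (A_x)_*|_1(\gf) + T_xS$ at other points $x\in S$, I would observe that the bundle map $(\xi,v)\mapsto (A_x)_*|_1(\xi)+v$ from $\gf\times T_xS$ to $T_xM$ is surjective at $x_0$ and that surjectivity is an open condition along $S$, so shrinking $\epsilon$ makes it hold on all of $S$.

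The hard part is condition 3. I would argue by contradiction: if it failed for every $\epsilon = 1/n$, there would be $g_n \in G\setminus K$ and $x_n \in S_{1/n}$ with $g_n\cdot x_n \in S_{1/n}$, so $x_n\to x_0$ and $g_n\cdot x_n \to x_0$. Here properness is used: the pairs $(g_n\cdot x_n, x_n)$ eventually lie in a compact neighborhood of $(x_0,x_0)$, so the $g_n$ lie in a compact subset of $G$ and, after passing to a subsequence, $g_n \to g_0$ with $g_0\cdot x_0 = x_0$, i.e.\ $g_0\in K$. Replacing $g_n$ by $g_0^{-1}g_n$ (still outside $K$, and still carrying $x_n$ into the $K$-invariant set $S$), I may assume $g_n\to 1$. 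Now I would fix a linear splitting $\gf = \gf_{x_0}\oplus\mathfrak q$, so that $(\xi,k)\mapsto \exp_G(\xi)\,k$ is a diffeomorphism near $(0,1)$ from $\mathfrak q\times K$ to $G$, and consider $F(\xi,w) := \exp_G(\xi)\cdot\exp_{x_0}(w)$; its differential at $(0,0)$ is $(\xi,w)\mapsto (A_{x_0})_*|_1(\xi) + w$, an isomorphism $\mathfrak q\oplus W \to T_{x_0}M$, so $F$ is a local diffeomorphism at $(0,0)$. Writing, for large $n$, $g_n = \exp_G(\xi_n)\,k_n$ with $\xi_n\in\mathfrak q$, $k_n\in K$, $\xi_n\to 0$, $k_n\to 1$, and $x_n = \exp_{x_0}(w_n)$ with $w_n\in W$, the $K$-equivariance of $\exp_{x_0}$ gives $g_n\cdot x_n = \exp_G(\xi_n)\cdot\exp_{x_0}(u_n) = F(\xi_n,u_n)$ where $u_n := (A(k_n)_*|_{x_0})w_n \in W$; but also $g_n\cdot x_n = \exp_{x_0}(w_n') = F(0,w_n')$ for some $w_n'\in W$. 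Since all arguments tend to $(0,0)$ and $F$ is injective there, $\xi_n = 0$ for large $n$, so $g_n = k_n\in K$, contradicting $g_n\notin K$.

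I expect condition 3 to be the only real obstacle: it is where properness is indispensable, and it requires marrying the compactness argument that produces a subsequential limit of the $g_n$ with the local normal form $F$ near the identity. Everything else reduces to the $K$-equivariance of the Riemannian exponential map and the openness of the transversality condition.
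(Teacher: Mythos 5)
The paper does not prove this statement: it is quoted directly from \cite{dk_lie}, and your argument is essentially the proof given in that reference. The only cosmetic difference is that you build the $G_{x_0}$-equivariant linearization from the Riemannian exponential map of an averaged invariant metric, whereas \cite{dk_lie} (and the paper, which records this as Theorem \ref{bochner}) packages the same step as Bochner's Linearization Theorem; your verification of condition 3 of Definition \ref{slice def} via properness, extraction of a limit $g_0\in G_{x_0}$, and injectivity of the local diffeomorphism $F(\xi,w)=\exp_G(\xi)\cdot\exp_{x_0}(w)$ near $(0,0)$ is the standard tube argument. The proposal is correct.
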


The following theorem draws on the results above to show that, in effect, we can locally about a point $x_0$ in $M$ view any proper action of a Lie group $G$ on said manifold $M$ as the compact, linear tangent action of $G_{x_0}$ on $T_{x_0}S$, thereby simplifying our study of the internal tangent space (see chapter \ref{sec_its_structure}). Before stating this result, we explain the central notation and actions involved.

Let $X$ and $Y$ be smooth manifolds and $H$ a Lie group action acting on both $X$ and $Y$ (the actions may be distinct). Assume the action of $h\in H$ on $X$ is proper and free, and denote it by $x\mapsto x\cdot h^{-1}$. Denote the action of $H$ on $Y$ by $y\mapsto h\cdot y$. Due to Theorem 1.11.4 of \cite{dk_lie}, the orbit space $X/H$ is a manifold. The action of $H$ of $X \times Y$, defined by:

$$(h,(x,y))\mapsto (x\cdot h^{-1}, h\cdot y), \ \ \ h\in H, (x,y)\in X\times Y$$

\noindent is proper and free as well (see \cite{dk_lie} section $2.4$). Therefore, taking $[x,y]$ to be the orbit of $(x,y)$ in $X\times Y$ under the action of $H$, the quotient space is a smooth manifold, denoted:

$$X\times_H Y = \{[x,y] \ | \ (x,y)\in X\times Y\}$$

Now let $G$ be another Lie group which acts on $X$, such that furthermore its action \textit{commutes} with the action of $H$ on $X$ given above. In this case, the action $(g,(x,y)) \mapsto (g\cdot x,y)$ of $G$ on $X\times Y$ also commutes with the action of $H$ on $X\times Y$, so its action on $X\times_H Y$ is well-defined (again, see \cite{dk_lie} section $2.4$).

In Theorem \ref{tube}, below, we consider the specific case when $H$ is a closed Lie subgroup of $G$, $X=G$, $H$ acts on $G$ via right multiplication, and $G$ acts on $G$ via left multiplication. Note that $G$ acts transitively on $G/H$ in this case.

\begin{thm}[\cite{dk_lie} Theorem 2.4.1; Equivariant Tube Theorem] \label{tube}
    Let $A$ be a proper action of a Lie group $G$ on a manifold $M$. About any point $x_0\in M$, there exists a $G$-invariant open neighborhood $U$ of $x_0$ such that the $G$-action on $U$ is equivalent to the action of $G$ on $G\times_{G_{x_0}}B$. Here, $B$ is an open $G_{x_0}$-invariant neighborhood of $0$ in $T_{x_0}M / ((A_{x_0})_*|_{1})(\gf)$, on which the compact group $G_{x_0}$ acts linearly, via the tangent action of $(A(k))_*|_{x_0}$ (for $k\in G_{x_0}$) modulo $((A_{x_0})_*|_{1})(\gf)$.
\end{thm}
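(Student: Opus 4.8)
The plan is to build Theorem~\ref{tube} out of the Slice Theorem (Theorem~\ref{slice}) and Bochner's Linearization Theorem (Theorem~\ref{bochner}), glued together by the orbit map $[g,s]\mapsto g\cdot s$. Since the action $A$ is proper, the stabilizer $G_{x_0}$ is compact. Applying the Slice Theorem gives a slice $S$ through $x_0$; by property 2 of Definition~\ref{slice def}, the compact group $G_{x_0}$ acts on $S$ and fixes $x_0$, so Bochner's Linearization Theorem applies to this action (with the manifold there taken to be $S$). It yields a $G_{x_0}$-invariant open neighborhood $U'\subseteq S$ of $x_0$ and a $G_{x_0}$-equivariant diffeomorphism $\chi:U'\ra B_0$ onto a $G_{x_0}$-invariant open neighborhood $B_0$ of $0$ in $T_{x_0}S$, where $G_{x_0}$ acts linearly on $T_{x_0}S$ via the tangent action.

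Next I would identify this linear model with the one named in the statement. Property 1 of Definition~\ref{slice def} provides the direct sum $T_{x_0}M=(A_{x_0})_*|_{1}(\gf)\oplus T_{x_0}S$. The tangent action of $G_{x_0}$ on $T_{x_0}M$ preserves $T_{x_0}(G\cdot x_0)=(A_{x_0})_*|_{1}(\gf)$, and, because $S$ is $G_{x_0}$-invariant, it also preserves $T_{x_0}S$; hence the quotient projection restricts to a $G_{x_0}$-equivariant linear isomorphism $T_{x_0}S\cong T_{x_0}M/((A_{x_0})_*|_{1})(\gf)$ intertwining the tangent action on $T_{x_0}S$ with the claimed ``tangent action modulo $((A_{x_0})_*|_{1})(\gf)$.'' Transporting $B_0$ across this isomorphism defines the neighborhood $B$ of the statement, and $\chi$ becomes an equivalence of $G_{x_0}$-manifolds $U'\cong B$.

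It then remains to pass from the $G_{x_0}$-action on the slice to the $G$-action near $x_0$. I would introduce $\psi:G\times_{G_{x_0}}U'\ra M$, $\psi([g,s])=A(g)(s)$. It is well defined since $A(gh^{-1})(h\cdot s)=A(g)(s)$ for $h\in G_{x_0}$, and $G$-equivariant by associativity of the action. Injectivity is exactly property 3 of Definition~\ref{slice def}: if $A(g_1)(s_1)=A(g_2)(s_2)$ with $s_i\in U'\subseteq S$, then $g_2^{-1}g_1$ carries $s_1\in S$ back into $S$, so $g_2^{-1}g_1\in G_{x_0}$ and $[g_1,s_1]=[g_2,s_2]$. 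For the differential, at $[1,x_0]$ the image of $d\psi$ is $(A_{x_0})_*|_{1}(\gf)+T_{x_0}S=T_{x_0}M$ by property 1; since the kernel of $(A_{x_0})_*|_{1}$ is the Lie algebra of $G_{x_0}$, the source and target of $\psi$ have equal dimension, so $\psi$ is a local diffeomorphism at $[1,x_0]$, hence (by $G$-equivariance) at every $[g,x_0]$. Being a local diffeomorphism is an open condition and $G_{x_0}$ is compact, so I can shrink $U'$ to a smaller $G_{x_0}$-invariant open neighborhood of $x_0$ on which $\psi$ is everywhere a local diffeomorphism --- here the weaker sum $T_xM=(A_x)_*|_{1}(\gf)+T_xS$ for $x\in S$ in property 1 is what keeps $d\psi$ surjective near the orbit. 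An injective local diffeomorphism is a diffeomorphism onto its image, and $U:=\psi(G\times_{G_{x_0}}U')$ is open (local diffeomorphisms are open maps) and $G$-invariant (it is the $G$-saturation of $U'$). Finally, composing $\psi$ with the diffeomorphism $G\times_{G_{x_0}}U'\ra G\times_{G_{x_0}}B$ induced by $\chi$ produces the asserted equivalence between the $G$-action on $U$ and the $G$-action on $G\times_{G_{x_0}}B$.

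The main obstacle I anticipate is the differential-and-shrinking step: one must verify that $d\psi$ remains an isomorphism not merely along the orbit of $[1,x_0]$ but on a full neighborhood, reconciling the \emph{non-direct} sum $T_xM=(A_x)_*|_{1}(\gf)+T_xS$ at nearby $x\in S$ with the dimension bookkeeping (which itself rests on identifying the kernel of $(A_{x_0})_*|_{1}$ with the Lie algebra of $G_{x_0}$), and then carrying out this shrinking compatibly with $G_{x_0}$-invariance so that Bochner's model on the slice survives intact. A secondary point needing care is the equivariant matching in the second paragraph: checking that the linear $G_{x_0}$-action Bochner produces on $B$ really is the tangent action modulo $((A_{x_0})_*|_{1})(\gf)$, which is where $G_{x_0}$-invariance of $S$ is used a second time.
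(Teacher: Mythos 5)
This theorem is imported: the paper states it with a citation to \cite{dk_lie} and gives no proof of its own, so there is nothing internal to compare against. Your reconstruction is correct and is essentially the standard argument from the cited reference: slice $\Rightarrow$ tube via the orbit map $[g,s]\mapsto g\cdot s$, with Bochner linearizing the compact $G_{x_0}$-action on the slice and the direct-sum axiom identifying $T_{x_0}S$ with $T_{x_0}M/((A_{x_0})_*|_1)(\gf)$ equivariantly. All the individual steps check out: well-definedness and injectivity of $\psi$ follow exactly from slice property 3 (which holds on all of $S$, so injectivity is global, not merely local); the dimension count via $\ker((A_{x_0})_*|_1)=\mathrm{Lie}(G_{x_0})$ is right; and descending $\mathrm{id}\times\chi$ to the twisted products uses precisely the $G_{x_0}$-equivariance of $\chi$.

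The one place you overestimate the difficulty is the ``differential-and-shrinking'' step you flag as the main obstacle. Slice property 1 in Definition \ref{slice def} asserts $T_xM=(A_x)_*|_1(\gf)+T_xS$ for \emph{every} $x\in S$, so $d\psi$ is surjective at every point $[1,s]$, hence (equal dimensions, plus $G$-equivariance) an isomorphism everywhere on $G\times_{G_{x_0}}S$ --- no shrinking is needed to make $\psi$ a local diffeomorphism. If one does want to shrink (e.g.\ to the Bochner neighborhood $U'$), the clean way is to observe that the locus where $d\psi$ is invertible is open and $G$-invariant, so its trace $\{s\in S : [1,s]\in W\}$ is automatically an open $G_{x_0}$-invariant neighborhood of $x_0$; compactness of $G_{x_0}$ is not what does the work there. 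Your ``secondary point'' about matching Bochner's linear action with the isotropy action on the quotient is handled exactly as you say, and is the same identification the paper itself records in Proposition \ref{slice to isotropy}.
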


Recall that it can be shown that $(A_{x_0})_*|_{1}(\gf)=T_{x_0}(G\cdot x_0)$. We utilize this going forward. There are some important structural aspects of the tangent space $T_{x_0}M$ of a manifold $M$ at a point $x_0$ and the action on it by the tangent action of the compact stabilizer $G_{x_0}$. We address below the salient points, and then indicate how they relate to slices, Theorem \ref{slice}, and Theorem \ref{tube}. Throughout the below material, we consider a point $x_0$ of a manifold $M$ of dimension $n$ acted upon by a proper Lie group action of a group $G$ with stabilizer at $x_0$ denoted $G_{x_0}$. The action of $G_{x_0}$ is the relevant tangent action until noted. 

As discussed in \cite{dk_lie}, in particular pages $96-97$, from an arbitrary inner product on $T_{x_0}M$ one can always construct an inner product which is invariant under the tangent action of $G_{x_0}$.

It is shown in the proof of Theorem \ref{slice} from \cite{dk_lie} that the subspace $T_{x_0}(G\cdot x_0)$ is invariant under the tangent action by $G_{x_0}$, hence it is an easy exercise to show that its orthogonal complement $T_{x_0}(G\cdot x_0)^{\perp}$ is as well. This uses the invariant inner product guaranteed above.

Further, it can be shown in this case--where $G_x$ is compact and acts linearly on $T_xM$ with given invariant inner product--that $T_xM$ can be identified with Euclidean space $\rl^n$ with its Euclidean inner product on which $G_x$ acts orthogonally. Therefore, we shall identify $T_xM$ and its $G_x$-invariant subspaces $T_{x_0}(G\cdot x_0)$ and $T_{x_0}(G\cdot x_0)^{\perp}$ with the Euclidean space $\rl^k$ (for the appropriate $k$ in each case) in the work below.

It also follows from the proof of Theorem \ref{slice} given in \cite{dk_lie} that the slice $S$ to a point $x_0\in M$ corresponding to the proper action of a Lie group is $G_{x_0}$-equivariantly diffeomorphic to $T_{x_0}(G\cdot x_0)^{\perp}$ (here the action of $G_{x_0}$ on $S$ is that of a subgroup of $G$, and on $T_{x_0}(G\cdot x_0)^{\perp}$ is the tangent action on the invariant subspace). Indeed, directly from the proof we have that our slice $S$ is $G_{x_0}$-equivariantly diffeomorphic to $V\cap T_{x_0}(G\cdot x_0)^{\perp}$ where $V$ is a ball of some fixed radius $\epsilon$ about $0$ in $\rl^n$ and $T_{x_0}(G\cdot x_0)^{\perp} = \rl^k$ is a subspace of $\rl^n$. There is, however, a natural $G_{x_0}$-equivariant diffeomorphism from $V \cap T_{x_0}(G\cdot x_0)^{\perp}$  to $\rl^k = T_{x_0}(G\cdot x_0)^{\perp}$ given by

$$\kappa(x)=\frac{1}{\epsilon^2-||x||^2}x$$

\noindent in light of the fact that $||\cdot ||$, the norm induced from the invariant inner product above, is invariant under the action.

Furthermore, $T_{x_0}(G\cdot x_0)^{\perp}$ can be $G_{x_0}$-equivariantly identified with $T_{x_0}M/T_{x_0}(G\cdot x_0)$. Indeed, given an element $v^{\perp}\in T_{x_0}(G\cdot x_0)^{\perp}$, the mapping $\omega$ defined by $\omega(v^{\perp})=[v^{\perp}]\in T_{x_0}M/T_{x_0}(G\cdot x_0)$ is a $G_{x_0}$-invariant mapping due to the invariance of $T_{x_0}(G\cdot x_0)$ and $T_{x_0}(G\cdot x_0)^{\perp}$ under the tangent action. When the $G_{x_0}$-action on $T_{x_0}M$ has descended to the quotient, as here where $h\cdot[v^{\perp}]:=[h\cdot v^{\perp}]$ (for $h\in G_{x_0}$), we sometimes refer to it as the \textit{isotropy action}. We record the conclusion of this discussion below.

\begin{prop}\label{slice to isotropy}
    Let $x_0$ be a point of a manifold $M$ of dimension $n$ acted upon by a proper Lie group action of a group $G$ with stabilizer at $x_0$ denoted $G_{x_0}$ and with slice $S$ through $x_0$ as in Theorem \ref{slice}. In the language of Definition \ref{equivalence}, $S$, $T_{x_0}(G\cdot x_0)^{\perp}$, and $T_{x_0}M/T_{x_0}(G\cdot x_0)$ are $G_{x_0}$-equivalent. The action on $S$ is the action of $G_{x_0}$ thought of as a subgroup of $G$, the action on $T_{x_0}(G\cdot x_0)^{\perp}$ (thought of as an invariant subspace of $T_{x_0}M$) is the tangent action associated to $G_{x_0}$, and the action on $T_{x_0}M/T_{x_0}(G\cdot x_0)$ is the isotropy action resulting from the tangent action. 
\end{prop}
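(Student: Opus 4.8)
The plan is to exhibit a chain of $G_{x_0}$-equivariant diffeomorphisms linking the three spaces and to compose them; since the inverse of a $G$-equivariant diffeomorphism is $G$-equivariant and a composite of two such maps is again one, ``being $G_{x_0}$-equivalent'' is an equivalence relation, so it suffices to produce $G_{x_0}$-equivariant diffeomorphisms $S \to T_{x_0}(G\cdot x_0)^{\perp}$ and $T_{x_0}(G\cdot x_0)^{\perp} \to T_{x_0}M/T_{x_0}(G\cdot x_0)$ that intertwine the named actions. First I would fix a $G_{x_0}$-invariant inner product on $T_{x_0}M$, obtained by averaging an arbitrary one over the compact group $G_{x_0}$ against normalized Haar measure (\cite{dk_lie}, pp.~96--97), and use it to identify $T_{x_0}M$ with $\rl^n$ carrying its Euclidean inner product, with $G_{x_0}$ acting linearly and orthogonally via the tangent action. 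Since $T_{x_0}(G\cdot x_0) = (A_{x_0})_*|_{1}(\gf)$ is invariant under this action (from the proof of Theorem \ref{slice}), so is $W := T_{x_0}(G\cdot x_0)^{\perp}$, and $T_{x_0}M = T_{x_0}(G\cdot x_0)\oplus W$ as $G_{x_0}$-representations.

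For the first map I would invoke the proof of the Slice Theorem (Theorem \ref{slice}), which yields a $G_{x_0}$-equivariant diffeomorphism of $S$ onto $V\cap W$, where $V$ is the open ball of some radius $\epsilon>0$ about $0$ in $\rl^n \cong T_{x_0}M$; this intersection is simply the open $\epsilon$-ball of $W$. I would then postcompose with the radial map $\kappa(x) = (\epsilon^2 - \|x\|^2)^{-1}x$ and check: (i) $\kappa$ sends $V\cap W$ into $W$, being a scalar multiple of $x\in W$ with positive denominator there; (ii) $\kappa$ is a diffeomorphism onto $W$, its inverse being the radial map $y \mapsto t(\|y\|^2)\,y$ with $t(u) = (-1 + \sqrt{1 + 4\epsilon^2 u})/(2u)$, which is a smooth function of $u \geq 0$ near $0$ with $t(0) = \epsilon^2$; and (iii) $\kappa$ is $G_{x_0}$-equivariant, because the action is linear and orthogonal, so $\|h\cdot x\| = \|x\|$ and hence $\kappa(h\cdot x) = h\cdot\kappa(x)$. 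This realizes $S$, with the $G_{x_0}$-action as a subgroup of $G$, as $G_{x_0}$-equivalent to $W = T_{x_0}(G\cdot x_0)^{\perp}$ carrying the tangent action on the invariant subspace.

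For the second map I would take $\omega\colon W \hookrightarrow T_{x_0}M \twoheadrightarrow T_{x_0}M/T_{x_0}(G\cdot x_0)$, $\omega(v^{\perp}) = [v^{\perp}]$. Invariance of $T_{x_0}(G\cdot x_0)$ lets the tangent action descend to the quotient via $h\cdot[v] := [h\cdot v]$, which is by definition the isotropy action, and with this definition $\omega$ is tautologically $G_{x_0}$-equivariant. Moreover $\omega$ is a linear isomorphism — hence a diffeomorphism of the underlying manifolds — precisely because $T_{x_0}M = T_{x_0}(G\cdot x_0)\oplus W$ forces it to be injective with full image. Composing the two equivariant diffeomorphisms and appealing to symmetry and transitivity of $G_{x_0}$-equivalence then gives the proposition.

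The main obstacle, and essentially the only point requiring care beyond bookkeeping, is extracting from the proof of the Slice Theorem the precise statement that $S$ is $G_{x_0}$-equivariantly diffeomorphic to $V\cap W$ (rather than merely to some diffeomorphic neighborhood of $0$ in $W$), together with the minor check that $\kappa^{-1}$ is smooth at the origin; everything else amounts to verifying that at each stage of the chain the action carried is exactly the one named in the statement.
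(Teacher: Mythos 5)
Your proposal is correct and follows essentially the same route as the paper: an averaged $G_{x_0}$-invariant inner product, the equivariant identification of $S$ with the $\epsilon$-ball $V\cap T_{x_0}(G\cdot x_0)^{\perp}$ from the proof of the Slice Theorem, the radial map $\kappa(x)=(\epsilon^2-\|x\|^2)^{-1}x$ onto the full complement, and the linear isomorphism $v^{\perp}\mapsto[v^{\perp}]$ onto the quotient carrying the isotropy action. Your explicit verification that $\kappa^{-1}$ is smooth at the origin is a detail the paper leaves implicit, but the argument is otherwise the same.
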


One of the implications of this as related to Theorem \ref{tube} and important in the following chapter is that for our purposes we can and shall take $B$, the open $G_{x_0}$-invariant neighborhood of zero in $T_{x_0}M/T_{x_0}(G\cdot x_0)$ described in said theorem, to be $T_{x_0}M/T_{x_0}(G\cdot x_0)$ itself. This is immediate from the proof in \cite{dk_lie}, page $103$, and the above paragraphs.

\chapter[The Structure of the Internal Tangent Space]{\textbf{Chapter IV. The Structure of the Internal Tangent Space}}\label{sec_its_structure}

We are now in a position to describe the general internal tangent space to a point of a quotient of a manifold by a proper Lie group action. The key will be the ability to utilize (especially) Theorem \ref{tube} to reduce this problem to the case of a compact group acting linearly on a suitable space.

Indeed, suppose we are given a manifold $M$ and a proper Lie group action $A$ of a Lie group $G$ on $M$. We wish to determine $T_{[x]}(M / G)$ for an arbitrary $[x]\in M/G$. Note first that, in this case, as vector spaces we have dim$(T_{[x]}(M/G)) \le \text{dim}(T_xM)$.

The key simplification of the actual problem is as follows. By Theorem \ref{tube}, we have a $G$-invariant open neighborhood $U$ of $x_0$ in $M$ such that the $G$-action on $U$ is equivalent to the action of $G$ on $G\times_{G_{x_0}}B$, with $B=T_{x_0}M/T_{x_0}(G\cdot x_0)$. In particular, there is a $G$-equivariant diffeomorphism $\Phi:U \ra G\times_{G_{x_0}}B$. The following lemma will turn this consequence into Corollary \ref{tube quotient}. Note that $\Phi$ is both a diffeomorphism in the sense of smooth manifolds and, immediately, a diffeological diffeomorphism. In this chapter we return to the overall convention that ``diffeomorphism" refers to a diffeological diffeomorphism unless noted otherwise. We begin with the following lemma.

\begin{lem}\label{lem descend}
    Let $X$ and $Y$ be manifolds with their standard diffeologies both acted upon by Lie group actions of a group $G$, and $\phi:X\ra Y$ a $G$-equivariant diffeomorphism between them. Then $\phi$ descends to a $G$-equivariant diffeomorphism on the quotients $\widetilde{\phi} : X/G \ra Y/G$.
\end{lem}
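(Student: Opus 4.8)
The plan is to define $\widetilde{\phi}$ on orbits by $\widetilde{\phi}([x]) := [\phi(x)]$ and then verify the three things this requires: that it is well defined and bijective, that it is smooth in the diffeological sense, and that its inverse is smooth. First I would exploit the $G$-equivariance of $\phi$: since $\phi(g\cdot x) = g\cdot\phi(x)$, the map $\phi$ carries the $G$-orbit of $x$ onto the $G$-orbit of $\phi(x)$, so $\widetilde{\phi}$ is well defined on $X/G$, and because $\phi$ is a bijection carrying orbits bijectively onto orbits, $\widetilde{\phi}$ is a bijection. A one-line computation (put $y = \phi(x)$, so $x = \phi^{-1}(y)$, and apply $\phi^{-1}$ to $\phi(g\cdot\phi^{-1}(y)) = g\cdot y$) shows $\phi^{-1}$ is also $G$-equivariant, hence induces a map $\widetilde{\phi^{-1}}: Y/G \ra X/G$ that is a two-sided inverse of $\widetilde{\phi}$. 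By construction we obtain the commuting square $\widetilde{\phi}\circ\pi_X = \pi_Y\circ\phi$, where $\pi_X$ and $\pi_Y$ denote the quotient projections.

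For smoothness, I would take an arbitrary plot $\ol{p}: W \ra X/G$ of the quotient diffeology and show $\widetilde{\phi}\circ\ol{p}$ is a plot of $Y/G$. By the locality axiom of Definition \ref{diff def} it suffices to check this in a neighborhood of each point of $W$; there, by the description of the quotient diffeology, $\ol{p} = \pi_X\circ p$ for a plot $p: V \ra X$ of the standard diffeology on $X$, i.e.\ an ordinary smooth map into the manifold $X$. On $V$ we then have $\widetilde{\phi}\circ\ol{p} = \widetilde{\phi}\circ\pi_X\circ p = \pi_Y\circ(\phi\circ p)$, and $\phi\circ p$ is again an ordinary smooth map into $Y$, hence a plot of $Y$; therefore $\pi_Y\circ(\phi\circ p)$ is a plot of $Y/G$ by definition of the quotient diffeology. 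Thus $\widetilde{\phi}\circ\ol{p}$ locally factors through $\pi_Y$, so it is a plot, and $\widetilde{\phi}$ is smooth. Running the identical argument with $\phi^{-1}$ in place of $\phi$ shows $\widetilde{\phi}^{-1} = \widetilde{\phi^{-1}}$ is smooth, so $\widetilde{\phi}$ is a diffeological diffeomorphism.

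Finally, for the $G$-equivariance of $\widetilde{\phi}$ on the quotients: the induced $G$-actions on $X/G$ and $Y/G$ are trivial, since $g\cdot[x] = [g\cdot x] = [x]$ and similarly on $Y/G$, so $\widetilde{\phi}(g\cdot[x]) = \widetilde{\phi}([x]) = g\cdot\widetilde{\phi}([x])$ holds automatically. This completes the argument.

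I do not anticipate a serious obstacle here; the only point requiring care is to invoke the locality axiom together with the ``locally factors through $\pi$'' characterization of the quotient diffeology, rather than attempting to write $\ol{p} = \pi_X\circ p$ globally (which in general fails). The passage between the standard diffeology on a manifold and ordinary smoothness is what makes $\phi\circ p$ (and, for the inverse, $\phi^{-1}\circ p$) smooth, and this is the one place where it is used that $\phi$ is a diffeomorphism of manifolds rather than merely a smooth bijection.
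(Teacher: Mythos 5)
Your proposal is correct and follows essentially the same route as the paper's proof: define $\widetilde{\phi}([x])=[\phi(x)]$, use $G$-equivariance for well-definedness, verify smoothness by locally factoring a plot of $X/G$ through $\pi_X$ and pushing forward through $\phi$ before invoking the locality axiom, and repeat for the inverse. The only cosmetic difference is that the paper separately checks that the local lift $p$ is immaterial (if $\pi_X\circ p=\pi_X\circ p'$ then $[\phi\circ p]=[\phi\circ p']$ by equivariance), whereas your commuting-square formulation $\widetilde{\phi}\circ\pi_X=\pi_Y\circ\phi$ makes that automatic.
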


\begin{proof}
    Because $\phi$ is $G$-equivariant, it maps $G$-orbits onto $G$-orbits and so descends to a map between the quotients, denoted $\widetilde{\phi}$, where $\widetilde{\phi}([x])=[\phi(x)]$, for $x\in X$. It remains to show this map is a diffeomorphism.
    
    First, we show the map is smooth. Given a plot $\ol{p}(t)$ in the quotient diffeology of $X/G$, we claim that $\widetilde{\phi}(\ol{p}(t))$ is a plot in the quotient diffeology of $Y/G$. Indeed, locally about any point $t_0$ in its domain, we have $\ol{p}(t)=\pi(p(t))=[p(t)]$ for $p$ a plot in the diffeology on $X$ and $\pi$ the canonical projection. Thus, for $t$ in a neighborhood of $t_0$, 
    
    $$\widetilde{\phi}(\ol{p}(t))=\widetilde{\phi}([p(t)])=[\phi(p(t))]$$

    If $p'$ is another plot in the diffeology on $X$ such that $\ol{p}(t)=\pi(p'(t))=[p'(t)]$ near $t_0$, we also have for $t$ in a neighborhood of $t_0$

    $$\widetilde{\phi}(\ol{p}(t))=\widetilde{\phi}([p'(t)])=[\phi(p'(t))]$$

    However, because $\ol{p}(t)=[p(t)]=[p'(t)]$ as germs about $t_0$ and $\phi$ is $G$-equivariant, we have that $[\phi(p(t))]=[\phi(p'(t))]$ in another, perhaps smaller neighborhood of $t_0$. Hence, the map $\widetilde{\phi}(\ol{p}(t))$ has an unambiguous meaning locally. Because $\phi(p(t))$ is a plot in $Y$ due to the smoothness of $\phi$, meaning $\pi(\phi(p(t)))=[\phi(p(t))]$ is a plot in the quotient diffeology for $Y/G$, and because $\widetilde{\phi}(\ol{p}(t))=[\phi(p(t))]$ in a neighborhood of $t_0$, by the locality condition of diffeologies (item $3$ in Definition \ref{diff def}) $\widetilde{\phi}(\ol{p}(t))$ is globally a plot in the quotient diffeology of $Y/G$ as well (since $t_0$ was an arbitrary domain point). Thus, $\widetilde{\phi}$ maps plots to plots and so is smooth.

    Further, since the map $\phi$ is an equivariant diffeomorphism, its inverse $\phi^{-1}$ also descends to the quotients, now denoted $\widetilde{\phi}^{-1}$, sending $\widetilde{\phi}^{-1}([y])=[\phi^{-1}(y)]$ for $y\in Y$. This map is likewise smooth and is still an inverse map, as

    $$\widetilde{\phi}^{-1}(\widetilde{\phi}([x]))=[\phi^{-1}(\phi(x))]=[x]$$
    $$\widetilde{\phi}(\widetilde{\phi}^{-1}([y]))=[\phi(\phi^{-1}(y))]=[y]$$

    Thus $\widetilde{\phi}$ is a diffeomorphism $X/G \ra Y/G$.
\end{proof}

This leads immediately to the following.

\begin{cor}\label{tube quotient}
    The $G$-equivarant diffeomorphism $\Phi: U \ra G\times_{G_{x_0}}B$ given by Theorem \ref{tube} descends to a $G$-equivariant diffeomorphism on the quotients:
    $$\widetilde{\Phi} : U/G\ra (G\times_{G_{x_0}}B)/G$$
\end{cor}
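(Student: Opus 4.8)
The plan is to apply Lemma \ref{lem descend} directly. The lemma takes as input two manifolds $X$ and $Y$ with their standard diffeologies, each carrying a $G$-action, and a $G$-equivariant diffeomorphism $\phi : X \to Y$ between them, and produces a $G$-equivariant diffeomorphism $\widetilde{\phi} : X/G \to Y/G$ of the quotients (with their quotient diffeologies). So the only work is to check that the data furnished by Theorem \ref{tube} fits these hypotheses, and then to invoke the lemma with the appropriate substitutions.

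First I would set $X = U$ and $Y = G \times_{G_{x_0}} B$, where $B = T_{x_0}M / T_{x_0}(G\cdot x_0)$ as arranged in the discussion following Proposition \ref{slice to isotropy}. I need to observe that both are manifolds: $U$ is a $G$-invariant open subset of the manifold $M$, hence a manifold in its own right; and $G \times_{G_{x_0}} B$ is a manifold by the discussion preceding Theorem \ref{tube} (it is the quotient of the manifold $G \times B$ by the proper free action of the compact subgroup $G_{x_0}$, using Theorem 1.11.4 of \cite{dk_lie}). Both carry $G$-actions — the restriction of $A$ to $U$, and the action of $G$ on $G \times_{G_{x_0}} B$ induced by left multiplication on the first factor — and Theorem \ref{tube} asserts precisely that $\Phi : U \to G \times_{G_{x_0}} B$ is a $G$-equivariant diffeomorphism of manifolds. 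As already noted in the paragraph preceding Lemma \ref{lem descend}, a diffeomorphism of manifolds is automatically a diffeological diffeomorphism once both manifolds are given their standard diffeologies, so $\Phi$ is $G$-equivariant in the diffeological sense as well.

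Having matched all the hypotheses, I would apply Lemma \ref{lem descend} to $\phi = \Phi$ to conclude that $\Phi$ descends to a $G$-equivariant diffeomorphism $\widetilde{\Phi}([x]) = [\Phi(x)]$ on the quotients, $\widetilde{\Phi} : U/G \to (G \times_{G_{x_0}} B)/G$, which is exactly the assertion of the corollary. This really is immediate: there is no genuine obstacle, since all the substantive content — the construction of $\Phi$, the manifold structure on $G \times_{G_{x_0}} B$, and the descent-to-quotients argument with its well-definedness check — has already been carried out in Theorem \ref{tube}, the preceding discussion, and Lemma \ref{lem descend} respectively. If anything, the only point meriting a sentence is the verification that $G \times_{G_{x_0}} B$ is indeed a manifold carrying a $G$-action and that $\Phi$ respects these actions, so that Lemma \ref{lem descend} genuinely applies; but this is recorded verbatim in the statement of Theorem \ref{tube}.
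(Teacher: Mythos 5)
Your proposal is correct and matches the paper exactly: the corollary is presented there as an immediate consequence of Lemma \ref{lem descend} applied to the $G$-equivariant diffeomorphism $\Phi$ from Theorem \ref{tube}, which is precisely your argument. Your additional sentence verifying that $U$ and $G\times_{G_{x_0}}B$ are manifolds with $G$-actions so that the lemma's hypotheses hold is a reasonable (if brief) elaboration of what the paper leaves implicit.
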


Now, we link the orbit space $M/G$ which is the focus of our study to a space which has an oftentimes nicer collection of internal tangent spaces.

\begin{thm}\label{local linear compact}
    Let $M$ be a manifold and $G$ a Lie group acting properly on $M$. Given a point $x_0\in M$, with stabilizer $G_{x_0}$ under the action, the following spaces are diffeomorphic, where $\widetilde{U} \subseteq M/G$ is the neighborhood $U/G$ of $[x_0]\in M/G$ coming from Corollary \ref{tube quotient}, $G\times_{G_{x_0}}B$ is as described in and preceding Theorem \ref{tube}, $B=T_{x_0}M/T_{x_0}(G\cdot x_0)$ as described following Theorem \ref{tube}, and $S$ is a slice through $x_0$ in M guaranteed by Theorem \ref{slice}.
    $$M/G \supseteq \widetilde{U} \cong (G\times_{G_{x_0}}B)/G \cong B/G_{x_0} \cong T_{x_0}S/G_{x_0}$$
    \noindent where the action on $G\times_{G_{x_0}}B$ is multiplication of the first component by $G$, the action on $B$ is the isotropy action induced by the tangent action of $G_{x_0}$, and the action on $T_{x_0}S=T_0(T_{x_0}(G\cdot x_0)^{\perp})=T_{x_0}(G\cdot x_0)^{\perp}$ is the tangent action of $G_{x_0}$.
\end{thm}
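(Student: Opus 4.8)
The plan is to verify the three diffeomorphisms in the chain one at a time. The outer two links are essentially consequences of results already in hand. The leftmost, $\widetilde{U} = U/G \cong (G\times_{G_{x_0}}B)/G$, is Corollary \ref{tube quotient}: Theorem \ref{tube} supplies a $G$-equivariant diffeomorphism $\Phi: U\to G\times_{G_{x_0}}B$ (with $B = T_{x_0}M/T_{x_0}(G\cdot x_0)$, as justified following Theorem \ref{tube} and by Proposition \ref{slice to isotropy}), which descends to $\widetilde{\Phi}: U/G\to(G\times_{G_{x_0}}B)/G$; one also notes that the quotient diffeology $U/G$ carries as a quotient of $U$ coincides with the subset diffeology it inherits from $M/G$, since $U$ is $G$-invariant and $D$-open. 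The rightmost, $B/G_{x_0}\cong T_{x_0}S/G_{x_0}$, follows from Proposition \ref{slice to isotropy}, which exhibits $B$ (with the isotropy action) and $T_{x_0}(G\cdot x_0)^\perp$ (with the tangent action) as $G_{x_0}$-equivalent and identifies $T_{x_0}(G\cdot x_0)^\perp$ with $T_{x_0}S$; since these are manifolds, Lemma \ref{lem descend} descends the $G_{x_0}$-equivariant diffeomorphism between $B$ and $T_{x_0}S$ to one of their quotients.

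The real content is the middle link, $(G\times_{G_{x_0}}B)/G\cong B/G_{x_0}$. Write $[g,b]$ for the class of $(g,b)$ in $G\times_{G_{x_0}}B$, so that $[g,b] = [gk^{-1}, kb]$ for $k\in G_{x_0}$, and write $G\cdot[g,b]$ for its orbit under the $G$-action (left multiplication on the first factor). I would define $\alpha: (G\times_{G_{x_0}}B)/G\to B/G_{x_0}$ by $\alpha(G\cdot[g,b]) = [b]_{G_{x_0}}$ and $\beta: B/G_{x_0}\to(G\times_{G_{x_0}}B)/G$ by $\beta([b]_{G_{x_0}}) = G\cdot[1,b]$. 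That these are well-defined and mutually inverse reduces to the observation that $G\cdot[g_1,b_1] = G\cdot[g_2,b_2]$ if and only if $[b_1]_{G_{x_0}} = [b_2]_{G_{x_0}}$: one direction uses $g^{-1}\cdot[g,b] = [1,b]$, and the key step of the other is $h\cdot[1,b] = [h,b] = [1,hb]$ for $h\in G_{x_0}$.

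It then remains to check that $\alpha$ and $\beta$ are both smooth. For $\beta$: a plot of $B/G_{x_0}$ locally equals $\pi_B\circ p$ with $p$ a plot of $B$, and $\beta\circ\pi_B$ is the composite $b\mapsto(1,b)\mapsto[1,b]\mapsto G\cdot[1,b]$ of the smooth maps $B\to G\times B\to G\times_{G_{x_0}}B\to(G\times_{G_{x_0}}B)/G$, so the locality axiom (item 3 of Definition \ref{diff def}) makes $\beta$ smooth. For $\alpha$: a plot of $(G\times_{G_{x_0}}B)/G$ locally equals $\pi_G\circ q$ with $q$ a plot of the manifold $G\times_{G_{x_0}}B$; because $G\times B\to G\times_{G_{x_0}}B$ is a surjective submersion (the quotient of a manifold by a free proper action), it admits local smooth sections, so $q$ in turn locally lifts to a plot $\widetilde{q}$ of $G\times B$, and then $\alpha\circ\pi_G\circ q$ locally equals $\pi_B\circ\text{pr}_2\circ\widetilde{q}$, a plot of $B/G_{x_0}$; locality again gives that $\alpha$ is smooth.

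The step I expect to be the main obstacle is exactly this last point in the middle link: the smoothness of $\alpha$ rests on lifting plots of $G\times_{G_{x_0}}B$ to $G\times B$, i.e.\ on the fact that the associated-bundle quotient map is a submersion admitting local sections (equivalently, that the quotient diffeology on $G\times_{G_{x_0}}B$ coincides with its standard diffeology as a manifold). Once that is granted, the rest is routine: the orbit bookkeeping above, standard plot-chasing through the locality axiom, and assembling the three links by transitivity of diffeomorphism.
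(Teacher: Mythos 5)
Your proposal is correct and follows essentially the same route as the paper: the same three-link decomposition, with the outer links justified by Corollary \ref{tube quotient} and by Proposition \ref{slice to isotropy} together with Lemma \ref{lem descend}, respectively. For the middle link the paper simply asserts that $(G\times_{G_{x_0}}B)/G$ is diffeomorphic to $B/G_{x_0}$ by the definition of the associated bundle (citing Watts); your explicit $\alpha$/$\beta$ construction and the plot-lifting via local sections of the submersion $G\times B\to G\times_{G_{x_0}}B$ is a correct filling-in of exactly that step.
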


\begin{proof}
    The first isomorphism, $\widetilde{U} \cong (G\times_{G_{x_0}}B)/G$, is Corollary \ref{tube quotient}. For the second isomorphism, observe that $(G\times_{G_{x_0}}B)/G={[1]\times_{G_{x_0}}B}$, which is immediately diffeomorphic to $B/G_{x_0}$ due to the definition of the manifold $G\times_{G_{x_0}}B$ (the second components of plots in the standard diffeology on this manifold are the plots of $B/G_{x_0}$). This step is also found in both \cite{watts_thesis}, Lemma $3.17$, and \cite{watts_paper}, Lemma $5.9$. The last isomorphism follows from Proposition \ref{slice to isotropy} and Lemma \ref{lem descend}, after considering the resulting differentials. This is due to the fact that $T_{x_0}S=T_0(T_{x_0}(G\cdot x_0)^{\perp})$, and that $T_0(T_{x_0}(G\cdot x_0)^{\perp})=T_{x_0}(G\cdot x_0)^{\perp}$.
    
    Note that $[x_0]\in M/G$ is mapped to $[0]$ in $B/G_{x_0}$ and $T_{x_0}S/G_{x_0}$. This follows from Theorem \ref{bochner}, which is used in the proof of Theorem \ref{slice} and, through this, Theorem \ref{tube}.
\end{proof}

The below theorem is the main result of this chapter, though in our work we will employ a corollary of it which utilizes Theorem \ref{local linear compact}.

\begin{thm}\label{main result}
    The internal tangent space to a point is a diffeological invariant. That is, if $\chi : X \ra Y$ is a diffeomorphism mapping a point $x$ to $y=\chi(x)$, then $T_x(X) \cong T_y(Y)$.
\end{thm}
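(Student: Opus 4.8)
The plan is to show that the internal tangent space construction is functorial, and then obtain the statement as a formal consequence. Concretely, every smooth map $f : X \ra Y$ of diffeological spaces will be shown to induce a linear map $f_* : T_x(X) \ra T_{f(x)}(Y)$; the assignment $f \mapsto f_*$ will be checked to respect composition and identities; and a diffeomorphism together with its inverse then forces $\chi_*$ to be an isomorphism.

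First I would construct $f_*$. Given a smooth $f : X \ra Y$ and a pointed plot $p : (U_p, 0) \ra (X, x)$, the composite $f \circ p$ is again a pointed plot of $Y$: its domain $U_p$ is connected and contains $0$, we have $(f\circ p)(0) = f(x)$, and $f\circ p \in \dc_Y$ by smoothness of $f$. Define a linear map on the free objects $F_X = \bigoplus_p T_0(U_p) \ra F_Y$ by sending the summand indexed by $p$ identically into the summand indexed by $f\circ p$; that is, $(p,v) \mapsto (f\circ p, v)$, extended linearly. This is well posed by the universal property of the direct sum even when $f$ identifies distinct plots of $X$, since in that case the images are simply added together in the common summand of $F_Y$.

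Next I would check that this map descends to the quotients, i.e. that $R_X$ is carried into $R_Y$. It suffices to verify this on a basic relation $(p,v) - (q, g_*(v))$, where $g : (U_p,0) \ra (U_q,0)$ is a germ of smooth maps with $p = q\circ g$ as germs. Applying the map yields $(f\circ p, v) - (f\circ q, g_*(v))$; since $f\circ p = f\circ(q\circ g) = (f\circ q)\circ g$ as germs, this is itself a basic relation of $R_Y$, associated to the pointed plots $f\circ p,\ f\circ q$ and the germ $g$. Hence $R_X$ maps into $R_Y$, and we obtain a well-defined linear map $f_* : T_x(X) \ra T_{f(x)}(Y)$ with $f_*(p,v) = (f\circ p, v)$.

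Finally I would establish functoriality and conclude. On generators $(\mathrm{id}_X)_*(p,v) = (p,v)$, so $(\mathrm{id}_X)_* = \mathrm{id}_{T_x(X)}$; and for smooth $f : X \ra Y$, $h : Y \ra Z$ one has $(h\circ f)_*(p,v) = (h\circ f\circ p, v) = h_*(f_*(p,v))$, so $(h\circ f)_* = h_*\circ f_*$. If now $\chi : X \ra Y$ is a diffeomorphism, its inverse $\chi^{-1}$ is smooth, and $(\chi^{-1})_*\circ \chi_* = (\chi^{-1}\circ\chi)_* = (\mathrm{id}_X)_* = \mathrm{id}_{T_x(X)}$, while $\chi_*\circ(\chi^{-1})_* = \mathrm{id}_{T_{\chi(x)}(Y)}$; thus $\chi_*$ is a linear isomorphism $T_x(X) \cong T_{\chi(x)}(Y)$. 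I do not expect a serious obstacle here: the only point requiring care is the bookkeeping of the previous paragraphs, namely confirming that the induced map on the direct sums respects the summand structure when $f$ collapses distinct plots, and that a basic relation is sent to an actual basic relation (not merely to an element of the span) so that $R_X$ lands in $R_Y$. Both are routine once the indexing is set up carefully.
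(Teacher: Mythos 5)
Your proposal is correct, and it takes a genuinely different (more structural) route than the paper. The paper's proof is specific to the diffeomorphism: it uses the bijection $p \mapsto \chi\circ p$ between pointed plots of $X$ and of $Y$ (which preserves domains) to identify $F$ with $F'$ outright, and then runs an ``if and only if'' chain -- invoking $\chi^{-1}$ at each step -- to show that the basic relations of $R$ correspond exactly to those of $R'$, whence $F/R\cong F'/R'$. You instead establish the stronger and reusable fact that $T$ is functorial: every smooth $f:X\ra Y$ induces a linear $f_*:T_x(X)\ra T_{f(x)}(Y)$ via $(p,v)\mapsto(f\circ p,v)$, and only one inclusion $f_*(R_X)\subseteq R_Y$ is needed per map, with the two-sided conclusion for a diffeomorphism falling out formally from $(\chi^{-1})_*\circ\chi_*=(\mathrm{id}_X)_*$ and its counterpart. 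The underlying computation is the same in both arguments -- composing plots with the map preserves the factorization $p=q\circ g$, hence sends basic relations to basic relations -- but your packaging buys a pushforward along arbitrary smooth maps (the standard construction in the tangent-space literature), while the paper's version is shorter and avoids having to set up the induced map on direct sums when $f$ collapses distinct plots into a common summand, a point you correctly flag and handle via the universal property. Both proofs are sound.
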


\begin{proof}
    Recalling Definition \ref{def 3.1}, suppose $T_x(X) = F/R$ and $T_y(Y)=F'/R'$, where $F=\oplus_pT_0(U_p)$, $F'=\oplus_{p'}T_0(U_{p'})$, and $R$ and $R'$ consist of the relations spanned by vectors of the form $(p,v)-(q,g_*(v))$ and $(p',v')-(q',g_*(v'))$, respectively.

    As $X$ is diffeomorphic to $Y$, each plot $p\in 
    \dc_X$ the diffeology of $X$ corresponds to a plot $\chi(p)\in \dc_Y$ the diffeology of $Y$ with domain $U_p=U_{\chi(p)}$, and conversely. Therefore, $F=F'$. We now show the basic relations in $R$ correspond exactly with those of $R'$. Indeed, for $p,q\in \dc_X$ forming a basic relation, and their corresponding plots $p'=\chi(p),q'=\chi(q)\in\dc_Y$, we have

    \begin{align*}
        (p,v) - (q,g_*v) \in R &\iff p=q\circ g \\
        &\iff \chi^{-1}\circ p' = \chi^{-1} \circ q' \circ g \\
        &\iff \chi \circ \chi^{-1}\circ p' = \chi \circ \chi^{-1} \circ q' \circ g \\
        &\iff p' = q' \circ g \\
        &\iff (p',v) - (q',g_*v) \in R'
    \end{align*}

    \noindent which tells us a difference of two elements forms a basic relation in $R$ if and only if their corresponding elements form a basic relation in $R'$; thus $R\cong R'$. All equalities on the right hand side are equalities of germs. Thus, $F/R \cong F'/R'$, yielding the desired result.
\end{proof}

The following is an immediate consequence of the above Theorem \ref{main result}, Theorem \ref{local linear compact}, and the locality of the internal tangent space spelled out in Lemma \ref{lem local}.

\begin{cor}\label{main result cor}
    Let $M$ be a manifold acted upon by a proper Lie group $G$ with notation as in Theorem $\ref{local linear compact}$. Then the problem of determining the internal tangent space of such a quotient reduces to the study of a suitable space quotiented by a compact, linear group action. In detail:
    $$T_{[x_0]}(M/G) \cong T_{[0]}(T_{x_0}S/G_{x_0})$$
\end{cor}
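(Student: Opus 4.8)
The plan is to chain together three results already established in the excerpt. The target statement, $T_{[x_0]}(M/G) \cong T_{[0]}(T_{x_0}S/G_{x_0})$, should follow by applying Lemma \ref{lem local} to replace $M/G$ by a suitable $D$-open neighborhood, then Theorem \ref{local linear compact} to identify that neighborhood (up to diffeomorphism) with $T_{x_0}S/G_{x_0}$, and finally Theorem \ref{main result} to transport the internal tangent space across that diffeomorphism, keeping careful track of where the basepoint goes.

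In detail, I would proceed as follows. First, let $\widetilde{U} = U/G \subseteq M/G$ be the neighborhood of $[x_0]$ produced by Corollary \ref{tube quotient} (equivalently, appearing in Theorem \ref{local linear compact}). Since $U$ is $G$-invariant and open in $M$, and the quotient map $\pi : M \to M/G$ is open (or, more to the point, since the $D$-topology on the quotient agrees with the quotient topology by the discussion preceding Example \ref{ex d-top}), the set $\widetilde{U}$ is $D$-open in $M/G$ and contains $[x_0]$. Equipping $\widetilde{U}$ with the subset diffeology, Lemma \ref{lem local} (together with the remark following it, which lets us use openness in the quotient topology) gives $T_{[x_0]}(\widetilde{U}) \cong T_{[x_0]}(M/G)$. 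Next, Theorem \ref{local linear compact} provides a chain of diffeomorphisms $\widetilde{U} \cong (G\times_{G_{x_0}}B)/G \cong B/G_{x_0} \cong T_{x_0}S/G_{x_0}$; composing these gives a single diffeomorphism $\chi : \widetilde{U} \to T_{x_0}S/G_{x_0}$, and by the last sentence of that theorem's proof, $\chi$ sends $[x_0]$ to $[0]$. Then Theorem \ref{main result}, applied to $\chi$ with $x = [x_0]$ and $y = [0]$, yields $T_{[x_0]}(\widetilde{U}) \cong T_{[0]}(T_{x_0}S/G_{x_0})$. Combining the two isomorphisms gives
\[
T_{[x_0]}(M/G) \;\cong\; T_{[x_0]}(\widetilde{U}) \;\cong\; T_{[0]}(T_{x_0}S/G_{x_0}),
\]
which is the claim.

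The only genuinely delicate points are bookkeeping ones rather than conceptual obstacles, since all the hard analytic and geometric content is already packaged into Theorems \ref{bochner}, \ref{slice}, \ref{tube}, \ref{local linear compact}, and \ref{main result}. The first thing to verify carefully is that $\widetilde{U}$ really is $D$-open, so that Lemma \ref{lem local} applies: this needs the identification of the $D$-topology on the quotient with the quotient topology (cited before Example \ref{ex d-top}) plus openness of $U$ in $M$ and the fact that $\pi$ is an open map for this quotient. The second is tracking the basepoint through the composite diffeomorphism of Theorem \ref{local linear compact}; the proof of that theorem already records that $[x_0] \mapsto [0]$, so I would simply cite it, but it is worth stating explicitly since Theorem \ref{main result} is a \emph{pointed} invariance statement. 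Beyond these, the argument is a formal concatenation, so I do not anticipate a substantive obstacle; if anything, the main risk is redundancy, and I would keep the proof short, essentially writing the displayed chain of isomorphisms above with a one-line justification attached to each link.
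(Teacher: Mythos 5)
Your proposal is correct and matches the paper exactly: the paper presents this corollary as an immediate consequence of Lemma \ref{lem local} (locality), Theorem \ref{local linear compact} (the chain of diffeomorphisms sending $[x_0]$ to $[0]$), and Theorem \ref{main result} (diffeological invariance of the internal tangent space), which is precisely the chain you assemble. Your added care about $D$-openness of $\widetilde{U}$ and basepoint tracking is sound bookkeeping that the paper leaves implicit.
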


\begin{rem}
    In light of Theorem \ref{local linear compact} and Proposition \ref{slice to isotropy}, we also have:
    
    $$T_{[x_0]}(M/G) \cong T_{[0]}(B/G_{x_0}) \cong T_{[0]}(T_{x_0}(G\cdot x_0)^{\perp}/G_{x_0})$$

\end{rem}

We have not identified $T_{[0]}(T_{x_0}S/G_{x_0})$ with $T_{x_0}S/G_{x_0}$, because $T_{x_0}S/G_{x_0}$ is not guaranteed to be a manifold, unlike in prior cases where we have made the analogous identification. Consider now the task as we have reduced it: that of determining $T_{[0]}(T_{x_0}S/G_{x_0})$. In general, we cannot further simplify the problem, but reducing the general case to that of a compact, linear Lie group action is significant.

\begin{rem}
    For instance, in the case $\rl / O(1)$ studied above, our slice for the action at $0$ is simply $\rl$ and $G_0 = O(1)$, so our findings do not make our earlier work obsolete.
    
    In general, there is ``elbow grease" required to precisely determine the internal tangent space to a point of the orbit space of a manifold acted upon by a proper Lie group action, but as the below material helps to show, our findings proactively make the process more manageable and help to link our work to the study of stratified spaces.
\end{rem}

\begin{ex}\label{ex r3}
We now work through an example where the results of this chapter are gainfully used. We will determine the internal tangent space to an arbitrary point $[x]\in \rl^3 / SO(3)$ (equipped with the quotient diffeology), where $SO(3)$ is the special orthogonal group of $3\times 3$ matrices acting on $\rl^3$ by multiplication (i.e., rotation). Although this result is not novel, it exhibits the utility of our work so far. We consider two cases.

When $[x]=[0]$, we show $T_{[0]}(\rl^3/SO(3))=\{0\}$. As in the determination of $T_{[0]}(\rl/O(1))$ in Example \ref{ex r1}, it is sufficient to show $(\pi,v)=0$ for all $v\in T_0(\rl^3)$, as $\rl^3$ is the domain of $\pi : \rl^3 \ra \rl^3 / SO(3)$. Consider the three elements $g_1, g_2, g_3 \in SO(3)$ corresponding to the matrices

$$
\begin{pmatrix}
    -1 & 0 & 0 \\
    0 & 1 & 0 \\
    0 & 0 & -1
\end{pmatrix}, \ \
\begin{pmatrix}
    -1 & 0 & 0 \\
    0 & -1 & 0 \\
    0 & 0 & 1
\end{pmatrix}, \ \
\begin{pmatrix}
    1 & 0 & 0 \\
    0 & -1 & 0 \\
    0 & 0 & -1
\end{pmatrix}
$$

Then, for an arbitrary $v\in T_0(\rl^3)$ we have the equivalences $(\pi, v) \sim (\pi, (g_i)_*(v)), i \in \{1,2,3\}$ in the internal tangent space (viewed as $F/R$, as in Definition \ref{def 3.1}). Thus, as the differentials $(g_i)_*$ are just matrix multiplication by $g_i$ in this case, we can directly compute (in the internal tangent space)

\begin{align*}
    (\pi,4v) &= (\pi, v) + (\pi, (g_1)_*(v)) + (\pi, (g_2)_*(v)) + (\pi, (g_3)_*(v)) \\
    &= (\pi, v) + (\pi, g_1\cdot v) + (\pi, g_2\cdot v) + (\pi, g_3\cdot v) \\
    &= 0
\end{align*}

\noindent hence $(\pi,v)=0$ for any $v\in T_0(\rl^3)$. This implies the desired result.

When $[x]\ne[0]$, we employ Theorem \ref{main result cor}. We claim $T_{[x]}(\rl^3/SO(3))=\rl$. As a slice for $x\in M$, we take an open interval on the unique line between $0$ and $x$ containing $x$ but not $0$. On this slice, the tangent action of $SO(3)_{x}$, the stabilizer of the action of $SO(3)$ at $x$ (which consists of all rotations with axis of rotation passing through $x$), is trivial.

Indeed, since the tangent action of the differentials of elements of the action of $SO(3)$ on $\rl^3$ can be identified with the action of the elements themselves in this instance, note that the tangent action of $SO(3)_{x}$ is trivial because the action of $SO(3)_{x}$ as a subgroup of $SO(3)$ must be trivial on the slice $S$. Thus, by Corollary \ref{main result cor} and Example \ref{ex man}

$$T_{[x]}(\rl^3/SO(3)) = T_{[0]}(T_{x}(S)/SO(3)_{x}) = T_{[0]}(\rl / SO(3)_x) = \rl$$

\noindent as desired.
\end{ex}

\chapter[Stratified Spaces]{\textbf{Chapter V. Stratified Spaces}}\label{sec_strat_stuff}

Loosely speaking, a stratified space is a space partitioned into ``pieces," each of which are manifolds, joined in a certain way (details will be given below). As each point in the space lies in a piece--a manifold--there is a well-understood notion of ``stratified tangent space" to any point of the stratified space. Indeed, the stratified tangent space to the point is just the tangent space to the point viewed as sitting within the manifold that is the piece containing said point.

The quotient space of a manifold acted on by a proper Lie group action admits a well-understood stratification, called the ``orbit-type stratification," meaning that in addition to viewing these orbit spaces as diffeological spaces with the smooth structure given by a diffeology (in particular the quotient diffeology arising from the standard diffeology on the manifold) one can also view them as stratified spaces with a Sikorski smooth structure (see \cite{strat} and \cite{orbit_strat_1}), though this notion of smooth structure goes beyond what we will discuss in this work.

It is our goal in this chapter to show that the internal tangent space to a point of the orbit space of a manifold quotiented by a proper Lie group action (with the orbit space viewed as a diffeological space) is isomorphic to the stratified tangent space to the point (with the orbit space viewed as a stratified space with the orbit-type stratification). The fact that these two extensions of the concept of tangent space to a point agree is of value because the orbit-type stratification for an orbit space is well-understood and so certain results attained in that context can then be passed on to the associated internal tangent space to a point.

The isomorphism between the internal tangent space to a point and the stratified tangent space to a point also assists with the comparison of the internal tangent bundle described in \cite{tan_space_and_bundle} and the stratified tangent bundle explained in, among other references, \cite{strat} and formed from the stratified tangent spaces to each point. However, such a comparison goes beyond the scope of this work.

\section[The Main Result]{\textbf{V.1. The Main Result}}

We now briefly introduce the requisite material on stratified spaces. One principal reference for this material is \cite{strat}, but we will also draw our exposition from \cite{dk_lie}. Recall that in a topological space $X$ a locally closed subset $A\subseteq X$ is a subset for which every point of $A$ has a neighborhood $U\subseteq X$ with $A\cap U$ closed in $U$.

\begin{defn}
    A \textit{stratified space} is a topological space equipped with a stratification. Let $X$ be a topological space. A \textit{stratification} of $X$ is a locally finite partition $\mathcal{S}$ satisfying the following two conditions:
    \begin{itemize}
        \item(Manifold Condition) Each piece of $\mathcal{S}$, called a stratum, is a locally closed smooth manifold.
        \item(Frontier Condition) If $S_1,S_2\in \mathcal{S}$ are such that $S_1\cap \ol{S_2} \neq \emptyset$, then $S_1$ is in the topological boundary of $S_2$.
    \end{itemize}
\end{defn}

The key points of note are, first, that a manifold $M$ is automatically a stratified space with a single stratum and, second, that a manifold $M$ acted upon by a proper Lie group action of a Lie group $G$ admits another, non-trivial stratification called the ``orbit-type stratification" (introduced below). Before proceeding, we must introduce several other definitions. For background, see \cite{dk_lie} sections $2.6\ \& \ 2.7$ and \cite{strat} chapter $4$.

Throughout, $M$ shall denote a manifold acted upon by a proper Lie group action of a Lie group $G$.

\begin{defn}[\cite{dk_lie} Definition $2.6.1$]
    We say that $x,y\in M$ are of the same \textit{orbit type}, denoted $x\sim y$, if there exists a $G$-equivariant bijection from $G\cdot x$ to $G\cdot y$.
\end{defn}

Note that $\sim$ is an equivalence relation on $M$. The equivalence classes are called \textit{orbit types} in $M$, and are denoted

$$M^{\sim}_x:=\{y\in M \ | \ y \sim x\}$$

\begin{defn}[\cite{dk_lie} Definition $2.6.5$, corrected by Jordan Watts]
    We say that $x, y\in M$ are of the same \textit{local type}, denoted $x \approx y$, if for any $G$-invariant open neighborhoods $U$ and $V$ of $x$ and $y$, respectively, after shrinking $U$ and $V$ if necessary, there exists a $G$-equivariant diffeomorphism from $U$ to $V$ sending $G\cdot x$ onto $G\cdot y$.

    We will also denote the above in terms of the orbits, written as $G\cdot x \approx G\cdot y$, and say that $$G\cdot x, \ G\cdot y \in M/G$$ are of the same local type.
\end{defn}

The relation $x \approx y$ is an equivalence relation, and each local action type is $G$-invariant and contained within an orbit type. We call the equivalence classes \textit{local action types}, denoted

$$M^{\approx}_x := \{y\in M \ | \ y\approx x\}$$

Likewise there is an equivalence relation on $M/G$, denoted

$$M^{\approx}_{G\cdot x}/G = \{G\cdot y \in M/G \ | \ G\cdot y \approx G\cdot x\}$$

We now have the following result, which shows that a manifold acted upon by a Lie group via a proper Lie group action admits another, non-trivial stratification: the ``orbit-type stratification." Recall the action by $G$ is assumed to be proper.

\begin{thm}[\cite{dk_lie} Theorem $2.7.4$ and \cite{strat} Theorem $4.3.7$]
    There is a (Whitney) stratification of $M$, called the \textit{orbit-type stratification}, which consists of the connected components of the orbit types in $M$.
\end{thm}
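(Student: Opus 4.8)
The statement to be proved is that the partition of $M$ into connected components of orbit types is a stratification (in fact a Whitney stratification). The plan is to verify the two defining conditions of a stratification---the manifold condition and the frontier condition---together with local finiteness, and then to address the Whitney regularity claim. The entire argument should be run through the Equivariant Tube Theorem (Theorem \ref{tube}), which is the essential local model: near any $x_0 \in M$, the $G$-action is equivalent to the $G$-action on $G \times_{G_{x_0}} B$, where $B = T_{x_0}M/T_{x_0}(G\cdot x_0)$ and $G_{x_0}$ is compact and acts linearly. This reduces every local question about orbit types to a question about the orbit-type decomposition of a linear representation of a compact group, which is classical.

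First I would establish the manifold condition. Using the tube, one shows that in the slice $S$ (equivalently in $B$ or $T_{x_0}(G\cdot x_0)^\perp$, via Proposition \ref{slice to isotropy}) the orbit type of $x_0$ corresponds to the fixed-point set $B^{G_{x_0}}$ of the linear $G_{x_0}$-action, which is a linear subspace, hence a submanifold; the key point (the Tube Theorem guarantees this via property (3) of a slice) is that points of $S$ have stabilizer contained in $G_{x_0}$, and those with stabilizer equal to $G_{x_0}$ are exactly $B^{G_{x_0}}$. Transporting this back through the equivalence, the orbit type $M^\sim_{x_0}$ is locally a submanifold of $M$, and it is locally closed because $B^{G_{x_0}}$ is closed in $B$ and the fibration $G \times_{G_{x_0}} B \to B/G_{x_0}$ is well-behaved; passing to connected components preserves the locally-closed-submanifold property. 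Local finiteness follows because a compact group has only finitely many conjugacy classes of stabilizers occurring in a given linear representation (this is itself a consequence of the tube theorem applied inductively, or can be cited from \cite{dk_lie}), so only finitely many orbit types meet a small tube, and each meets it in finitely many components after possibly shrinking.

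Next I would verify the frontier condition: if $S_1, S_2$ are strata with $S_1 \cap \overline{S_2} \neq \emptyset$, then $S_1 \subseteq \partial S_2$. Pick $x_0 \in S_1 \cap \overline{S_2}$ and work in the tube at $x_0$. Since the orbit-type decomposition of the linear $G_{x_0}$-representation $B$ is a cone-like, conical structure (it is invariant under the radial scaling $\kappa$-type dilations used in Proposition \ref{slice to isotropy}), the closure relations among orbit types in $B$ are constant along $B^{G_{x_0}}$; this forces the closure of the stratum $S_2$ near $x_0$ to contain the whole local piece of $S_1$ through $x_0$, and $S_1 \neq S_2$ forces containment in the topological boundary. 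Globally one then uses connectedness of strata together with the fact that the local picture is uniform to propagate: the set of points of $S_1$ lying in $\overline{S_2}$ is open and closed in $S_1$, hence all of it.

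Finally, for the Whitney regularity assertion, the cleanest route is again the tube: Whitney's conditions (a) and (b) are local, invariant under diffeomorphism, and the radial/conical structure of the orbit-type decomposition of a linear compact-group representation makes them straightforward to check in the model $G \times_{G_{x_0}} B$ (one can also simply invoke the semialgebraic structure of the decomposition, or cite \cite{dk_lie} Theorem 2.7.4 and \cite{strat} Theorem 4.3.7 directly, as the statement does). I expect the main obstacle to be the frontier condition---specifically, carefully checking that the local closure relations among orbit types, established in the linear slice model, glue correctly along a stratum and do not merely hold pointwise; the conical invariance of the slice decomposition under scaling, together with an open-closed connectedness argument along $S_1$, is the mechanism that resolves this, but it requires some care in bookkeeping the conjugacy classes of stabilizers as the base point moves within $S_1$.
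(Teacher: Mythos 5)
The paper does not prove this theorem; it is quoted verbatim from the literature (\cite{dk_lie} Theorem 2.7.4 and \cite{strat} Theorem 4.3.7) and used as a black box, so there is no in-paper proof to compare against. Judged against the standard arguments in those references, your outline follows essentially the canonical route: reduce everything to the linear slice model via the Equivariant Tube Theorem, identify the local orbit type with the fixed-point subspace $B^{G_{x_0}}$, get local finiteness from the finiteness of orbit types of a compact linear representation, and prove the frontier condition by combining the conical (scaling-invariant) structure of the orbit-type decomposition of $B$ with an open-and-closed connectedness argument along the stratum. That is the right skeleton.

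Two points deserve more care than your sketch gives them. First, in identifying the orbit type of $x_0$ inside the slice with $B^{G_{x_0}}$: a point $y \in S$ has $G_y \subseteq G_{x_0}$ by slice property (3), but ``same orbit type as $x_0$'' means $G_y$ is \emph{conjugate} to $G_{x_0}$, and you need the standard fact that a closed subgroup of a compact Lie group which is conjugate to the whole group must equal it (no compact Lie group is conjugate to a proper closed subgroup of itself) to conclude $G_y = G_{x_0}$, i.e.\ $y \in B^{G_{x_0}}$. Second, the frontier condition: the precise mechanism is that for $v \in B$ with $G_v \subsetneq G_{x_0}$ and any $w \in B^{G_{x_0}}$, one has $G_{w + tv} = G_w \cap G_{tv} = G_v$ for all $t \neq 0$, so the entire local piece of $B^{G_{x_0}}$ lies in the closure of the orbit type of $v$; your phrase ``closure relations are constant along $B^{G_{x_0}}$'' is gesturing at this but should be made explicit before the open-closed propagation along $S_1$ can be run. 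The Whitney regularity claim is genuinely more work than the rest (it is where \cite{strat} invokes the local semialgebraic/cone structure in earnest), and deferring it to the cited sources is reasonable for a sketch but is not a proof of that part of the statement.
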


Although the above theorem is phrased in terms of orbit types, local action types will be of use to us below. Indeed, the significant fact about the orbit-type stratification is that it descends to the quotient $M/G$, and there we will employ the local action type.

First, we note the following lemma. For $H$ any closed subgroup of $G$, let

$$M_{(H)} := \{x\in M \ | \ \exists g\in G ~\text{s.t.}~gHg^{-1}=G_x\}$$

That is, $M_{(H)}$ denotes the set of points $x$ whose stabilizer $G_x$ is conjugate to $H$. The following is a consequence of Lemma $2.6.2(i)$ in \cite{dk_lie}.

\begin{lem}
    The orbit type of a point $x\in M$ is given by $M_{(G_x)}$, in the notation above.
\end{lem}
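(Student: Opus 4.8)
The plan is to unwind the definition of orbit type and show it coincides with the conjugacy class of stabilizers. Recall that $x \sim y$ means there is a $G$-equivariant bijection $\varphi : G\cdot x \to G\cdot y$. The statement to prove is that $M^{\sim}_x = M_{(G_x)}$, i.e., $y \sim x$ if and only if there exists $g\in G$ with $gG_yg^{-1}=G_x$ (equivalently $G_y$ conjugate to $G_x$). The reference points to Lemma $2.6.2(i)$ of \cite{dk_lie}, so the heart of the argument is the standard orbit–stabilizer correspondence, made $G$-equivariant.

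First I would recall that the orbit $G\cdot x$ is $G$-equivariantly identified with the coset space $G/G_x$ via $gG_x \mapsto g\cdot x$; this is a $G$-equivariant bijection (in fact a diffeomorphism of the immersed submanifold $G\cdot x$ with $G/G_x$, but only the bijection is needed here). Hence $x\sim y$ holds if and only if there is a $G$-equivariant bijection $G/G_x \to G/G_y$. The key elementary fact, which is exactly Lemma $2.6.2(i)$ of \cite{dk_lie}, is that a $G$-equivariant bijection $G/H \to G/K$ between coset spaces of two subgroups exists precisely when $H$ and $K$ are conjugate in $G$: given such a map $\varphi$, let $\varphi(eH) = gK$; equivariance forces $hgK = gK$ for all $h\in H$, hence $g^{-1}Hg \subseteq K$, and running the same argument with $\varphi^{-1}$ gives the reverse inclusion, so $g^{-1}Hg = K$. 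Conversely, if $g^{-1}Hg=K$ then right translation descends to a well-defined $G$-equivariant bijection $G/H \to G/K$. Applying this with $H=G_x$, $K=G_y$ gives $x\sim y \iff G_x$ and $G_y$ are conjugate $\iff y\in M_{(G_x)}$, which is the claim.

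I expect the main (minor) obstacle to be purely bookkeeping: being careful that the $G$-equivariant bijection in the definition of orbit type is between the orbits as abstract $G$-sets (no smoothness is asserted in Definition $2.6.1$), so that one only needs the set-theoretic orbit–stabilizer bijection and not the diffeomorphism $G\cdot x \cong G/G_x$; and keeping track of left versus right cosets and which side the conjugation lands on. None of this is deep — the content is entirely the observation that $G$-equivariant maps of homogeneous spaces are governed by conjugacy of isotropy groups — so the proof is short, amounting to citing Lemma $2.6.2(i)$ of \cite{dk_lie} together with the identification $G\cdot x \cong G/G_x$.
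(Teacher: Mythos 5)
Your proposal is correct and takes essentially the same route as the paper, which offers no argument of its own beyond citing Lemma $2.6.2(i)$ of \cite{dk_lie}; your unwinding of the identification $G\cdot x \cong G/G_x$ together with the conjugacy criterion for $G$-equivariant bijections of coset spaces is exactly the standard argument behind that citation, and the details (including the use of $\varphi^{-1}$ to get the reverse inclusion) check out.
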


Now we are ready to investigate the stratification of $M/G$ resulting from the orbit-type stratification of $M$. Indeed, as a consequence of \cite{strat} Corollary $4.3.11$ the orbit-type stratification descends from $M$ to a stratification of $M/G$ (also called the orbit-type stratification). Immediately as a consequence of this corollary and the above lemma, or from \cite{dk_lie} Theorem $2.6.7$, we have that the strata, each of which is a manifold, are of the form $M^{\approx}_{G\cdot x}/G$. See \cite{dk_lie} Theorem $2.6.7(iii)$ for the proof that this space is indeed a manifold. Further, from \cite{dk_lie} Theorem $2.6.7(v)$ we have (recalling Definition \ref{slice def}, Theorem \ref{tube}, and the subsequent discussion):

$$\text{dim}(M^{\approx}_{G\cdot x}/G) = \text{dim}(T_xM / T_x(G\cdot x))^{G_x}$$

Here, the notation $(T_xM / T_x(G\cdot x))^{G_x}$ denotes the fixed point set of $T_xM / T_x(G\cdot x)$ under the isotropy action of $G_x$. In general, for $H$ a closed subgroup of $G$ a Lie group acting properly on M, the \textit{fixed point set} of a manifold $M$ with respect to $H$ is

$$M^H=\{y\in M \ | \ h\cdot y = y, \ \forall h\in H\}$$

As a consequence of Theorem \ref{slice} and Definition \ref{slice def}, the above simplifies to

$$\text{dim}(M^{\approx}_{G\cdot x}/G)=\text{dim}(T_xS)^{G_x}$$

\noindent where $S$ is a slice through $x$ in $M$, acted upon by the isotropy action of $G_x$ (or it can be viewed as the tangent action of $G_x$ on the invariant subspace $T_xS$ of $T_xM$). For a refresher, see the discussion following Theorem \ref{tube}.

It is not our purpose here to expound upon the nature of stratified spaces. Therefore, we have opted for the abridged treatment given above. The intent is to obtain that (a) a manifold acted upon by a proper Lie group action of a Lie group $G$ has an ``orbit-type stratification," (b) that the orbit-type stratification descends to a stratification of the orbit space $M/G$, and (c) the resulting stratum containing a point $[x]\in M/G$ is a manifold of dimension $\text{dim}(T_xS)^{G_x}$.

In particular, a point $x$ of a stratified tangent space lies in precisely one of the manifold strata, and so has a well-defined tangent space that can be associated to it, the \textit{stratified tangent space} to the point $x$, namely the tangent space of the point viewed as an element of the associated manifold stratum.

For our work, because the fixed point set of a vector space under a linear Lie group action (such as the tangent action of $G_x$) is itself a vector space, the stratified tangent space of a point $[x]\in M/G$ is isomorphic to the vector space $(T_xS)^{G_x}$. We now show that the stratified tangent space to a point of $M/G$ is isomorphic to the internal tangent space at that point.

\begin{thm}\label{ITS to STS}
    Let $M$ be a manifold acted upon by a proper Lie group action of a Lie group $G$, with $M/G$ the resulting orbit space. Then for $[x]\in M/G$ we have

    $$T_{[x]}(M/G) \cong (T_xS)^{G_x}$$

    That is, the internal tangent space to $[x]\in M/G$ viewed as a diffeological space with the quotient diffeology is isomorphic to the stratified tangent space to $[x]\in M/G$ viewed as a stratified space with induced orbit-type stratification.
\end{thm}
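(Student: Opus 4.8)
The first step is to strip away the geometry. By Corollary~\ref{main result cor} together with Proposition~\ref{slice to isotropy}, we have $T_{[x]}(M/G)\cong T_{[0]}(V/K)$, where $V=T_xS$ is a finite-dimensional vector space and $K=G_x$ is a compact group acting linearly on $V$; after fixing a $K$-invariant inner product on $V$ (guaranteed by the discussion preceding Proposition~\ref{slice to isotropy}) this action is orthogonal. Since $(T_xS)^{G_x}$ is exactly the fixed subspace $V^K$, the theorem is equivalent to the purely linear statement $T_{[0]}(V/K)\cong V^K$, which is what I would prove. Write $\pi\colon V\to V/K$ for the quotient.

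\noindent\textbf{Step 1: reduce every internal tangent vector to a ``linear'' one.} For $v_0\in V$ let $\pi_{v_0}\colon(\rl,0)\to(V/K,[0])$ be the plot $t\mapsto[tv_0]$ (a plot, since it factors as $\pi$ composed with a linear map), and let $\eta_{v_0}:=(\pi_{v_0},1)\in T_{[0]}(V/K)$ denote the class of the standard vector $1\in T_0\rl$ along this plot (notation of Definition~\ref{def 3.1}). First I would verify that $v_0\mapsto\eta_{v_0}$ is $\rl$-linear: homogeneity from precomposing $\pi_{v_0}$ with $t\mapsto ct$, and additivity from the two-dimensional plot $(s,t)\mapsto[sv_0+tw_0]$, through which $\pi_{v_0}$, $\pi_{w_0}$, and $\pi_{v_0+w_0}$ all factor. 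Next I would show this map is surjective: an arbitrary internal tangent vector $\xi$ at $[0]$ is represented by a $1$-dimensional pointed plot $\bar q\colon(\rl,0)\to(V/K,[0])$ (restrict a representing plot to a line through the origin); lift $\bar q$ locally through $\pi$ to an ordinary-smooth $q\colon(\rl,0)\to(V,0)$; inside $T_0(V)$ the vectors $(q,1)$ and $(t\mapsto t\,q'(0),\,1)$ coincide, since both factor through $\mathrm{id}_V$ as germs and push forward to $q'(0)$; applying the linear map on internal tangent spaces induced by the smooth map $\pi$ then identifies $\xi$ with $\eta_{q'(0)}$. Finally I record the one relation that matters: because $t\mapsto[tw]$ and $t\mapsto[tkw]$ are literally the \emph{same} map $\rl\to V/K$, we get $\eta_{kw}=\eta_w$, hence $\eta_{w-kw}=0$, for all $w\in V$, $k\in K$. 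This package is what I would isolate as Lemma~\ref{ITS to STS lem 1}.

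\noindent\textbf{Step 2: the average map.} Let $P\colon V\to V^K$ be the Haar-average projection $Pv=\int_K k\cdot v\,dk$; it is the orthogonal projection onto $V^K$ in the invariant inner product, so $P|_{V^K}=\mathrm{id}$ and $\ker P=(V^K)^\perp=\mathrm{span}\{w-kw:w\in V,\ k\in K\}$. Invoking Proposition~\ref{avg}, $P$ descends to a (diffeologically) smooth map $\mathrm{avg}\colon V/K\to V^K$ with $\mathrm{avg}\circ\pi=P$ (well-definedness being independence of the orbit representative and of the local lift, by invariance of Haar measure). Hence $\mathrm{avg}$ induces a linear map $\mathrm{avg}_*\colon T_{[0]}(V/K)\to T_0(V^K)\cong V^K$ (last identification by Example~\ref{ex man}), and since $\mathrm{avg}([tv_0])=P(tv_0)=tPv_0$ this induced map satisfies $\mathrm{avg}_*(\eta_{v_0})=Pv_0$.

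\noindent\textbf{Step 3: conclusion, and the main obstacle.} It then suffices to show $\mathrm{avg}_*$ is an isomorphism. It is surjective, since $\mathrm{avg}_*(\eta_v)=Pv=v$ for $v\in V^K$. For injectivity, suppose $\mathrm{avg}_*(\xi)=0$; by Step~1 write $\xi=\eta_{v_0}$, so $Pv_0=0$, i.e.\ $v_0\in\ker P=(V^K)^\perp=\mathrm{span}\{w-kw\}$; since $\eta$ is linear and each $\eta_{w-kw}=0$ by Step~1, this forces $\xi=\eta_{v_0}=0$. Thus $T_{[0]}(V/K)\cong V^K$, and chaining with Corollary~\ref{main result cor} gives $T_{[x]}(M/G)\cong(T_xS)^{G_x}$. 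I expect the one real difficulty to be exactly this injectivity statement: that the ``linear'' vectors $\eta_{v_0}$ collapse precisely along $(V^K)^\perp$ and not one dimension further. There is no direct handle on which $\eta_{v_0}$ vanish; the whole point of the average map is that it supplies a left inverse to $v_0\mapsto\eta_{v_0}$, and making this precise is what I would package as Lemma~\ref{ITS to STS lem 2}.
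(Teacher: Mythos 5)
Your proposal is correct, and it reaches the theorem by a genuinely different route from the paper in both of the key steps, even though the underlying map is the same: your $\eta_{v_0}=(\pi_{v_0},1)$ coincides with the paper's $\varphi(v_0)=(\pi,v_0)$, since $(\pi_{v_0},1)\sim(\pi,v_0)$, and both arguments first reduce to $T_{[0]}(T_xS/G_x)$ via Corollary \ref{main result cor}. Where they diverge is in how the two containments are established. The paper proves $\text{ker}(\varphi)\supseteq((T_xS)^{G_x})^{\perp}$ (Lemma \ref{ITS to STS lem 1}) by producing, for each $v_o$ with $\Pi(v_o)=0$, a linear dependence among the vectors $g_*v_o$ whose coefficients have nonzero sum --- an argument that requires setting up an augmented linear system and invoking the Haar integral to rule out the degenerate case --- and it proves injectivity on the fixed subspace (Lemma \ref{ITS to STS lem 2}) by a long cancellation analysis of an arbitrary element of $R$ inside the direct sum $F$, supported by the technical Lemma \ref{ITS to STS lem tech}. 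You replace the first step by the elementary identity $((T_xS)^{G_x})^{\perp}=\text{span}\{w-k\cdot w\}$ (true, but a claim you should actually verify: $u\perp(w-k\cdot w)$ for all $w,k$ forces $u=k^{-1}\cdot u$ for all $k$ by orthogonality of the action, whence the two subspaces have the same orthogonal complement) combined with the observation that $t\mapsto[tw]$ and $t\mapsto[t(k\cdot w)]$ are literally the same plot; and you replace the second by noting that the average map descends to a diffeologically smooth map $T_xS/G_x\to(T_xS)^{G_x}$ whose pushforward is a left inverse of $\eta$ on the fixed subspace. The one ingredient you use that the paper deliberately avoids is functoriality of the internal tangent space under smooth maps (the existence of $\mathrm{avg}_*$ and of the pushforward along $\pi$); this is standard, follows directly from Definition \ref{def 3.1} since $p=q\circ g$ implies $f\circ p=f\circ q\circ g$, and is available in \cite{tan_space_and_bundle}, but it should be stated explicitly since the paper never records it. You should also phrase surjectivity for a general finite sum $\sum_i c_i(\ol{p_i},v_i)$ rather than a single summand, which linearity of $\eta$ handles immediately. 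Granting these small additions, your argument is complete, is substantially shorter than the paper's, and trades the paper's hands-on manipulation of relations in $F$ for two clean structural facts about the averaging projection.
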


To prove this theorem, in addition to employing Definition \ref{def 3.1}, we also need to introduce the Haar measure of a compact Lie group and the related averaging principle, which we now discuss.

When a Lie group $G$ acts via a proper Lie group action on a manifold $M$, the stabilizer of the group $G_x$ at any point $x\in M$ is known to be compact. Therefore, $G_x$ possesses a unique, left-invariant Haar measure which maps $f\mapsto \int_{G_x}f(g)dg : C(G_x) \ra \cx$, such that $\int_{G_x}dg=1$ (this is discussed in \cite{dk_lie} section $4.2$; see also \cite{folland} section $11.1$). Here, $C(G_x)$ denotes the space of continuous $\mathbb{C}$-valued functions on $G_x$; such functions are automatically compactly supported.

The tangent action of $G_x$ is a compact, linear group action on the invariant vector subspace $T_xS$ of $T_xM$. Therefore, we can as in \cite{dk_lie} form the \textit{average map} $\Pi : T_xS \ra T_xS$ defined for $v\in T_xS$ by

$$\Pi(v)=\int_{G_x}g\cdot v \ dg = \int_{G_x}g_*(v) \ dg$$

\noindent where $g\in G_x$ acts on $v$ via the tangent action. In particular, $g_*$ is shorthand for $g_{*,x}$. As a linear map between vector spaces, it will be useful to think of this transformation as a matrix acting on Euclidean space.

Recall from the discussion between Theorem \ref{tube} and Proposition \ref{slice to isotropy} that as an invariant subspace of $T_xM$ we may regard $T_xS$ (and any invariant subspaces) as the Euclidean space $\rl^k$ for the appropriate $k$ with the standard Euclidean inner product, on which $G_x$ acts orthogonally. \textit{We make this association going forward}. We have the following proposition.

\begin{prop}[\cite{dk_lie} Proposition $4.2.1$; Averaging Principle]\label{avg}
    The map $\Pi$ is the orthogonal (with respect to the standard Euclidean inner product) linear projection from $T_xS$ to $(T_xS)^{G_x}$, the space of fixed points of the tangent action of $G_x$ on $T_xS$.
\end{prop}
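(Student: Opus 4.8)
The plan is to read off four elementary properties of $\Pi$ directly from its defining formula and then invoke a single standard fact from linear algebra. Concretely, I would show: (i) $\Pi$ is linear; (ii) $\Pi(v)\in(T_xS)^{G_x}$ for every $v\in T_xS$; (iii) $\Pi$ restricts to the identity on $(T_xS)^{G_x}$; and (iv) $\Pi$ is self-adjoint for the standard Euclidean inner product $\langle\cdot,\cdot\rangle$ on $T_xS\cong\rl^k$. Properties (ii) and (iii) together say that $\Pi\circ\Pi=\Pi$ and that $\operatorname{im}\Pi$ is exactly $(T_xS)^{G_x}$, and a self-adjoint idempotent endomorphism of a finite-dimensional inner product space is precisely the orthogonal projection onto its image; this yields the proposition. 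Throughout I would use three ingredients already available: that $T_xS$ is finite-dimensional, so the $T_xS$-valued integral defining $\Pi$ is componentwise integration and therefore commutes with linear maps; that, under the identification $T_xS\cong\rl^k$ recalled just before the statement, each $g\in G_x$ acts by an orthogonal matrix $g_*$, with $(g^{-1})_*=(g_*)^{-1}$ since the tangent action is a homomorphism; and that the normalized Haar measure $dg$ on the compact group $G_x$ is left-invariant and invariant under inversion $g\mapsto g^{-1}$, the latter because compactness makes $G_x$ unimodular (see \cite{folland}).

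Property (i) is immediate from linearity of each $g_*$ and of the integral. For (ii), I would fix $h\in G_x$, pull the linear map $h_*$ inside the integral, and use left-invariance of $dg$ via the substitution $g\mapsto hg$:
$$h_*\bigl(\Pi(v)\bigr)=\int_{G_x}h_*\bigl(g_*(v)\bigr)\,dg=\int_{G_x}(hg)_*(v)\,dg=\int_{G_x}g_*(v)\,dg=\Pi(v),$$
so $\Pi(v)$ is fixed by every $h\in G_x$. For (iii), if $v\in(T_xS)^{G_x}$ then $g_*(v)=v$ for all $g$, hence $\Pi(v)=\bigl(\int_{G_x}dg\bigr)v=v$ by the normalization $\int_{G_x}dg=1$. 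Together these give $\Pi^2=\Pi$ with image $(T_xS)^{G_x}$.

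For (iv), I would move $\langle\,\cdot\,,w\rangle$ inside the integral and use orthogonality of $g_*$, which gives $\langle g_*(v),w\rangle=\langle v,(g^{-1})_*(w)\rangle$; invariance of $dg$ under inversion then identifies $\int_{G_x}(g^{-1})_*(w)\,dg$ with $\Pi(w)$:
$$\langle\Pi(v),w\rangle=\int_{G_x}\langle g_*(v),w\rangle\,dg=\int_{G_x}\bigl\langle v,(g^{-1})_*(w)\bigr\rangle\,dg=\bigl\langle v,\Pi(w)\bigr\rangle.$$
Finally, idempotence gives $T_xS=\operatorname{im}\Pi\oplus\ker\Pi$, and self-adjointness makes these summands orthogonal: for any $v,w$, $\langle\Pi(v),\,w-\Pi(w)\rangle=\langle v,\Pi(w)-\Pi^2(w)\rangle=0$, and every element of $\ker\Pi$ has the form $w-\Pi(w)$. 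Hence $\Pi$ is the orthogonal projection of $T_xS$ onto $(T_xS)^{G_x}$, as claimed.

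I do not expect a genuine obstacle: the argument is bookkeeping with the Haar integral. The one point that needs real care is the inversion-invariance of $dg$ used in (iv), which is exactly where compactness of $G_x$ is essential, together with keeping straight the standing identification that upgrades the tangent action from an isometric action for some chosen invariant inner product to an orthogonal action on Euclidean space; with those in place, the closing \textup{``self-adjoint idempotent equals orthogonal projection''} step is routine.
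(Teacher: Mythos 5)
Your proof is correct and complete. The paper does not actually prove this proposition---it is imported from \cite{dk_lie} (Proposition 4.2.1), with only the remark that follows gesturing at the left-invariance computation you carry out in step (ii)---so there is no in-paper argument to compare against; what you give is the standard averaging argument, consistent with the cited source and with the standing identification (made just before the statement) that $G_x$ acts orthogonally on $T_xS\cong\rl^k$. One small observation: you can bypass the inversion-invariance/unimodularity point entirely by checking orthogonality directly, since for $w\in(T_xS)^{G_x}$ the invariance of the inner product gives $\langle \Pi(v),w\rangle=\int_{G_x}\langle g_*(v),g_*(w)\rangle\,dg=\langle v,w\rangle$, hence $v-\Pi(v)\perp(T_xS)^{G_x}$; combined with your (ii) and (iii) this already identifies $\Pi$ as the orthogonal projection onto $(T_xS)^{G_x}$.
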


\begin{rem}
    As described in \cite{dk_lie} on pages $215-216$, the left-invariance of the measure and properties of the integral imply $\Pi(g'\cdot v)=\Pi(v)$ for any $g'\in G_x$.
\end{rem}

We are now ready to begin the proof of Theorem \ref{ITS to STS}. Rather than overburden a single, beleaguered LaTeX proof environment we will proceed via several lemmas.

The idea is as follows. From Corollary \ref{main result cor} we know that $T_{[x_0]}(M/G) \cong T_{[0]}(T_xS/G_x)$. We now employ the first isomorphism theorem to construct an isomorphism from $(T_xS)^{G_x}$ to $T_{[0]}(T_xS/G_x)$. We begin by employing Definition \ref{def 3.1} which describes $T_{[0]}(T_xS/G_x)$ as $F/R$ where $F=\oplus_{\ol{p}}T_0(U_{\ol{p}})$ is the sum of tangent spaces at $0$ in the domains of all pointed plots $\ol{p}:U_{\ol{p}} \ra T_xS/G_x$ sending $0$ to $[0]$ in the quotient diffeology on $T_xS/G_x$ (which arises from the standard diffeology on the manifold $T_xS$) and $R$ is generated by the basic relations $(\ol{p},v) - (\ol{q}, h_*(v))$ where $h$ represents a germ of smooth maps with $\ol{p} = \ol{q} \circ h$ as germs at $0$ (we use $h$ here rather than $g$ for an arbitrary germ of smooth maps to avoid confusion with Lie group elements). We then define the map $\varphi : T_xS \ra F/R$ by:

$$\varphi(v) = (\pi, v)\in F/R$$

\noindent for $v\in T_xS$. This map warrants explanation. The map $\pi$ (here and below representing $\pi : T_xS \ra T_xS/G_x$), the projection operator associated to the tangent action of $G_x$, is itself a plot in the quotient diffeology on $T_xS/G_x$ (as $\pi = \pi \circ id$, where the identity map $id:T_xS \ra T_xS$ lies in the standard diffeology of $T_xS$). Therefore, $v\in T_xS$ lies in the domain of a plot in the quotient diffeology and so can be identified as an element of $F$ in the summand $T_0(U_{\pi})=T_0(T_xS)=T_xS$. Therefore, with $\iota$ as the inclusion map $T_xS \hookrightarrow T_0(T_xS)=T_0(U_{\pi}) \subset F$ and $\ol{\pi}$ the projection $F \ra F/R$, we have $\varphi = \ol{\pi} \circ \iota$ and the diagram

$$
\xymatrix{
T_xS ~ \ar@{^{(}->}[r]^{\iota} \ar[dr]_{\varphi} & F \ar[d]^{\ol{\pi}} \\
& F/R
}
$$

Because $\ol{\pi}$ is a linear projection ($F$ is a vector space and $F/R$ its vector space quotient), $\varphi$ is clearly a homomorphism. We can also easily show $\varphi$ is surjective. Indeed, for $(\ol{p},v)$ any element of $F/R$ the fact that $\ol{p}$ is in the quotient diffeology means that locally about $0$ in the domain of $\ol{p}$ we have $\ol{p} = \pi \circ p$ for $p$ a smooth map in the standard diffeology on $T_xS$. We have

$$
\xymatrix{
U_{\ol{p}} \ar[rr]^{p} \ar[dr]_{\ol{p}} & & T_xS \ar[dl]^{\pi} \\
& T_xS/G_x &
}
$$

\noindent and hence in $F/R$ we have $(\ol{p}, v) \sim (\pi, p_*(v))$ (i.e., $(\ol{p}, v) - (\pi, p_*(v))\in R$). This implies that any internal tangent vector $(\ol{p},v)\in F/R$ is equivalent to an element which takes a value of zero outside the summand $T_0(T_xS)$ corresponding to plot $\pi$. As $\varphi$ maps $T_xS =T_0(T_xS)$ by inclusion onto this summand term in $F$ and then projects onto $F/R$, it is surjective.

We now turn to the kernel of our surjective homomorphism $\varphi$. First, we need to decompose the domain $T_xS$ into two orthogonal subspaces. Recall that the fixed point set $(T_xS)^{G_x}$ is a closed subspace of $T_xS$, due to the fact that the tangent action is continuous and linear. Therefore, we can write $T_xS = (T_xS)^{G_x} \oplus ((T_xS)^{G_x})^{\perp}$ and we can decompose any element $v\in T_xS$ uniquely as $v=v_f + v_o$ for some $v_f \in (T_xS)^{G_x}$ and $v_o \in ((T_xS)^{G_x})^{\perp}$.

Now, our primary goal is to show that the kernel of $\varphi$ is the perpendicular space, $\text{ker}(\varphi) = ((T_xS)^{G_x})^{\perp}$. Then, by the first isomorphism theorem, our surjective homomorphism descends to an isomorphism from $(T_xS)^{G_x}$ to $F/R$. Combined with Corollary \ref{main result cor} and the fact that $F/R$ is $T_{[0]}(T_xS/G_x)$, the proof will then be complete. Our first substantial step toward this goal is the following lemma.

\begin{lem}\label{ITS to STS lem 1}
    The kernel of $\varphi$ contains $((T_xS)^{G_x})^{\perp}$. That is, $\text{ker}(\varphi) \supseteq ((T_xS)^{G_x})^{\perp}$.
\end{lem}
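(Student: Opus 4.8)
The plan is to exploit that $\text{ker}(\varphi)$ is a linear subspace of $T_xS$, so it is enough to exhibit a spanning set of $((T_xS)^{G_x})^{\perp}$ made of vectors that $\varphi$ annihilates; the elements of the compact group $G_x$ will furnish exactly such vectors.

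First I would observe that each $g\in G_x$, acting on $T_xS$ by its (linear) tangent action, is a smooth automorphism of the manifold $T_xS$ fixing $0$ and satisfying $\pi = \pi\circ g$ (indeed globally, hence certainly as germs at $0$), since $g$ carries each $G_x$-orbit to itself. Writing $g$ also for this linear self-map, this means $(\pi,v) - (\pi, g_*(v))$ is a basic relation lying in $R$ for every $v\in T_xS$; and because $g$ is linear, $g_*(v) = g\cdot v$ under the identification $T_0(T_xS) = T_xS$. Hence $\varphi(v - g\cdot v) = (\pi, v) - (\pi, g\cdot v) = 0$ in $F/R$, so $v - g\cdot v\in \text{ker}(\varphi)$ for all $v\in T_xS$ and $g\in G_x$. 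Since $\text{ker}(\varphi)$ is a subspace, it contains $W := \text{Span}\{\, v - g\cdot v \ | \ v\in T_xS,\ g\in G_x \,\}$.

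The heart of the matter is then the purely linear-algebraic identity $W = ((T_xS)^{G_x})^{\perp}$, using the $G_x$-invariant inner product (which, per the preceding discussion, we identify with the standard Euclidean one). The inclusion $W\subseteq ((T_xS)^{G_x})^{\perp}$ is a one-line computation: for $u\in (T_xS)^{G_x}$ we have $\langle v - g\cdot v,\, u\rangle = \langle v, u\rangle - \langle v,\, g^{-1}\cdot u\rangle = 0$ by orthogonality of the action together with $g^{-1}\cdot u = u$. For the reverse inclusion I would invoke the average map $\Pi$ of Proposition \ref{avg}: given $v_o\in ((T_xS)^{G_x})^{\perp}$ we have $\Pi(v_o)=0$, so
$$ v_o = v_o - \Pi(v_o) = \int_{G_x}(v_o - g\cdot v_o)\,dg, $$
which is a limit of Riemann sums that are convex combinations of elements of $W$ (equivalently, by Carath\'eodory's theorem applied to the compact set $G_x\cdot v_o$, an honest finite convex combination of such). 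Since $W$ is a finite-dimensional, hence closed, subspace, $v_o\in W$. Combining the two inclusions, $((T_xS)^{G_x})^{\perp} = W \subseteq \text{ker}(\varphi)$, which is the assertion.

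The main obstacle is conceptual rather than computational: the relations generating $R$ — and therefore the condition for membership in $\text{ker}(\varphi)$ — are finitary, each element of $R$ being a finite linear combination of basic relations, so one cannot simply ``integrate'' the relation $(\pi,v)\sim(\pi, g\cdot v)$ over $G_x$. The work lies entirely in the reverse inclusion of the third step, where the continuous averaging supplied by $\Pi$ must be converted into membership in the finite-dimensional span $W$; this is precisely where Proposition \ref{avg} and the finite dimensionality of $T_xS$ enter, and it is what distinguishes the general compact-group situation from the finite-group computations carried out in Examples \ref{ex r1} and \ref{ex r3}.
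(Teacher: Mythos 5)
Your proof is correct, and while it shares the paper's overall skeleton --- both arguments rest on the basic relations $(\pi,v)\sim(\pi,g_*v)$ for $g\in G_x$ (valid since $\pi=\pi\circ g$ as germs at $0$) together with $\Pi(v_o)=0$ from Proposition \ref{avg} --- you finish the crucial step differently, and more cleanly. The paper must convert the vanishing of the Haar integral into an explicit finite linear dependence $\sum_i c_i(\pi,g^i_*v_o)+c(\pi,g_*v_o)=0$ whose coefficients sum to something nonzero; the whole augmented-matrix construction with the extra row of $1$'s, and the integral-based contradiction showing the system is solvable, exist precisely to rule out the degenerate case $\sum_i c_i + c=0$, in which the dependence yields only $0=0$. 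You sidestep that entirely by passing to the subspace $W=\mathrm{Span}\{v-g\cdot v\}\subseteq\ker(\varphi)$ and observing that
$$v_o = v_o-\Pi(v_o)=\int_{G_x}(v_o-g\cdot v_o)\,dg$$
lies in $W$ because the integrand is a continuous $W$-valued function and a finite-dimensional subspace is closed (your Carath\'eodory remark, or simply integrating coordinate-wise in a basis of $W$, makes this precise). This buys a shorter and softer argument that makes the role of finite-dimensionality transparent and avoids the coefficient bookkeeping; what it gives up is the paper's explicitness, which actually produces the scalars $c_1,\dots,c_n,c$ realizing $(\pi,v_o)$ as a combination of basic relations (as illustrated in the worked $\rl^3/SO(3)$ example following the lemma). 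Your diagnosis of the main obstacle --- that $R$ is generated by finite linear combinations, so one cannot integrate relations directly --- is exactly the issue both proofs are designed to overcome. One small remark: only the inclusion $((T_xS)^{G_x})^{\perp}\subseteq W$ is needed for the lemma; the reverse inclusion you verify is correct but not required here.
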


\begin{proof}
    For this proof, let $P$ denote $((T_xS)^{G_x})^{\perp}$. Suppose we have an element $v_o\in P \subseteq T_xS$. We aim to show that $v_o\in \text{ker}(\varphi)=\text{ker}(\ol{\pi}\circ \iota)$. That is, we need to display $\iota(v_o)=(\pi,v_o)$ as a linear combination of basic relations $(\ol{p},v) - (\ol{q},h_*(v))$ in $R$.

    Indeed, because $v_o\in P$, the average map $\Pi$ described above and detailed in Proposition \ref{avg} maps $v_o$ to $0$ in $T_xS$, i.e., $\Pi(v_o)=0$. We can apply $\Pi$ to the summand $T_0(U_{\pi})=T_0(T_xS)=T_xS$ and interpret the above as $\Pi((\pi,v_o))=0$ in $F$ (we will also employ this below); that is (recalling that here and below $g_{*,x}:=g_*$),

    \begin{align}\label{ITS to STS lem 1 eq}
    \int_{G_x} g_*((\pi,v_o)) \ dg = 0
    \end{align}

    We now show why the above equation allows us to write

    $$(\pi,v_o) = \sum c_i[(\ol{p_i},v_i)-(\ol{q_i},h^i_*(v_i))]$$

    \noindent for $c_i\in \rl$, pointed plots $\ol{p_i}, \ol{q_i}$, germs of smooth maps $h^i$, and such that $\ol{p_i} = \ol{q_i}\circ h^i$ as germs (where each $v_i$ represents an arbitrary element of the tangent space $T_0(U_{\ol{p_i}})$). Indeed, naively, suppose we could obtain a linearly dependent set of vectors of the form $\{(\pi,g^i_*v_o)\}_{i=1}^N$ (that is, $\sum_{i=1}^Nc_i(\pi, g^i_*v_o)=0$ for come constants $c_i$ not all zero), where each $g^i\in G_x$. Then by taking $C:=\sum c_i$ we would have

    $$C(\pi, v_o) = C(\pi,v_o) - \sum_{i=1}^Nc_i(\pi, g^i_*v_o)$$

    \noindent and the argument would be complete upon dividing by $C$. However, there is a flaw in that we have disregarded the possibility that $C=0$, which must be remedied.

    We therefore proceed more carefully, breaking the argument into two cases. First, consider the situation when $|G_x|$ is finite. In this case, the statement that $\Pi((\pi,v_o))=0$ reduces to

    $$(\pi,g^1_*v_o) + \cdots + (\pi,g^N_*v_o)=0$$

    \noindent for the finitely many $g^i\in G_x$ and where $N=|G_x|$. Thus in this case we can write

    $$|G_x|(\pi, v_o) = |G_x|(\pi, v_o) - \sum_{i=1}^{|G_x|}(\pi, g^i_*v_o)$$

    \noindent and because $\pi = \pi \circ g^i$ for all $g^i\in G_x$ (for $\pi: T_xS \ra T_xS/G_x$) along with the fact that $0<|G_x|<\infty$ this shows that $(\pi,v_o)$ lies in $R$ and hence $v_o$ lies in $\text{ker}(\varphi)$.

    In the case where $|G_x|$ is infinite, begin by letting $\{(\pi,g^i_*v_o)\}_{i=1}^n$ denote a maximal linearly independent set in $P$ taken from vectors of the form $g^i_*v_o$ with $g^i\in G_x$. Because all such vectors lie in the finite-dimensional vector space $T_xS$, this is allowed. Then, for any $g\in G_x$, there are constants $c_i$ and $c$ (not all zero) such that

    $$\sum_{i=1}^nc_i(\pi, g^i_*v_o) + c(\pi, g_*v_o)=0$$

    \noindent which can be obtained by solving a linear system. We aim to solve the system with the added constraint that $(\sum_{i=1}^nc_i) + c = 1$ (equivalently, $(\sum_{i=1}^nc_i) + c \neq 0$). Consider the resulting augmented system (wherein we use vector notation for clarity):

    $$
    \begin{amatrix}{4}
    \ora{g^1_*v_o} & \cdots & \ora{g^n_*v_o} & \ora{g_*v_o} & \vec{0} \\
     &  &  &  &  \\
    1 & \cdots & 1 & 1 & 1 \\
    \end{amatrix}
    $$

    In this system, the $\ora{g^1_*v_o}, \dots, \ora{g^n_*v_o}$ constitute an $m\times n$ submatrix with $m\ge n$ and $m=\text{dim}(T_xS)$. The vectors $\ora{g_*v_o}$ and $\vec{0}$ are both $m\times 1$. The bottom row of the matrix consists solely of $1$s. Overall, then, the system is $(m+1)\times(n+1)$, not counting the augmented column. We proceed to reduce the system as follows.

    \begin{enumerate}
        \item First, perform elementary row operations to reduce the $m\times n$ submatrix containing $\ora{g^1_*v_o}, \dots, \ora{g^n_*v_o}$ to
        $$
        \begin{pmatrix}
            Id_{n\times n} \\
            0_{(m-n)\times n}
        \end{pmatrix}
        $$
        \noindent where $Id$ denotes the identity matrix (of dimension $n$) and below it lie $m-n$ rows of $0$s. This can be done, as the vectors composing this submatrix are linearly independent. Note that this can be done independent of the column containing $\ora{g_*v_o}$ (although this column is potentially altered by the process) and without affecting the bottom row of $1$s in the overall matrix.
        \item Second, eliminate all the $1$s in the bottom row via subtraction, except the final $1$ in the column containing $\ora{g_*v_o}$.
        \item Now, the final $(m+1)$th entry of the column which originally contained $\ora{g_*v_o}$ takes the form
        $$1-\alpha_1(g_*v_o)_1 - \cdots - \alpha_m(g_*v_o)_m$$
        \noindent where $(g_*v_o)_i$ denotes the $i$th entry of $\ora{g_*v_o}$ as an element of $T_xS$ and the $\alpha_i$'s are real constants independent of $\ora{g_*v_o}$ (they depend only on the $\ora{g^i_*v_o}$ vectors and how they were row reduced). In particular, we can vary $g$ in $\ora{g_*v_o}$ without varying the $\alpha_i$'s.
        \item If the entry referenced in the item above satisfies
        $$1-\alpha_1(g_*v_o)_1 - \cdots - \alpha_m(g_*v_o)_m \neq 0$$
        \noindent then the overall augmented matrix can be solved, meaning that we have attained our desired linear dependence relation with the additional constraint on the constants; namely, that $(\sum_{i=1}^nc_i) + c = 1 \neq 0$. That there exists a $g$ such that $\ora{g_*v_o}$ allows for this is justified below.
    \end{enumerate}

    Indeed, the condition that $1-\alpha_1(g_*v_o)_1 - \cdots - \alpha_m(g_*v_o)_m \neq 0$ is equivalent to $\langle\ora{\alpha}, \ora{g_*v_o}\rangle\neq 1$, where $\langle\cdot,\cdot\rangle$ denotes the Euclidean inner product on $T_xS$. We show there exists such a $g\in G_x$ by contradiction. Suppose to the contrary that for all $g\in G_x$ we have
    $$\langle\ora{\alpha}, \ora{g_*v_o}\rangle = 1$$
    \noindent where here we are taking full advantage of the fact that $\ora{\alpha}$ is independent of $g$ in $\ora{g_*v_o}$. In this case, the fact that $\Pi(v_o)=0$ and the fact that integral and inner product can be interchanged together imply

    $$0=\langle\ora{\alpha},\Pi(v_o)\rangle=\int_{G_x}\langle\ora{\alpha},\ora{g_*(v_o)}\rangle \ dg = \int_{G_x}1 \ dg = 1$$

    This is a contradiction. Therefore, there exists a $g\in G_x$ such that

    $$
    \begin{amatrix}{4}
    \ora{g^1_*v_o} & \cdots & \ora{g^n_*v_o} & \ora{g_*v_o} & \vec{0} \\
     &  &  &  &  \\
    1 & \cdots & 1 & 1 & 1 \\
    \end{amatrix}
    $$

    \noindent can be solved. Let $c_1, \dots, c_n, c$ denote the entries of the solution vector. We have shown, with $C=(\sum_{i=1}^nc_i)+c$, that

    $$C(\pi, v_o) = C(\pi,v_o) - \sum_{i=1}^nc_i(\pi, g^i_*v_o)-(\pi,g_*v_o)$$

    \noindent and, because we have $\pi = \pi \circ g^i=\pi\circ g$ (for $\pi:T_xS \ra T_xS/G_x$ and $g^i,g\in G_x$) and the fact that $C\neq 0$, we have shown that $(\pi,v_o)\in R$; hence $v_o\in \text{ker}(\varphi)$.

    Thus, whether $|G_x|$ is finite or infinite, we have shown that for any $v_o\in P$ it is the case that $\iota(v_o)=(\pi,v_o)$ lies in $R$, hence $v_o\in \text{ker}(\varphi)$. This shows $\text{ker}(\varphi) \supseteq ((T_xS)^{G_x})^{\perp}$, as desired.
\end{proof}

\begin{ex}
    We demonstrate solving the system in the above lemma in the case of $\rl^3/SO(3)$, when seeking to determine $T_{[0]}(\rl^3/SO(3))$. In this case the stabilizer of the point $0$ satisfies $SO(3)_{0}=SO(3)$ and we can take all $\rl^3$ as a slice through $0$. Therefore, $T_xS=\rl^3$ and $(T_xS)^{SO(3)}=\{0\}$, meaning that $((T_xS)^{SO(3)})^{\perp}=\rl^3$ as well. We consider $v_o=(1,1,1)^T$ and the maximal (in $T_0(\rl^3)=\rl^3$) linearly independent set: 

    $$
    \begin{pmatrix}
        -1 \\
        1 \\
        -1
    \end{pmatrix}, 
    \begin{pmatrix}
        -1 \\
        -1 \\
        1
    \end{pmatrix},
    \begin{pmatrix}
        1 \\
        -1 \\
        -1
    \end{pmatrix}
    $$

    \noindent arising from $v_o$ via the elements $g^1,g^2,g^3$ of $SO(3)=SO(3)_0$ given as matrices by:

    $$ 
    \begin{pmatrix}
        -1 & 0 & 0 \\
        0 & 1 & 0 \\
        0 & 0 & -1
    \end{pmatrix}, \ \
    \begin{pmatrix}
        -1 & 0 & 0 \\
        0 & -1 & 0 \\
        0 & 0 & 1
    \end{pmatrix}, \ \
    \begin{pmatrix}
        1 & 0 & 0 \\
        0 & -1 & 0 \\
        0 & 0 & -1
    \end{pmatrix}
    $$

    We now consider an arbitrary $g\in SO(3)$ and resulting vector $g_*v_o:=(x,y,z)^T$, and proceed to solve the system which arises as in Lemma \ref{ITS to STS lem 1}. Here, it takes the form

    $$
    \begin{amatrix}{4}
    -1 & -1 & 1 & x & 0 \\
    1 & -1 & -1 & y & 0\\
    -1 & 1 & -1 & z & 0 \\
    1 & 1 & 1 & 1 & 1 \\
    \end{amatrix}
    $$

    After row-reducing to the identity submatrix (here there are no rows of zeros), we have

    $$
    \begin{amatrix}{4}
    1 & 0 & 0 & -\frac{x}{2}-\frac{z}{2} & 0 \\
    0 & 1 & 0 & -\frac{x}{2}-\frac{y}{2} & 0 \\
    0 & 0 & 1 & -\frac{y}{2}-\frac{z}{2} & 0 \\
    1 & 1 & 1 & 1 & 1 \\
    \end{amatrix}
    $$

    \noindent whereupon after eliminating the $1$s at the bottom of the first three columns we obtain

    $$
    \begin{amatrix}{4}
    1 & 0 & 0 & -\frac{x}{2}-\frac{z}{2} & 0 \\
    0 & 1 & 0 & -\frac{x}{2}-\frac{y}{2} & 0 \\
    0 & 0 & 1 & -\frac{y}{2}-\frac{z}{2} & 0 \\
    0 & 0 & 0 & 1 +x+y+z & 1 \\
    \end{amatrix}
    $$

    Here, the system is solvable, provided (as laid out in Lemma \ref{ITS to STS lem 1}):
    $$1 - (-1)x - (-1)y - (-1)z \neq 0$$
    \noindent where $\ora{\alpha} = (-1,-1,-1)^T$ is independent of $g_*v_o=(x,y,z)^T$. Indeed, in this instance we can take $g=id:\rl^3 \ra \rl^3$, the identity map, so that $g_*v_o=v_o$ and then $1 + 1 + 1+ 1 \neq 0$. Thus, the system is indeed solvable (as proven indirectly in the lemma) in this case.
\end{ex}

Returning to the proof of Theorem \ref{ITS to STS}, so far, we know the following (even before knowing the exact nature of the kernel and image).

$$\text{dim}(T_xS)^{G_x} + \text{dim}((T_xS)^{G_x})^{\perp} = \text{dim}T_xS = \text{dim(ker}(\varphi)) + \text{dim(im}(\varphi))$$

Based on the above lemma, we can then infer

$$\text{dim}(T_xS)^{G_x} - \text{dim(im}(\varphi))  = \text{dim(ker}(\varphi)) - \text{dim}((T_xS)^{G_x})^{\perp} \ge 0$$

Hence, if we can show $0\ge \text{dim}(T_xS)^{G_x} - \text{dim(im}(\varphi))$ then it follows that $\text{ker}(\varphi) = ((T_xS)^{G_x})^{\perp}$, as desired. In light of that fact that any $v\in T_xS$ has a unique decomposition of the form $v=v_f + v_o$ for $v_f\in (T_xS)^{G_x}$ and $v_o\in ((T_xS)^{G_x})^{\perp}$ and that $\varphi$ is a homomorphism, the following lemma achieves precisely this result.

\begin{lem}\label{ITS to STS lem 2}
    The map $\varphi:T_xS \ra F/R$ restricted to $(T_xS)^{G_x}$ is injective.
\end{lem}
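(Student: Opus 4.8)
The plan is to prove injectivity of $\varphi|_{(T_xS)^{G_x}}$ by exhibiting an explicit linear left inverse built from the average map $\Pi$ of Proposition \ref{avg}. Since $\varphi$ is already a homomorphism, it is enough to produce a linear map $\Psi : F/R \to (T_xS)^{G_x}$ with $\Psi\circ\varphi|_{(T_xS)^{G_x}} = \mathrm{id}$, as this forces $\ker\big(\varphi|_{(T_xS)^{G_x}}\big) = \{0\}$.

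First I would descend $\Pi$ to the quotient. By Proposition \ref{avg} the map $\Pi : T_xS \to T_xS$ is linear with image $(T_xS)^{G_x}$, and by the remark after Proposition \ref{avg} it satisfies $\Pi(g\cdot v) = \Pi(v)$ for every $g\in G_x$; hence $\Pi$ factors through $\pi$ as $\Pi = \ol{\Pi}\circ\pi$ for a set map $\ol{\Pi} : T_xS/G_x \to (T_xS)^{G_x}$. This $\ol{\Pi}$ is smooth for the quotient diffeology: any plot of $T_xS/G_x$ is locally of the form $\pi\circ p$ with $p$ a plot of $T_xS$, and then $\ol{\Pi}\circ(\pi\circ p) = \Pi\circ p$ is smooth because $\Pi$ is linear, so the locality axiom of a diffeology yields smoothness globally. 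Next, using the chain-rule bookkeeping already seen in Examples \ref{ex man} and \ref{ex r1} and in the proof of Theorem \ref{main result}, I would let $\ol{\Pi}$ induce the linear map $\Psi : F/R \to (T_xS)^{G_x}$ (recall $(T_xS)^{G_x}$ is a vector space, so $T_0$ of it is itself by Example \ref{ex man}), defined on the summand indexed by a pointed plot $\ol p$ by $\Psi(\ol p, v) = d(\ol{\Pi}\circ\ol p)_0(v)$. The point requiring care is well-definedness on $F/R$, namely that $\Psi$ annihilates every basic relation: if $\ol p = \ol q\circ h$ as germs at $0$, then $\ol{\Pi}\circ\ol p = (\ol{\Pi}\circ\ol q)\circ h$, so the chain rule gives $\Psi(\ol p, v) = d(\ol{\Pi}\circ\ol q)_0\big(h_*(v)\big) = \Psi(\ol q, h_*(v))$, hence $\Psi$ kills the generators of $R$.

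Finally I would compute the composite on a fixed vector: for $v_f\in(T_xS)^{G_x}$ we have $\varphi(v_f) = (\pi, v_f)$, so $\Psi(\varphi(v_f)) = d(\ol{\Pi}\circ\pi)_0(v_f) = d\Pi_0(v_f) = \Pi(v_f) = v_f$, where the last two equalities use that $\Pi$ is linear (its differential at $0$ is $\Pi$ itself) and that $\Pi$ restricts to the identity on $(T_xS)^{G_x}$ by Proposition \ref{avg}. Thus $\Psi\circ\varphi$ is the identity on $(T_xS)^{G_x}$, so $\varphi|_{(T_xS)^{G_x}}$ is injective. I expect the only real obstacle to be bookkeeping rather than ideas: one must check that $\Psi$ annihilates all of $R$, not merely the basic relations supported on the single plot $\pi$, and that $\ol{\Pi}$ genuinely lands in the quotient smooth structure (the universal property of the quotient diffeology). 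Should that route feel cumbersome, an alternative is to pair $F/R$ against linear functionals $\ell_f(\ol p, v) = d(f\circ\ol p)_0(v)$ coming from $G_x$-invariant smooth functions $f$ on $T_xS/G_x$ obtained by averaging an ordinary linear functional with $\lambda(v_f)\neq 0$; since $\ell_f(\varphi(v_f)) = \lambda(v_f)\neq 0$, this separates any nonzero $v_f$ from $0$ equally well.
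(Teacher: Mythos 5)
Your proof is correct, and it takes a genuinely different route from the paper's. The paper argues directly with the relation space: it assumes $(\pi,v_f)\in R$, expands the resulting finite sum of basic relations, regroups the terms by plot, and uses the averaging map $\Pi$ together with the technical Lemma \ref{ITS to STS lem tech} to cancel every grouping and conclude $v_f=0$. You instead package all of that bookkeeping into an explicit left inverse: the invariance $\Pi(g\cdot v)=\Pi(v)$ lets $\Pi$ descend to a diffeologically smooth map $\ol{\Pi}:T_xS/G_x\ra (T_xS)^{G_x}$ (your smoothness check via local factorization through $\pi$ and locality is right), and the induced pushforward $\Psi$ on $F/R$ is well defined because $R$ is by definition the span of the basic relations and the ordinary chain rule shows $\Psi$ kills each of them; then $\Psi\circ\varphi=\Pi$, which is the identity on $(T_xS)^{G_x}$. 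This is essentially the functoriality of the internal tangent space applied to $\ol{\Pi}$, and your well-definedness check is the exact analogue of the paper's Lemma \ref{ITS to STS lem tech}: your identity $d(\ol{\Pi}\circ\ol{p})_0=d(\ol{\Pi}\circ\ol{q})_0\circ h_*$ is its item (3), and $d(\ol{\Pi}\circ\pi)_0=\Pi$ combines its items (1) and (2). What your route buys is brevity and a structural bonus, namely that $\varphi|_{(T_xS)^{G_x}}$ is a split injection with $\Psi$ visibly the inverse of the eventual isomorphism; the cost is having to verify that $\ol{\Pi}$ respects the quotient diffeology, which the paper's term-by-term cancellation avoids at the price of length. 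Your closing alternative, pairing against the functionals $v\mapsto\lambda(\Pi(v))$ obtained by averaging linear functionals, is the scalar-valued version of the same idea and also works.
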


Before proving this result, we need another technical lemma. We reintroduce some concepts before stating it. First, recall the average map $\Pi$ introduced before and within Proposition \ref{avg}. It will be necessary below to know the nature of its differential (at $0$).

Secondly, recall that we are of conceiving the internal tangent space $T_{[0]}(T_xS/G_x)$ as $F/R$ where $F=\oplus_{\ol{p}}T_0(U_{\ol{p}})$ is the sum of tangent spaces at $0$ in the domains of all pointed plots $\ol{p}:U_{\ol{p}} \ra T_xS/G_x$ sending $0$ to $[0]$ in the quotient diffeology on $T_xS/G_x$ (arising from the standard diffeology on $T_xS$) and $R$ is generated by the basic relations $(\ol{p},v) - (\ol{q}, h_*(v))$ where $h$ represents a germ of smooth maps with $\ol{p} = \ol{q} \circ h$ as germs at $0$.

In particular, for any pointed plot $\ol{p} : U_{\ol{p}} \ra T_xS/G_x$ we have (because the diffeology here is the quotient diffeology) that $\ol{p}=\pi \circ p$ as germs for $\pi$ the projection from $T_xS$ to $T_xS/G_x$ and $p:U_{\ol{p}} \ra T_xS$ a smooth map in the usual sense. In diagram form:

$$
\xymatrix{
U_{\overline{p}} \ar[rr]^{p} \ar[dr]_{\overline{p}} & & T_xS \ar[dl]^{\pi} \\
& T_xS/G_x &
}
$$

However, it is possible that $p$ is not unique. That is, we could also have $\ol{p}=\pi \circ p'$ as germs for $p'$ a smooth map distinct from $p$. Nonetheless, a statement related to the differentials at zero can still be made. 

\begin{lem}\label{ITS to STS lem tech}
    Let $\Pi$ be the average map introduced in and before Proposition \ref{avg}, with domain $T_xS$ (equivalently, the summand of $F$ corresponding to plot $\pi:T_xS \ra T_xS / G_x$).
    
    Also consider $\ol{p}:U_{\ol{p}} \ra T_xS/G_x$ an arbitrary pointed plot in the quotient diffeology of $T_xS/G_x$ and suppose that $\ol{p} = \pi \circ p = \pi \circ p'$ as germs at $0$, for $p, p'$ smooth maps $U_{\ol{p}} \ra T_xS$.
    
    Lastly suppose we have two pointed plots $\ol{p}, \ol{q}$ such that $\ol{p}=\pi \circ p$, $\ol{q}=\pi \circ q$ as germs at $0$ for $p,q$ appropriate smooth maps, and $\ol{p} = \ol{q} \circ h$ as germs at $0$ for $h$ the germ of a smooth map; this means that $(\ol{p},v)-(\ol{q},h_*v) \in R$ for $v\in T_0(U_{\ol{p}})$.
    
    Here and below, let $_*$ be shorthand for $_{*,0}$ the differential of a map at $0$. We have the following three facts:
    \begin{enumerate}
        \item The map $\Pi$ is its own differential at $0$. That is, $\Pi = \Pi_*$.
        \item From $\ol{p} = \pi \circ p = \pi \circ p'$, we have $\Pi_* \circ p_* = \Pi_* \circ p'_*$.
        \item From $\ol{p} = \ol{q} \circ h$, we have $\Pi_*\circ p_*=\Pi_* \circ q_* \circ h_*$
    \end{enumerate}
    In effect, related plots have equal differentials after composing with $\Pi$. The second fact, basically a well-definedness statement, ensures that the third is meaningful.
\end{lem}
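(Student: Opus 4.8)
The plan is to dispatch the three facts in the order stated, with (1) purely formal, (2) the substantive step, and (3) an immediate consequence of (2). Before any of this I would record one small normalization: since $\ol{p}(0)=[0]$ and $\ol{p}=\pi\circ p$, we have $p(0)\in\pi^{-1}([0])=G_x\cdot 0$, and as the tangent action of $G_x$ on $T_xS$ is linear we have $G_x\cdot 0=\{0\}$; hence $p(0)=0$ (and likewise $p'(0)=0$, $q(h(0))=0$). So all differentials below are taken at points mapping to $0$ under the identification of $T_0(T_xS)$ with $T_xS$.

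For fact (1), recall from Proposition \ref{avg} that $\Pi$ is a linear endomorphism of $T_xS$. Under the identification $T_0(T_xS)=T_xS$, the differential at $0$ of a linear map is that same linear map, so $\Pi_*=\Pi$. Nothing more is needed here.

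For fact (2), the key point is that $\ol{p}=\pi\circ p=\pi\circ p'$ as germs at $0$ says precisely that, on some neighborhood of $0$, $p(t)$ and $p'(t)$ lie in the same $G_x$-orbit. Choosing for each such $t$ — with no regard to continuity in $t$ — an element $g_t\in G_x$ with $p'(t)=g_t\cdot p(t)$, and invoking the remark following Proposition \ref{avg} that $\Pi(g'\cdot v)=\Pi(v)$ for every $g'\in G_x$, we get $\Pi(p'(t))=\Pi(p(t))$ throughout that neighborhood. Thus $\Pi\circ p$ and $\Pi\circ p'$ agree as germs at $0$; both are smooth in the ordinary sense, since $\Pi$ is linear (hence $C^\infty$) and $p,p'$ are smooth. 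Taking differentials at $0$, equality of germs gives equality of differentials, and the chain rule together with fact (1) ($\Pi_*=\Pi$, applied at the point $p(0)=p'(0)=0$) yields $\Pi_*\circ p_*=\Pi_*\circ p'_*$. The only place any care is required is the observation that we never need the selection $t\mapsto g_t$ to be continuous or smooth; pointwise $G_x$-invariance of $\Pi$ suffices. This is the main (and quite mild) obstacle in the whole lemma.

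For fact (3), from $\ol{p}=\ol{q}\circ h$ and $\ol{q}=\pi\circ q$ as germs at $0$ we obtain $\pi\circ p=\ol{p}=\pi\circ(q\circ h)$ as germs at $0$, where $q\circ h:U_{\ol{p}}\ra T_xS$ is smooth. So $p$ and $q\circ h$ are two smooth lifts of the same germ $\ol{p}$, and fact (2) — with $p'$ there taken to be $q\circ h$ — gives $\Pi_*\circ p_*=\Pi_*\circ(q\circ h)_*$. One final application of the chain rule, $(q\circ h)_*=q_*\circ h_*$, completes the proof.
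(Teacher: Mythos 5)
Your proposal is correct and follows essentially the same route as the paper: fact (1) from linearity of $\Pi$, fact (2) from the pointwise (not necessarily continuous) choice of $g_t\in G_x$ together with the $G_x$-invariance $\Pi(g\cdot v)=\Pi(v)$ and the chain rule, and fact (3) by the same mechanism. Your only departures are cosmetic improvements --- deducing (3) from (2) applied to the lift $q\circ h$ rather than re-running the argument, and explicitly noting $p(0)=0$ at the outset --- neither of which changes the substance.
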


\begin{proof}
    The first item is immediate. Indeed, because $\Pi$ is a linear map on a finite dimensional vector space $T_xS$ it can be identified with its differential $\Pi_*$.

    For the second item, suppose we have $\ol{p}=\pi \circ p = \pi \circ p'$ as germs about $0$ as described. Then for any point $w$ in a neighborhood of $0$ we have

    $$p(w) = g_w \cdot p'(w)$$

    \noindent for $g_w$ an element of $G_x$ (which need not vary smoothly with $w$). Applying $\Pi$ to both sides, we have from the remark following Proposition \ref{avg} that
    
    $$\Pi(p(w)) = \Pi(g_w\cdot p'(w))=\Pi(p'(w))$$

    As this holds for all $w$ in a neighborhood of zero, by the chain rule $\Pi_* \circ p_* = \Pi_* \circ p'_* $.

    For the third item, suppose we have $\ol{p}=\pi \circ p$, $\ol{q}=\pi \circ q$, and $\ol{p} = \ol{q} \circ h$ as germs about $0$ as described. Then the last equality implies that in a neighborhood of $0$ we have

    $$\pi \circ p = \pi \circ q \circ h$$

    \noindent meaning that for any $w$ in a neighborhood of $0$ it is the case that

    $$p(w) = g_w \cdot q(h(w))$$

    Again, $g_w\in G_x$ and can vary non-smoothly with $w$. We apply $\Pi$ as done above and obtain $\Pi(p(w)) = \Pi(q(h(w)))$ for all $w$ in a neighborhood of $0$. Thus, we have by the chain rule that $\Pi_* \circ p_* = \Pi_* \circ q_* \circ h_*$.
\end{proof}

We now return to the proof of Lemma \ref{ITS to STS lem 2} pursuant to the proof of Theorem \ref{ITS to STS}.

\begin{proof}[Proof of Lemma \ref{ITS to STS lem 2}]
    We continue to think of the internal tangent space $T_{[0]}(T_xS/G_x)$ as the quotient $F/R$. Suppose $v_f\in (T_xS)^{G_x}$ and $v_f\in \text{ker}(\varphi)$, meaning that $\ol{\pi}(\iota(v_f))=0$ for $\iota : T_xS \ra F$ the inclusion map and $\ol{\pi} : F \ra F/R$ the projection map. This implies that $\iota(v_f)\in R$, meaning it can be written as a finite sum

    \begin{align}\label{ITS to STS lem 2 eq}
        \iota(v_f)=(\pi,v_f)=\sum_i c_i[(\ol{p_i},v_i)-(\ol{q_i},h^i_*v_i)]
    \end{align}

    \noindent where $\pi : T_xS \ra T_xS/G_x$, $c_i\in \rl$, $v_i\in T_0(U_{\ol{p_i}})$, the indexes $\ol{p_i},\ol{q_i}$ are pointed plots, and each $h^i$ is an arbitrary germ of smooth maps such that $\ol{p_i}=\ol{q_i}\circ h^i$ as germs about $0$. Our goal is to show that $v_f=0$, for which it is sufficient to show that $\iota(v_f)=(\pi, v_f)=(\pi,0)$.

    To do this, we begin with some observations drawn from equality \ref{ITS to STS lem 2 eq}. First, note that because $v_f\in (T_xS)^{G_x}$ we have $\Pi(v_f)=v_f$, by Proposition \ref{avg}. Now, because equality \ref{ITS to STS lem 2 eq} holds in the space $F$ before modding out by $R$, we can assert two useful facts:
    \begin{enumerate}
        \item (First Observation) First, although the right-hand side of the equation contains elements from possibly many plot indices of the direct sum $F$, because the left-hand side contains only the index $\pi$, any terms on the right-hand side indexed by a plot other than $\pi$ must cancel out completely with other elements on the right-hand side (indexed by the same plot) or be zero.\label{observation 1}
        \item (Second Observation) Second, because $\Pi(v_f)=v_f$, any term indexed by $\pi$ on the right hand side, that is, a term of the form $(\pi, v_i)$ or $(\pi,h^i_*v_i)$, can be replaced by the term $(\pi,\Pi(v_i))$ or $(\pi,\Pi(h^i_*v_i))$, respectively. This is because the left-hand side consists solely of a term in the image of $\Pi$, so any terms or components on the right hand side not in this image must cancel completely with other elements on the right hand side or be zero.\label{observation 2}
    \end{enumerate}

    As shorthand, we will refer to the above two notions as the First and Second Observations (derived from equation \ref{ITS to STS lem 2 eq}). Now, recall that for any pointed plot $\ol{p_i}$ or $\ol{q_i}$ in the quotient diffeology we have $\ol{p_i}=\pi \circ p_i$ or $\ol{q_i}=\pi \circ q_i$ for $p_i$ or $q_i$ a smooth map into $T_xS$ in the usual sense.
    
    With this in mind, we expand the expression from equation \ref{ITS to STS lem 2 eq} to the below:
    
    \begin{align*}
        & \quad \sum_i c_i[(\ol{p_i},v_i)-(\ol{q_i},h^i_*v_i)] \\
        &= \sum_i c_i[(\ol{p_i},v_i)-(\pi,p_{i*}v_i)+(\pi,p_{i*}v_i)-(\pi,q_{i*}h^i_*v_i)+(\pi,q_{i*}h^i_*v_i)-(\ol{q_i},h^i_*v_i)]
    \end{align*}
    
    Now, rewrite the expanded expression as

    \begin{align}\label{regrouped 1}
    \sum_i c_i[(\pi,p_{i*}v_i)-(\pi,q_{i*}h^i_*v_i)] + \sum_i c_i[(\ol{p_i},v_i)-(\pi,p_{i*}v_i)+(\pi,q_{i*}h^i_*v_i)-(\ol{q_i},h^i_*v_i)]
    \end{align}

    From here, group related pairs in the second sum above not by index, but rather by plot. That is, whenever $\ol{p_i}$ agrees with some other $\ol{p_j}$ or $\ol{q_k}$ (and likewise for plots $\ol{q_i}$), group all such pairs $(\ol{p_i},v_i)-(\pi,p_{i*}v_i)$ or $(\pi,q_{i*}h^i_*v_i)-(\ol{q_i},h^i_*v_i)$ together. With this organization, where we label groups of the same plot using only $\ol{p}$'s (no longer $\ol{q}$'s), expression \ref{regrouped 1} becomes:

    \begin{align*}
        (\pi, v_f) &= \sum_i c_i[(\pi,p_{i*}v_i)-(\pi,q_{i*}h^i_*v_i)] \\
        &+ \sum_{j=1}^{m_1}c^1_j[(\ol{p_1},v^1_j)-(\pi,p_{1*}v^1_j)] + \sum_{k=1}^{n_1} c^1_k[(\pi,p_{1*}(h^{1,k})_*v^1_k)-(\ol{p_1},(h^{1,k})_*v^1_k)] \\
        & \vdots \\
        &+ \sum_{j=1}^{m_N} c^N_j[(\ol{p_N},v^N_j)-(\pi,p_{N*}v^N_j)] + \sum_{k=1}^{n_N} c^N_k[(\pi,p_{N*}(h^{N,k})_*v^N_k)-(\ol{p_N},(h^{N,k})_*v^N_k)]
    \end{align*}

    \noindent where it is possible that $m_i$ or $n_i$ may be zero. We now proceed to show the above collection of sums equals zero. First, note that because $\Pi(v_f) = v_f$, by our Second Observation \ref{observation 2} from the beginning of the proof all terms in our grouped expression indexed by the plot $\pi$ must lie in the image of $\Pi$. Therefore, we may replace the above with:

    \begin{align*}
        & \quad (\pi, v_f) \\
        &= (\pi, \Pi(v_f)) \\
        &= \sum_i c_i[(\pi,\Pi(p_{i*}v_i))-(\pi,\Pi(q_{i*}h^i_*v_i))] \\
        &+ \sum_{j=1}^{m_1} c^1_j[(\ol{p_1},v^1_j)-(\pi,\Pi(p_{1*}v^1_j))] + \sum_{k=1}^{n_1} c^1_k[(\pi,\Pi(p_{1*}(h^{1,k})_*v^1_k))-(\ol{p_1},(h^{1,k})_*v^1_k)] \\
        & \vdots \\
        &+ \sum_{j=1}^{m_N} c^N_j[(\ol{p_N},v^N_j)-(\pi,\Pi(p_{N*}v^N_j))] + \sum_{k=1}^{n_N} c^N_k[(\pi,\Pi(p_{N*}(h^{N,k})_*v^N_k))-(\ol{p_N},(h^{N,k})_*v^N_k)]
    \end{align*}

    Observe that the first and third parts of Lemma \ref{ITS to STS lem tech} imply

    $$\sum_i c_i[(\pi,\Pi(p_{i*}v_i))-(\pi,\Pi(q_{i*}h^i_*v_i))]=\sum c_i[(\pi,\Pi_*p_{i*}v_i)-(\pi,\Pi_*p_{i*}v_i)]=0$$

    \noindent and so this entire sum (the first part of our extended expression) can be eliminated from the expression for $(\pi,v_f)$. This leaves us with

    \begin{align*}
        & \quad (\pi, v_f) \\
        &= (\pi, \Pi(v_f)) \\
        &= \sum_{j=1}^{m_1} c^1_j[(\ol{p_1},v^1_j)-(\pi,\Pi(p_{1*}v^1_j))] + \sum_{k=1}^{n_1} c^1_k[(\pi,\Pi(p_{1*}(h^{1,k})_*v^1_k))-(\ol{p_1},(h^{1,k})_*v^1_k)] \\
        & \vdots \\
        &+ \sum_{j=1}^{m_N} c^N_j[(\ol{p_N},v^N_j)-(\pi,\Pi(p_{N*}v^N_j))] + \sum_{k=1}^{n_N} c^N_k[(\pi,\Pi(p_{N*}(h^{N,k})_*v^N_k))-(\ol{p_N},(h^{N,k})_*v^N_k)]
    \end{align*}
    
    Now, observe that if $\ol{p_\ell} = \pi$ for one of our $N$ remaining groupings, the grouping takes the form:

    \begin{align}\label{regrouped 2}
    \sum_{j=1}^{m_\ell} c^\ell_j[(\pi,\Pi(v^\ell_j))-(\pi,\Pi(p_{\ell*}v^\ell_j))] + \sum_{k=1}^{n_\ell} c^\ell_k[(\pi,\Pi(p_{\ell*}(h^{\ell,k})_*v^\ell_k))-(\pi,\Pi((h^{\ell,k})_*v^\ell_k))]
    \end{align}

    \noindent where $\pi = \ol{p_\ell} = \pi \circ p_\ell$. However, this can be further simplified in light of Lemma \ref{ITS to STS lem tech}. Indeed, because we also have $\ol{p_\ell} = \pi = \pi \circ id$, where $id:T_xS \ra T_xS$ denotes the identity map, it follows from the first and second parts of the lemma that $\Pi p_{\ell*} = \Pi_*p_{\ell*} = \Pi_*id_*=\Pi_*$. Therefore, since Lemma \ref{ITS to STS lem tech} also gives that $\Pi=\Pi_*$, expression \ref{regrouped 2} becomes

    $$\sum_{j=1}^{m_\ell} c^\ell_j[(\pi,\Pi_*v^\ell_j)-(\pi,\Pi_*v^\ell_j)] + \sum_{k=1}^{n_\ell} c^\ell_k[(\pi,\Pi_*(h^{\ell,k})_*v^\ell_k)-(\pi,\Pi_*(h^{\ell,k})_*v^\ell_k)] = 0$$

    Therefore, such a grouping can be eliminated. We have so far shown that for any $v_f\in (T_xS^{G_x})$, $(\pi,v_f)$ can be expressed as

    \begin{align*}
        & \quad (\pi, v_f) \\
        &= (\pi, \Pi(v_f)) \\
        &= \sum_{j=1}^{m_1} c^1_j[(\ol{p_1},v^1_j)-(\pi,\Pi(p_{1*}v^1_j))] + \sum_{k=1}^{n_1} c^1_k[(\pi,\Pi(p_{1*}(h^{1,k})_*v^1_k))-(\ol{p_1},(h^{1,k})_*v^1_k)] \\
        & \vdots \\
        &+ \sum_{j=1}^{m_N} c^N_j[(\ol{p_N},v^N_j)-(\pi,\Pi(p_{N*}v^N_j))] + \sum_{k=1}^{n_N} c^N_k[(\pi,\Pi(p_{N*}(h^{N,k})_*v^N_k))-(\ol{p_N},(h^{N,k})_*v^N_k)]
    \end{align*}

    \noindent where $\ol{p_\ell}\neq \pi$. We now show that these remaining terms can also be eliminated. Indeed, for such a grouping associated to a plot $\ol{p_\ell}$, namely

    \begin{align*}
        & \quad \sum_{j=1}^{m_\ell} c^\ell_j[(\ol{p_\ell},v^\ell_j)-(\pi,\Pi(p_{\ell*}v^\ell_j))] + \sum_{k=1}^{n_\ell} c^\ell_k[(\pi,\Pi(p_{\ell*}(h^{\ell,k})_*v^\ell_k))-(\ol{p_\ell},(h^{\ell,k})_*v^\ell_k)] \\
        &= \left(\ol{p_\ell},\sum_{j=1}^{m_\ell} c^\ell_jv^\ell_j - \sum_{k=1}^{n_\ell}c^\ell_k(h^{\ell,k})_*v^\ell_k\right) + \left(\pi,-\sum_{j=1}^{m_\ell} c^\ell_j\Pi(p_{\ell*}v^\ell_j) + \sum_{k=1}^{n_\ell}c^\ell_k\Pi(p_{\ell*}(h^{\ell,k})_*v^\ell_k)\right)
    \end{align*}
    
    \noindent we must have that the first term in the second line above satisfies

    \begin{align}\label{regrouped 3}
    \sum_{j=1}^{m_\ell} c^\ell_j(\ol{p_\ell},v^\ell_j) - \sum_{k=1}^{n_\ell} c^\ell_k(\ol{p_\ell},(h^{\ell,k})_*v^\ell_k) = \left(\ol{p_\ell},\sum_{j=1}^{m_\ell} c^\ell_jv^\ell_j - \sum_{k=1}^{n_\ell}c^\ell_k(h^{\ell,k})_*v^\ell_k\right) = 0
    \end{align}

    \noindent because $\ol{p_\ell}\neq \pi$ and, as noted in our First Observation \ref{observation 1} at the start of this proof, such terms must cancel completely (and only with other terms indexed by the same plot $\ol{p_\ell}$). Now, as all $v^\ell_j$ and $(h^{\ell,k})_*v^\ell_k$ lie in $T_0(U_{\ol{p_\ell}})$ for all $j,k$, we can apply the differential $\Pi_*p_{\ell*}$ to this collection of vectors to observe that

    \begin{align}
    0 &= -1\cdot\Pi_*p_{\ell*}\left[\sum_{j=1}^{m_\ell} c^\ell_jv^\ell_j - \sum_{k=1}^{n_\ell}c^\ell_k(h^{\ell,k})_*v^\ell_k\right] \\
    &= -\sum_{j=1}^{m_\ell}c^\ell_j\Pi_*p_{\ell*}v^\ell_j + \sum_{k=1}^{n_\ell}c^\ell_k\Pi_*p_{\ell*}(h^{\ell,k})_*v^\ell_k \label{regrouped 4}
    \end{align}

    From the above equations \ref{regrouped 3} and \ref{regrouped 4}, along with the first part of Lemma \ref{ITS to STS lem tech}, it follows that

    \begin{align*}
        & \quad \sum_{j=1}^{m_\ell} c^\ell_j[(\ol{p_\ell},v^\ell_j)-(\pi,\Pi(p_{\ell*}v^\ell_j))] + \sum_{k=1}^{n_\ell} c^\ell_k[(\pi,\Pi(p_{\ell*}(h^{\ell,k})_*v^\ell_k))-(\ol{p_\ell},(h^{\ell,k})_*v^\ell_k)] \\
        &= \left(\ol{p_\ell},\sum_{j=1}^{m_\ell} c^\ell_jv^\ell_j - \sum_{k=1}^{n_\ell}c^\ell_k(h^{\ell,k})_*v^\ell_k\right) + \left(\pi,-\sum_{j=1}^{m_\ell} c^\ell_j\Pi(p_{\ell*}v^\ell_j) + \sum_{k=1}^{n_\ell}c^\ell_k\Pi(p_{\ell*}(h^{\ell,k})_*v^\ell_k)\right) \\
        &=0
    \end{align*}

    \noindent as desired. Hence each remaining term in the expression for $(\pi,v_f)$ can be eliminated. This means that overall

    \begin{align*}
        & \quad (\pi, v_f) \\
        &= (\pi, \Pi(v_f)) \\
        &= \sum_{j=1}^{m_1} c^1_j[(\ol{p_1},v^1_j)-(\pi,\Pi(p_{1*}v^1_j))] + \sum_{k=1}^{n_1} c^1_k[(\pi,\Pi(p_{1*}(h^{1,k})_*v^1_k))-(\ol{p_1},(h^{1,k})_*v^1_k)] \\
        & \vdots \\
        &+ \sum_{j=1}^{m_N} c^N_j[(\ol{p_N},v^N_j)-(\pi,\Pi(p_{N*}v^N_j))] + \sum_{k=1}^{n_N} c^N_k[(\pi,\Pi(p_{N*}(h^{N,k})_*v^N_k))-(\ol{p_N},(h^{N,k})_*v^N_k)] \\
        &= 0
    \end{align*}
    
    We have shown that for any $v_f\in (T_xS)^{G_x}$ also in $\text{ker}(\varphi) = \text{ker}(\iota \circ \ol{\pi})$, it is the case that

    $$\iota(v_f)=(\pi,v_f)=\sum_i c_i[(\ol{p_i},v_i)-(\ol{q_i},h^i_*v_i)]=0$$

    \noindent in $F$. This implies that $v_f = 0$. Thus, the map $\varphi$ is injective when restricted to $(T_xS)^{G_x}$.
\end{proof}

As described above, collectively Lemmas \ref{ITS to STS lem 1} and \ref{ITS to STS lem 2} show that $\text{ker}(\varphi) = ((T_xS)^{G_x})^{\perp}$. Indeed, combined they show:

$$0 \ge \text{dim}(T_xS)^{G_x} - \text{dim(im}(\varphi))  = \text{dim(ker}(\varphi)) - \text{dim}((T_xS)^{G_x})^{\perp} \ge 0$$

\noindent with the first inequality coming from Lemma \ref{ITS to STS lem 2} and the second inequality from Lemma \ref{ITS to STS lem 1}.

Therefore, by the first isomorphism theorem $(T_xS)^{G_x} \cong F/R$, where $F/R$ represents $T_{[0]}(T_xS/G_x)$. Combined with Theorem \ref{main result cor}, Theorem \ref{ITS to STS} is attained. That is, the internal tangent space to a point $[x]\in M/G$ (thought of as a diffeological space with the quotient diffeology) is isomorphic to the stratified tangent space to $[x] \in M/G$ (thought of as a stratified space with the orbit-type stratification).

This theorem is the culminating result of this work. It provides a link between the internal tangent spaces studied here and in \cite{tan_space_and_bundle} and \cite{hector} to the well understood stratified tangent space of an orbit space.

\section[Examples of Stratified Tangent Spaces]{\textbf{V.2. Examples of Stratified Tangent Spaces}}\label{sec_strat_exs}

In light of Theorem \ref{ITS to STS}, we now determine the stratified tangent spaces to points of the orbit spaces studied above: $\rl / O(1)$ and $\rl^3 / SO(3)$. As expected in light of the above result, these stratified tangent spaces are isomorphic to the internal tangent spaces found already.

\begin{ex}
    Let $\rl / O(1)$ be the orbit space of $\rl$ under the action of $O(1)$ by scaling by $\pm 1$, as described when this space was initially introduced in Example \ref{ex r1}
    
    At the point $[0]\in \rl / O(1)$, the slice for the action in $\rl$ is the entire space $\rl$ and the stabilizer $O(1)_{0}=O(1)$ fixes only $0$ in the slice (here, the tangent action is simply the action itself restricted to elements in the stabilizer). Therefore the stratified tangent space to $[0]$ is a vector space of dimension $\text{dim}(T_xS)^{O(1)_0}=\text{dim}(\{0\})=0$. Thus, it is isomorphic to $\{0\}$.
    
    For a point $[x]\neq[0]$ in $\rl / O(1)$, the slice for the action can be taken to be any open interval containing the point $x$ but not $0$. As the stabilizer $O(1)_x$ consists only of the identity element, it fixes every point in the slice. Therefore the stratified tangent space to $[x]$ is a vector space of dimension $\text{dim}(T_xS)^{O(1)_x}=\text{dim}(T_xS)=1$. Thus, it is isomorphic to $\rl$.
\end{ex}

\begin{ex}
    Let $\rl^3 / SO(3)$ be the orbit space of $\rl^3$ under the action of $SO(3)$ by rotation, as described when this space was initially introduced in Example \ref{ex r3}
    
    At the point $[0]\in \rl^3 / SO(3)$, the slice for the action in $\rl^3$ is the entire space $\rl^3$ and the stabilizer $SO(3)_{0}=SO(3)$ fixes only $0$ in the slice (again, the tangent action is the simply the action itself restricted to elements in the stabilizer, as the action is by matrices). Therefore the stratified tangent space to $[0]$ is a vector space of dimension $\text{dim}(T_xS)^{SO(3)_0}=\text{dim}(\{0\})=0$. Thus, it is isomorphic to $\{0\}$.
    
    For a point $[x]\neq[0]$ in $\rl^3 / SO(3)$, the slice for the action can be taken to be any open interval on the unique line through both $x$ and $0$ in $\rl^3$ containing the point $x$ but not $0$. As the stabilizer $SO(3)_x$ consists only of the rotations with axis of rotation passing through $x$, and such rotations fix every point in the slice, we have $(T_xS)^{SO(3)_x}=T_xS$. Therefore the stratified tangent space to $[x]$ is a vector space of dimension $\text{dim}(T_xS)^{SO(3)_x}=\text{dim}(T_xS)=1$. Thus, it is isomorphic to $\rl$.
\end{ex}

Note that the stratified tangent spaces in both these examples were determined quickly from the slices and the tangent action at each point. This demonstrates the utility of linking the internal tangent space to a point to the stratified tangent space to said point.

\let\OLDthebibliography\thebibliography
\renewcommand\thebibliography[1]{
  \OLDthebibliography{#1}
  \setlength{\parskip}{0pt}
  \setlength{\itemsep}{0pt plus 0.3ex}
}

\startreferences
\nocite{*}
\bibliographystyle{amsplain}
\bibliography{general_final}

\end{document}